\numberwithin{equation}{section}
\newtheorem{thm}{Theorem}[section]
\newtheorem{lemma}[thm]{Lemma}
\newtheorem{cor}{Corollary}[thm]
\newtheorem{prop}[thm]{Proposition}
\newtheorem{conj}[thm]{Conjecture}
\newtheorem{ex}[thm]{Example}
\newtheorem{defin}[thm]{Definition}
\definecolor{darkblue}{rgb}{0.0,0,0.7}
\definecolor{darkred}{rgb}{0.68,0,0}
\definecolor{darkgreen}{rgb}{0,.38,0}
\def\<{\langle}
\def\>{\rangle}
\def\0{{\mathbf 0}}
\def\.{\hskip.06cm}
\def\msquare{\mathord{\scalerel*{\Box}{gX}}}
\title[Kronecker product of Schur functions of square shapes]{On the Kronecker product of Schur functions of square shapes}
\author{Chenchen Zhao}
\address[Zhao]{Department of Mathematics, University of Southern California, Los Angeles, CA 90089} 
\email{zhao109@usc.edu} 
\date{\today}
\begin{document}

\begin{abstract}
Motivated by the Saxl conjecture and the tensor square conjecture, which states that the tensor squares of certain irreducible representations of the symmetric group contain all irreducible representations, we study the tensor squares of irreducible representations associated with square Young diagrams. We give a formula for computing Kronecker coefficients, which are indexed by two square partitions and a three-row partition, specifically one with a short second row and the smallest part equal to 1. We also prove the positivity of square Kronecker coefficients for particular families of partitions, including three-row partitions and near-hooks.
\end{abstract}

\maketitle

\section{Introduction}
Given partitions $\lambda, \mu \vdash n$, we can decompose the internal product of Schur functions as \[s_\lambda \ast s_\mu = \sum_{\nu \vdash n} g(\lambda,\mu,\nu) s_\nu,\]where $g(\lambda,\mu,\nu)$ are the Kronecker coefficients. The Kronecker coefficients can also be interpreted as the multiplicities of an irreducible module of $S_n$ in the tensor product of irreducible modules of $S_n$ corresponding to $\lambda$ and $\mu$. Therefore, the Kronecker coefficients are certainly non-negative integers, which naturally suggests that there may be a combinatorial interpretation of the coefficients. The problem of finding a non-negative combinatorial interpretation for the Kronecker coefficients was explicitly stated by Stanley in 2000 (\cite{stanley2000positivity} Problem 10) as a major open problem in Algebraic Combinatorics. The Kronecker coefficients have recently gained prominence within the context of algebraic complexity theory, particularly in the realm of Geometric Complexity Theory (GCT). However, as addressed by Panova in \cite{panova2023complexity}, despite the increasing interest in the problem, little progress has been made: The Kronecker product problem is still poorly understood, and deriving an explicit combinatorial formula to solve the Kronecker product remains as an outstanding open problem in the field of Algebraic Combinatorics.

The number of irreducible representations of the symmetric group $S_n$ is equal to the number of conjugacy classes, which is the number of integer partitions of $n$. Given $\mu \vdash n$, let $\mathbb S^\mu$ denote the Specht module of the symmetric group $S_n$, indexed by partition $\mu$. It is worth noting that these Specht modules provide us with a way to study the irreducible representations, with each representation being uniquely indexed by an integer partition (see e.g. \cite{books/daglib/0077285}).

In \cite{HSTZ}, Heide, Saxl, Tiep, and Zalesski proved that with a few exceptions, every irreducible character of a simple group of Lie type is a constituent of the tensor square of the Steinberg character. They conjectured that for $n\ge 5$, there is an irreducible character $\chi$ of $A_n$, whose tensor square $\chi \otimes \chi$ contains every irreducible character as a constituent. The following is the symmetric group analog of this conjecture:
\begin{conj}[Tensor Square Conjecture]
For every $n$ except 2,4,9, there exists an irreducible representation $V$ of the symmetric group $S_n$ such that the tensor square $V \otimes V$ contains every irreducible representation of $S_n$ as a summand with positive multiplicity. In terms of the correspondence of partitions, there exists a partition $\lambda \vdash n$ such that the Kronecker coefficient $g(\lambda,\lambda,\mu)$ is positive for any $\mu \vdash n$. 
\end{conj}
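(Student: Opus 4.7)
The Tensor Square Conjecture is a well-known open problem, so any realistic plan would aim to establish it for large infinite families of $n$ rather than all $n$ simultaneously. My first step would be to fix a candidate partition for each $n$: for $n=k^2$, the square $\lambda=k^k$ studied in the present paper is the natural choice; for $n=\binom{k+1}{2}$, Saxl's choice $\rho_k=(k,k-1,\ldots,1)$; and for other $n$, a self-conjugate ``near-square'' or ``near-staircase'' shape obtained by adding or removing a single corner. After fixing $\lambda$, the task reduces to proving $g(\lambda,\lambda,\mu)>0$ for every $\mu\vdash n$.

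The three main tools I would deploy are: (a)~the semigroup property of the support of Kronecker coefficients, which lets one assemble positivity for a complicated $\mu$ from simpler building blocks written as $\mu=\mu^{(1)}+\cdots+\mu^{(r)}$; (b)~Murnaghan-type stability together with the reduced Kronecker coefficients, which reduces the problem to ``reduced'' triples once a sufficiently long first row of $\mu$ dominates; and (c)~character bounds of Roichman--Fomin--Lulov type, applied to the Frobenius formula $g(\lambda,\lambda,\mu)=\tfrac{1}{n!}\sum_{w\in S_n}\chi^\lambda(w)^2\chi^\mu(w)$, which yield explicit numerical lower bounds in which the contribution of the identity class dominates that of all other classes.

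I would then carry out a case analysis on the shape of $\mu$: hooks and near-hooks, where explicit formulas (e.g.\ via the Littlewood--Richardson rule and the hook--Kronecker identity) apply; few-row partitions (two- or three-row shapes), where specialized formulas exist and positivity reduces to combinatorial counts of lattice paths --- this is the regime covered by the present paper; partitions close to $\lambda$ in dominance order, where $g(\lambda,\lambda,\lambda)$ is known to be large and the semigroup property propagates positivity to a neighborhood of $\lambda$; and finally generic $\mu$ lying in the bulk, where the character bounds should suffice asymptotically. The main obstacle I anticipate is the bulk case for small or moderate $n$: the character bounds become effective only once $n$ is large relative to the shape data of $\lambda$ and $\mu$, and below that threshold there is no uniform combinatorial formula to fall back on, nor a reliable way to interpolate between the explicit families. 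Consequently, I would expect the realistic outcome of this plan to be positivity for the specific families $(\lambda,\lambda,\mu)$ admitting a convenient square, staircase, or hook-type structure --- matching, and perhaps modestly extending, the scope of what the present paper establishes --- while leaving the full conjecture open.
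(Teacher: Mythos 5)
This statement is a \emph{conjecture} in the paper, not a theorem: the paper offers no proof of it, and it remains a well-known open problem. Your proposal correctly recognizes this and is a research programme rather than a proof, so it cannot be judged ``correct'' as a proof of the statement; by your own admission its realistic outcome ``leaves the full conjecture open.'' That is the honest assessment, but it means there is no argument here to verify against the paper.

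Beyond that, one concrete step of your plan is contradicted by the paper itself. You propose taking $\lambda=\msquare_k=(k^k)$ as the witness partition when $n=k^2$. The central point of Section~\ref{section:missing} is that the square shape \emph{fails} the Tensor Square Conjecture: Corollary~\ref{zerocor} exhibits explicit partitions $\mu$ (e.g.\ $(m^2-1,1)$, $(m^2-3,2,1)$, $(m^2-4,3,1)$, and certain hooks, together with their conjugates) for which $g(\msquare_m,\msquare_m,\mu)=0$ for all $m\ge 7$. Indeed, any non-self-conjugate candidate such as a square with a corner added or removed fails immediately, since $g(\lambda,\lambda,1^n)=\langle\chi^\lambda\chi^\lambda,\chi^{1^n}\rangle>0$ forces $\lambda=\lambda'$; this is why Conjecture~\ref{conj:tensor_square_general} restricts attention to self-conjugate shapes of large Durfee size that are \emph{not} squares. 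So the candidate selection in your first step would have to be revised before any of the subsequent machinery (semigroup property, stability, character bounds) could be brought to bear, and even then those tools are only known to yield positivity for special families of $\mu$, exactly as in the paper.
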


In 2012, Jan Saxl conjectured that all irreducible representations of $S_n$ over $\mathbb{C}$ occur in the decomposition of the tensor square of irreducible representation corresponding to the staircase shape partition \cite{pak2013kronecker}. This conjecture is as follows:
\begin{conj}[Saxl Conjecture]
Let $\rho_m$ denote the staircase partition of size $n := \binom{m+1}{2}$. Then $g(\rho_m,\rho_m,\mu) > 0$ for every $\mu \vdash n.$
\end{conj}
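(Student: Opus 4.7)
The plan is to attack the conjecture via the character-sum formula
\[ g(\rho_m, \rho_m, \mu) = \frac{1}{n!} \sum_{\sigma \in S_n} \chi^{\rho_m}(\sigma)^2 \chi^\mu(\sigma), \]
combined with two structural reductions: the semigroup property of Kronecker coefficients and the special behaviour of $\chi^{\rho_m}$ under the Murnaghan--Nakayama rule. Because $\rho_m$ is self-conjugate one has $g(\rho_m,\rho_m,\mu)=g(\rho_m,\rho_m,\mu')$, so it suffices to treat $\mu$ in one half of the dominance lattice, say $\mu_1 \ge n/2$.

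First I would collect the base cases already in the literature: $\mu$ a hook, a two-row or two-column shape, a near-rectangle, or a shape obtained from a smaller staircase by attaching a rectangle. In each of these families positivity of $g(\rho_m,\rho_m,\mu)$ follows from an explicit character or tableau formula. Next I would try to bootstrap from these base cases by the semigroup property: whenever $\rho_m = \alpha + \beta$ and $\mu = \nu + \tau$ with $g(\alpha,\alpha,\nu) > 0$ and $g(\beta,\beta,\tau) > 0$, positivity of $g(\rho_m,\rho_m,\mu)$ follows. Since $\rho_m$ decomposes as $\rho_{m-1} + (1^m)$ and, more usefully, as a union of self-conjugate hooks, the resulting menu of splittings already covers a large swath of $\mu$.

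For $\mu$ not reachable by these means I would induct on $m$ via the $2$-quotient of the staircase. On conjugacy classes supported on $2$-hooks the Murnaghan--Nakayama rule factors $\chi^{\rho_m}(\sigma)$ through a pair of smaller staircase characters, a structurally very clean expression; paired with lower bounds on $|\chi^\mu(\sigma)|$ on classes with many fixed points, one hopes to extract a net-positive contribution to the character sum, using the inductive hypothesis to control the quotient pieces.

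The main obstacle --- and the reason the conjecture has been open for more than a decade --- is that neither tool suffices on its own. The semigroup method leaves an irreducible core of ``near-staircase'' $\mu$ untouched, while the character-sum method suffers from massive cancellation: the positive and negative contributions are of comparable order, so any honest proof must isolate a manifestly positive residue after cancellation. I expect the genuine bottleneck to be the construction of a new combinatorial device --- for instance a sign-reversing involution on pairs of domino tableaux of shape $\rho_m$, or a Schur-positive ``intermediate'' expansion of $s_{\rho_m} \ast s_{\rho_m}$ --- from which the value of $\mu$ could be read off. In the absence of such a breakthrough, the realistic fallback, adopted in the present paper with square shapes in place of staircases, is to enlarge incrementally the list of families of $\mu$ for which positivity can be verified by direct combinatorial formulas.
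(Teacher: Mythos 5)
You were asked about the Saxl Conjecture, which is an open problem; the paper does not prove it and does not claim to. It is quoted purely as motivation, and the paper's actual content concerns the analogue for square shapes $\msquare_m$, where the answer is in fact negative in general (Corollary \ref{zerocor} exhibits partitions missing from the tensor square). So there is no paper proof to compare against, and your proposal is not a proof either --- as you yourself say, the decisive step (a ``new combinatorial device'' isolating a positive residue after cancellation) is precisely the missing mathematics, not an implementation detail. A proof attempt whose final step is ``one hopes to extract a net-positive contribution'' has a gap coextensive with the conjecture itself.

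Beyond that global gap, two of your intermediate steps are flawed as stated. First, the semigroup property (Theorem \ref{semigroup}) applies only to horizontal and vertical sums, so decomposing $\rho_m$ ``as a union of self-conjugate hooks'' licenses nothing: the principal-hook decomposition of a staircase is not a horizontal or vertical sum. The one valid splitting you name, $\rho_m = \rho_{m-1} +_H (1^m)$, is far weaker than you suggest, because the partner triple must then be $g(1^m,1^m,\tau)$, which is positive only for $\tau=(m)$; hence this splitting reaches only those $\mu$ of the form $\nu +_H (m)$ with $g(\rho_{m-1},\rho_{m-1},\nu)>0$. (The paper's Section \ref{sec:final} makes the same point for squares: vanishing triples such as $g(m,m,1^m)=0$ choke off the obvious decompositions.) Second, the character route is not merely hampered by cancellation; the one tool of this kind that is actually available, Theorem \ref{character}, certifies positivity only for those $\mu$ with $\chi^{\mu}[\nu]\neq 0$ on the single class $\nu$ of principal hook lengths of $\rho_m$, and many $\mu$ have vanishing character on that class, so no refinement of this argument alone can close the conjecture. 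What remains of your proposal --- verify positivity family by family (hooks, two-row shapes, etc.) and extend incrementally --- is indeed the state of the art, and it is the strategy this paper carries out for squares rather than staircases; but it is a research program, not a proof of the statement.
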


Previous work made progress towards the Tensor Square Conjecture, and specifically towards the Saxl Conjecture, see e.g. \cite{pak2013kronecker}, \cite{Ikenmeyer_2015}, \cite{Luo_2016}, \cite{LI2021112340}.
Attempts have also been made to understand the Kronecker coefficients from different aspects: combinatorial interpretations for some known special shapes, see e.g. \cite{Remmel1989AFF}, \cite{10.36045/bbms/1103408635},  \cite{Ballantine2005OnTK}, \cite{blasiak2012kronecker}, \cite{liu2014simplified}; from the perspective of the computational complexity of computing or deciding positivity of the Kronecker coefficients, see e.g. \cite{article}, \cite{pak2015complexity}, \cite{Ikenmeyer_2017}. 

In 2020, Bessenrodt and Panova made the following conjecture concerned with the shape of partitions satisfying the tensor square conjecture:
\begin{conj}[\cite{panova2023complexity}, Bessenrodt-Panova 2020]\label{conj:tensor_square_general}
For every $n$, there exists $k(n)$ such that the tensor square of every self-conjugate partition whose Durfee size is at least $k(n)$ and is not the $k\times k$ partition satisfies the Tensor Square Conjecture.
\end{conj}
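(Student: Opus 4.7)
The plan is to combine the square-shape positivity results developed in the main body of this paper with a character-theoretic attack in the spirit of Pak--Panova--Vallejo's work on the Saxl conjecture. For self-conjugate $\lambda$, the identity $\chi^{\lambda^T} = \mathrm{sgn} \cdot \chi^\lambda$ forces $\chi^\lambda$ to vanish on odd permutations, so
\[
g(\lambda, \lambda, \mu) \;=\; \frac{1}{n!}\sum_{\sigma \in A_n} \chi^\lambda(\sigma)^2 \, \chi^\mu(\sigma).
\]
Since $\chi^\lambda(\sigma)^2 \geq 0$, positivity is controlled by the sign pattern of $\chi^\mu$ on the even conjugacy classes, with the identity and short-cycle classes dominating once the Durfee size $d$ is large.

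First I would establish a baseline for $\mu$ in the families already handled in the main body of this paper --- three-row shapes with a short second row and smallest part one, near-hooks, and their close variants --- where positivity of $g(d^d, d^d, \mu)$ is known. For these $\mu$, the symmetric arm cells of $\lambda$ outside the Durfee square should contribute only non-negative corrections in the Murnaghan--Nakayama border-strip expansion restricted to diagonally symmetric tableaux, so positivity persists for the full $\lambda$. Next, for $\mu$ outside these families, I would invoke character bounds: the leading term $(f^\lambda)^2$ grows rapidly for self-conjugate $\lambda$ with large Durfee square, and bounds of Fomin--Lulov and Larsen--Shalev for $|\chi^\lambda(\sigma)|$ on non-identity classes should be sharp enough that the main term dominates $\sum_{\sigma \neq e}\chi^\lambda(\sigma)^2 |\chi^\mu(\sigma)|$ once $d \geq k(n)$ for a suitable threshold.

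The hard part will be converting these asymptotic character estimates into an effective, uniform threshold $k(n)$ that works for \emph{every} $\mu \vdash n$ simultaneously. The exclusion of the pure square $\lambda = d \times d$ is essential: the square by itself is known to fail the Tensor Square Conjecture, so the entire content of Conjecture~\ref{conj:tensor_square_general} is that even a minimal layer of symmetric arm cells suffices to push every $g(\lambda, \lambda, \mu)$ strictly above zero. Quantifying this --- rather than settling for an asymptotic statement --- and seamlessly matching the regime where the paper's combinatorial formula applies to the regime where character bounds take over, appears to be the core obstacle; the most natural route would be to extend the paper's explicit formula for $g(d^d, d^d, \mu)$ beyond three-row $\mu$ so that the Durfee/arm decomposition can be used perturbatively, much as the semigroup property is used to propagate Kronecker positivity along partition sums.
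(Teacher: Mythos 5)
The statement you are trying to prove is an open conjecture: the paper states it only as motivation (attributed to Bessenrodt--Panova) and offers no proof of it. The paper's actual results are much narrower --- positivity of $g(\msquare_m,\msquare_m,\mu)$ for specific families of $\mu$ (two-row, near two-row, three-row, near-hook), i.e.\ partial progress on the special case $\lambda=\msquare_m$, which is moreover the one shape \emph{excluded} by the conjecture precisely because the paper exhibits partitions missing from its tensor square. So there is no proof in the paper to compare against, and your proposal should be judged as a standalone argument; as such it has essential gaps.

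Concretely: (1) Your reduction to $g(\lambda,\lambda,\mu)=\frac{1}{n!}\sum_{\sigma\in A_n}\chi^\lambda(\sigma)^2\chi^\mu(\sigma)$ is correct for self-conjugate $\lambda$, but the claim that the identity class "dominates" via Fomin--Lulov or Larsen--Shalev bounds fails exactly where it is needed most: when $f^\mu$ is small (hooks, two-row and near-hook shapes), the main term $(f^\lambda)^2 f^\mu$ is not large enough to absorb the error terms uniformly over all even classes, which is why the Saxl conjecture itself remains open and why this paper must fall back on Littlewood's identity, Pieri, and the semigroup property for precisely those low-dimensional $\mu$. (2) The assertion that the symmetric arm cells outside the Durfee square "contribute only non-negative corrections" in a Murnaghan--Nakayama expansion has no justification; Kronecker coefficients are not monotone under adding cells to one argument, and the only rigorous propagation tool available (the semigroup property, Theorem~\ref{semigroup}) requires compatible decompositions of all three partitions simultaneously --- the paper's own Section~\ref{sec:final} gives an explicit example ($\mu=((m+1)^{m-1},1)$) where both the semigroup property and the character criterion of Theorem~\ref{character} fail. (3) No threshold $k(n)$ is ever constructed, and the uniformity over all $\mu\vdash n$ --- which you correctly identify as the core obstacle --- is left entirely open. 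In short, your outline reproduces the known strategies and correctly locates where they break down, but it does not close any of those gaps, so it is not a proof.
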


In \cite{pak2013kronecker}, Pak, Panova, and Vallejo suggested that caret partitions may satisfy the tensor square conjecture. Many of the arguments on staircase shape could also be adapted for caret shapes and chopped-square shapes.


Most approaches to proving the positivity of a certain family of Kronecker coefficients use the semigroup property, see Section~\ref{background}, which relies on breaking the partition triple into smaller partitions. The minimal elements in this procedure are the rectangular shapes, and thus understanding Kronecker positivity in general starts from understanding Kronecker coefficients of rectangular shapes. 



In this paper, we study the tensor squares of irreducible representations corresponding to square Young diagrams, denoted $\msquare_m$. We show that the Kronecker coefficients $g(\msquare_m,\msquare_m,\mu)$ in the case where $\msquare_m = (m^m)$ has square shape and $\mu = (m^2-k, k-1, 1)$ vanish if and only if $k \le 4$ when $m\ge 5$. We give an explicit formula for $g(\msquare_m,\msquare_m,\mu)$ when $\mu = (m^2-k, k-1, 1)$ has a short second row: 
\begin{thm}[Theorem \ref{prop:zerocase}]
Let $f(k)$ be the number of partitions of $k$ with no parts equal to 1 or 2. Let $\ell_1(\alpha)$ denote the number of different parts of a partition $\alpha$. Then for $2\le k \le m$, \[g(\msquare_m,\msquare_m,(m^2-k,k-1,1)) = \sum_{\substack{\alpha \vdash k-1\\ \alpha_1=\alpha_2}} \ell_1(\alpha) - f(k).\]
\end{thm}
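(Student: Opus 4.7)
The plan is to apply the Jacobi--Trudi formula to $s_{(m^2-k,\,k-1,\,1)}$ and reduce the Kronecker coefficient to sums of squared Littlewood--Richardson coefficients, exploiting the especially rigid structure of LR coefficients for the rectangle $(m^m)$. Concretely, I use the pairing identity
\[
\bigl\langle s_\lambda \ast s_\mu,\; h_{\alpha_1} h_{\alpha_2} \cdots h_{\alpha_r}\bigr\rangle \;=\; \sum_{\mu^{(i)} \vdash \alpha_i} c^\lambda_{\mu^{(1)},\ldots,\mu^{(r)}}\, c^\mu_{\mu^{(1)},\ldots,\mu^{(r)}},
\]
together with the rectangular LR rule: for $\lambda = (m^m)$, the coefficient $c^{(m^m)}_{\alpha,\beta}$ is $\{0,1\}$-valued, equal to $1$ exactly when $\alpha \subseteq (m^m)$ and $\beta$ is the $180^\circ$-rotated complement of $\alpha$ inside $(m^m)$.

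Carrying out the Jacobi--Trudi expansion gives
\[
s_{(m^2-k,\,k-1,\,1)} \;=\; h_{m^2-k}\,h_{k-1}\,h_1 \;-\; h_{m^2-k}\,h_k \;-\; h_{m^2-k+1}\,h_{k-2}\,h_1 \;+\; h_{m^2-k+2}\,h_{k-2}.
\]
For the two two-factor terms, the rectangular rule yields $\langle s_{\msquare_m}\ast s_{\msquare_m},\,h_a h_b\rangle = \#\{\alpha \vdash a : \alpha \subseteq (m^m)\}$, and under the hypothesis $k \le m$ all partitions of $k$ or of $k-2$ fit inside $(m^m)$, contributing $-p(k)+p(k-2)$. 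For the two three-factor terms involving $h_1$, I use $c^{(m^m)}_{(1),\beta,\gamma} = c^{(m^{m-1},\,m-1)}_{\beta,\gamma}$ (since the rotated complement of $(1)$ in $(m^m)$ is precisely the near-rectangle $(m^{m-1},m-1)$). Setting
\[
N(j) \;:=\; \sum_{\gamma \vdash j}\,\sum_{\beta}\, \bigl(c^{(m^{m-1},\,m-1)}_{\beta,\gamma}\bigr)^2,
\]
the Kronecker coefficient becomes
\[
g\bigl(\msquare_m, \msquare_m, (m^2-k,k-1,1)\bigr) \;=\; N(k-1) - N(k-2) - p(k) + p(k-2).
\]

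The main obstacle is to evaluate $N(j)$ combinatorially and to verify the resulting partition identity. For $j \le m-1$ the skew shapes $(m^{m-1},m-1)/\beta$ of size $j$ are concentrated in the bottom-right corner of the near-rectangle, so $N(j)$ stabilizes to a quantity depending only on $j$; I would enumerate these shapes by the partition shape of their $180^\circ$ rotation and compute each squared LR coefficient via a tableaux case analysis, using the fact that the near-rectangle differs from a true rectangle by a single removed corner. The final partition-theoretic step is to show
\[
N(k-1) - N(k-2) - p(k) + p(k-2) \;=\; \sum_{\substack{\alpha \vdash k-1 \\ \alpha_1 = \alpha_2}} \ell_1(\alpha) \;-\; f(k).
\]
The doubled-first-part condition and the weighting by $\ell_1(\alpha)$ suggest that each contributing pair (LR tableau, sign) should match bijectively with a pair (partition of $k-1$ with repeated first part, a chosen distinct part); the correction $-f(k)$ most plausibly arises as an inclusion-exclusion term, since one may verify (via a direct generating-function calculation) that $p(k)-p(k-2)$ counts partitions of $k$ with no part equal to $2$, so subtracting the rigid contribution of partitions of $k$ with all parts $\ge 3$ is exactly what remains after pairing with the doubled-first-part sum. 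Executing this bijection cleanly, most likely by a careful shape-by-shape analysis of LR tableaux on near-rectangle skew shapes and a subsequent matching with marked-partition data, is the crux of the argument.
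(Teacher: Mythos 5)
Your reduction is correct and lands exactly on the paper's intermediate formula: the $3\times 3$ Jacobi--Trudi expansion, the pairing identity, the rectangular LR rule for the two-factor terms (giving $-p(k)+p(k-2)$, valid since $k\le m$), and the identification $c^{(m^m)}_{(1),\beta,\gamma}=c^{(m^{m-1},m-1)}_{\beta,\gamma}$ all check out, and your expression $N(k-1)-N(k-2)-p(k)+p(k-2)$ agrees with what the paper obtains (it reaches the same point via Pieri's rule $s_{(n-k,k-1)}s_{(1)}=s_{(n-k,k-1,1)}+s_{(n-k+1,k-1)}+s_{(n-k,k)}$ and Littlewood's identity rather than the full determinant, but the net effect is identical).

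The genuine gap is that the two steps you flag as ``the crux'' are precisely where the content of the theorem lives, and your sketch of them would not go through as described. First, the evaluation of $N(j)$: a ``shape-by-shape analysis of LR tableaux on near-rectangle skew shapes'' is not needed and would be painful; the paper's Lemma 3.7 instead uses the rotation/complementation symmetry $c^{\lambda_m}_{\beta\mu}=c^{\beta^{\ast}}_{\lambda_m^{\ast}\,\mu}=c^{\beta^{\ast}}_{(1)\mu}$ (where $\ast$ denotes the complement in $(m^m)$ and $\lambda_m^{\ast}=(1)$), so Pieri's rule immediately gives $\sum_{\mu}c^{\lambda_m}_{\beta\mu}=\ell_1(\beta^{\ast})$, and a short case analysis comparing $\ell_1(\beta^{\ast})$ with $\ell_1(\beta)$ yields $N(j)=P_j(m)+\sum_{\beta\vdash j}\ell_1(\beta)$ for $j\le m-1$. (This also shows the squared LR coefficients equal the LR coefficients, so no separate multiplicity-freeness input is needed.) Second, your proposed bijective matching for the final identity is only a guess at its shape; the actual derivation is a chain of elementary but nontrivial partition identities, e.g.\ $\sum_{\beta\vdash k-1,\,\beta_1>\beta_2}\ell_1(\beta)-\sum_{\alpha\vdash k-2}\ell_1(\alpha)=\#\{\beta\vdash k-2:\beta_1=\beta_2\}$ and $p(k)-p(k-1)-\#\{\beta\vdash k-2:\beta_1=\beta_2\}=f(k)$ (the latter using that partitions with $\beta_1=\beta_2$ biject under conjugation with partitions having no part equal to $1$). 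Your observation that $p(k)-p(k-2)$ counts partitions of $k$ with no part equal to $2$ is true but is not by itself the right bookkeeping: the $-p(k-1)$ correction enters through the $N(k-2)$ term, not through the two-factor terms. Until $N(j)$ is actually evaluated and the identity chain carried out, the proof is incomplete.
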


We completely prove the forward direction of the following conjecture and have partial work done towards the other direction, including showing the positivity of square Kronecker coefficients for three-row partitions and near-hooks.
\begin{conj}\label{conj:myconj} For $m \ge 7$, $ g(\msquare_m, \msquare_m,\mu) =0$ if and only if  $\mu \in S$ or $\mu' \in S$, where \[S := \{(m^2-3,2,1), (m^2-4,3,1), (m^2-j,1^j)\mid j \in \{1,2,4,6\}\}.\]\end{conj}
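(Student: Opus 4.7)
The plan is to handle both directions using throughout the self-conjugacy of $\msquare_m$, which combined with the standard Kronecker symmetry $g(\lambda,\mu,\nu) = g(\lambda,\mu',\nu')$ yields $g(\msquare_m,\msquare_m,\mu) = g(\msquare_m,\msquare_m,\mu')$. This reduces both directions to working with $\mu$ up to conjugation.

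For the forward direction, it suffices to show $g(\msquare_m,\msquare_m,\mu) = 0$ for each of the six shapes in $S$. The three shapes of the form $(m^2-k,k-1,1)$ with $k \in \{2,3,4\}$ (namely $(m^2-2,1^2)$, $(m^2-3,2,1)$, and $(m^2-4,3,1)$) follow immediately from Theorem~\ref{prop:zerocase}: for each such $k$ the only partition of $k-1$ with $\alpha_1 = \alpha_2$ is $(1^{k-1})$, contributing $\ell_1 = 1$, while $f(k) = 1$ (the single partition $(k)$), so the difference vanishes. For the three remaining hook cases $(m^2-j,1^j)$ with $j \in \{1,4,6\}$, I would apply the telescoping identity $s_{(n-j,1^j)} = \sum_{i=0}^{j}(-1)^i h_{n-j+i}e_{j-i}$ together with the branching-based identity
\[
\langle s_\lambda \ast s_\mu,\, h_a e_b\rangle \;=\; \sum_{\alpha \vdash a,\, \beta \vdash b} c^\lambda_{\alpha,\beta}\, c^\mu_{\alpha,\beta'},
\]
specialized to $\lambda = \mu = \msquare_m$. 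Using the Littlewood--Richardson symmetry together with self-conjugacy, $c^{\msquare_m}_{\alpha,\beta'} = c^{\msquare_m}_{\alpha',\beta}$, so the sum collapses to $\sum_{\alpha,\beta} c^{\msquare_m}_{\alpha,\beta}\, c^{\msquare_m}_{\alpha',\beta}$. For small $j \in \{1,4,6\}$ this becomes an explicit polynomial in $m$ that one verifies is identically zero.

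For the backward direction, the plan is to partition the remaining $\mu \vdash m^2$ (with $\mu, \mu' \notin S$) into structural classes and prove positivity in each. Hooks $(m^2-j,1^j)$ with $j \notin \{1,2,4,6\}$ are handled by the same branching formula, this time yielding a strictly positive value. Three-row shapes $(m^2-k,k-1,1)$ with $k \ge 5$ are positive by direct evaluation of Theorem~\ref{prop:zerocase} (the sum strictly exceeds $f(k)$); general three-row shapes $(\mu_1,\mu_2,\mu_3)$ are reduced to this case via the semigroup property together with a base triple of known positivity. Near-hook partitions (hooks plus a few extra cells) are handled by explicit witness constructions combining the branching identity above with Dvir's recursion. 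Finally, the bulk shapes $\mu$ (at least four rows and four columns, in non-degenerate position) are handled by writing $\msquare_m = \lambda^{(1)} + \lambda^{(2)}$ and $\mu = \mu^{(1)} + \mu^{(2)}$ compatibly and invoking the semigroup property, drawing on known Kronecker positivity for rectangular partitions.

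The main obstacle is the bulk class in the backward direction: since $(m^m)$ does not decompose as a sum of two smaller squares, every semigroup decomposition must involve non-square rectangles, and ensuring that such a decomposition exists for every residual $\mu$ with both factor triples having positive Kronecker coefficient is combinatorially delicate. A secondary challenge is explaining the specific set $\{1,2,4,6\}$ in the hook vanishing: this exceptional pattern must emerge from cancellations in the alternating sum above, and pinning down which polynomials in $m$ vanish identically requires careful analysis of the (finitely many, as $|\beta| \le 6$) coefficients $c^{(m^m)}_{\alpha,\beta}$ that appear. The hypothesis $m \ge 7$ is consistent with this: small-$m$ cases likely exhibit additional coincidental vanishing and must be verified separately.
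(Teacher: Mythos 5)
First, a point of status: this statement is a \emph{conjecture} in the paper, not a theorem. The paper proves only the forward (``only if'') direction, as Corollary \ref{zerocor}, by combining Corollary \ref{cor:zerocase} (the cases $(m^2-2,1,1)$, $(m^2-3,2,1)$, $(m^2-4,3,1)$) with the Ikenmeyer--Panova hook theorem (Theorem \ref{thm:hookpos}) for $(m^2-j,1^j)$, $j\in\{1,2,4,6\}$; toward the converse it establishes only partial results (Theorem \ref{near two-row}, Theorem \ref{mainthm}, Corollary \ref{cor:conclusion-near-hooks}). Your forward direction is essentially sound. The near two-row cases follow the paper's route through Theorem \ref{prop:zerocase}, though your bookkeeping slips at $k=2$: the partition $(1)$ does \emph{not} satisfy $\alpha_1=\alpha_2$, and $f(2)=0$ rather than $1$ (the partition $(2)$ has a part equal to $2$); the two errors cancel, so the conclusion $g=0$ stands. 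Your hook computation is a genuinely different route from the paper's citation of \cite{ikenmeyer2017rectangular}, and it works: since $c^{\msquare_m}_{\alpha\beta}\in\{0,1\}$ with support exactly on complementary pairs in the square, your identity $\langle s_{\msquare_m}\ast s_{\msquare_m}, h_a e_b\rangle=\sum_{\alpha,\beta} c^{\msquare_m}_{\alpha\beta}c^{\msquare_m}_{\alpha\beta'}$ reduces to counting self-conjugate partitions of $b$, giving $g(\msquare_m,\msquare_m,(m^2-j,1^j))=\sum_{i=0}^{j}(-1)^i\,\mathrm{sc}(j-i)$, which for $j\le 7$ and $m\ge 7$ vanishes exactly when $j\in\{1,2,4,6\}$. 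This is self-contained and arguably cleaner than the paper's approach, although the resulting quantity is a constant independent of $m$, not a ``polynomial in $m$.''

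The genuine gap is the backward direction, and it is not a repairable detail: it is exactly the open part of the conjecture. Your plan for the ``bulk'' shapes---decompose $\msquare_m=\lambda^{(1)}+\lambda^{(2)}$ and $\mu=\mu^{(1)}+\mu^{(2)}$ compatibly and invoke the semigroup property---is precisely the strategy whose limitation the paper demonstrates in Section \ref{sec:final}. Because a rectangle decomposes only into rectangles, for $\mu=((m+1)^{m-1},1)$ the only arithmetically compatible decompositions force the triples $(1^m,1^m,1^m)$ and $(m^{m-1},m^{m-1},\ldots)$ with $g(1^m,1^m,1^m)=g(m,m,1^m)=0$, so the semigroup property can never certify positivity for this shape; moreover, the character tool (Theorem \ref{character}) also fails there, since $\chi^{((m+1)^{m-1},1)}(\alpha_m)=0$ for even $m$ by the rim-hook argument in that section. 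So for shapes of large Durfee size there is currently no technique---including the one you propose---that yields positivity, and your own text concedes this (``combinatorially delicate'') without supplying an argument. Your sketches for the intermediate classes also understate the work required: the paper's three-row result (Theorem \ref{mainthm}) is a lengthy induction with case analysis on parities and residues modulo $3$ plus computer-verified base cases, not a one-line semigroup reduction, and the near-hook results rest on \cite{ikenmeyer2017rectangular} together with Murnaghan--Nakayama rim-hook counts. In short: your forward direction is correct (and in the hook cases more self-contained than the paper's), but the backward direction remains unproven, as it must be, since the full statement is open.
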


We state our main results as follows:
\begin{thm}[Corollary \ref{cor:zerocase}, \ref{zerocor}, Theorem \ref{near two-row}, \ref{mainthm}] For every integer $m\ge 7$, let $\mu \vdash m^2$ be a partition of length at most $3$, we have $g(\msquare_m, \msquare_m, \mu)>0$ if and only if $\mu \notin \{(m^2-3,2,1), (m^2-4,3,1), (m^2-2,1,1), (m^2-1,1)\}.$ \end{thm}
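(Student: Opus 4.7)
The plan is to view this theorem as a clean assembly of the four cited results, organized by $\ell(\mu)$. Its content splits into a \emph{vanishing} direction, ruling out the four partitions $(m^2-3,2,1)$, $(m^2-4,3,1)$, $(m^2-2,1,1)$, $(m^2-1,1)$, and a \emph{positivity} direction handling everything else.

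I would first verify the vanishing direction. The three three-row exceptions all have smallest part $1$ and second part at most $3$, so they fit the range $2 \le k \le m$ of the explicit formula in Theorem \ref{prop:zerocase}, with $k \in \{2,3,4\}$. A direct enumeration of partitions $\alpha \vdash k-1$ with $\alpha_1 = \alpha_2$ shows that $\sum_\alpha \ell_1(\alpha) - f(k)$ collapses to $0$ in each case (for $k=2$ no such $\alpha$ exists and $f(2)=0$; for $k=3,4$ the unique $\alpha=(1^{k-1})$ contributes $1$, cancelled by $f(k)=1$). The two-row exception $(m^2-1,1)$ I would separate off using the classical identity expressing the multiplicity of $(n-1,1)$ in $\chi^\lambda \otimes \chi^\lambda$ as the number of distinct parts of $\lambda$ minus $1$, which is $0$ for any rectangle, in particular for $\msquare_m$.

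For the positivity direction I would split on $\ell(\mu)$. The case $\ell(\mu)=1$ gives $\mu=(m^2)$ and is trivial. For $\ell(\mu)=2$, $\mu = (m^2-j, j)$ with $j \ge 2$, positivity is exactly the content of the near-hook result Theorem \ref{near two-row}. For $\ell(\mu)=3$, write $\mu = (m^2-a-b, a, b)$ with $a \ge b \ge 1$; when $b \ge 2$ I would invoke the three-row positivity statement Theorem \ref{mainthm}; when $b=1$ and $4 \le a \le m-1$ the formula of Theorem \ref{prop:zerocase} evaluates to a strictly positive integer, which is the content of Corollary \ref{cor:zerocase}; and when $b=1$ with $a \ge m$ I would apply Corollary \ref{zerocor}, which extends positivity past the domain of the explicit formula.

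The main obstacle does not lie in the present assembly but inside Theorem \ref{mainthm} and Corollary \ref{zerocor}, i.e.\ in three-row positivity in regimes where the closed-form of Theorem \ref{prop:zerocase} no longer applies (either smallest part $\ge 2$, or second part exceeding $m-1$). In those regimes one cannot read positivity off a formula, and I would expect to lean on the semigroup property recalled in Section \ref{background} to reduce $\mu$ to a sum of smaller shapes whose Kronecker positivity against $(\msquare_m,\msquare_m)$ is already known, or else to exhibit an explicit LR-type witness. Assuming the four cited results hold as stated, the present theorem then reduces to checking that the above case division is complete and that the positivity and vanishing ranges meet cleanly at exactly the four excluded partitions for all $m \ge 7$, with no gap or overlap.
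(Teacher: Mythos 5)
Your case division and your treatment of the vanishing direction are correct: evaluating the formula of Theorem \ref{prop:zerocase} at $k=2,3,4$ does give $0$ for the three length-$3$ exceptions, and $(m^2-1,1)$ vanishes either by your classical identity or by Lemma \ref{numofpart} with $k=1$. The paper assembles the positivity direction in the same way, splitting on $\ell(\mu)$ and on the size of the last part. However, two of your citations in the positivity direction point at results that do not say what you need, and as written those steps fail.

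First, for $\ell(\mu)=2$ you attribute positivity of $g(\msquare_m,\msquare_m,(m^2-j,j))$, $j\ge 2$, to Theorem \ref{near two-row}; but that theorem concerns the length-$3$ partitions $(m^2-k,k-1,1)$ and says nothing about genuine two-row shapes. The two-row case is in fact not covered by any of the four results named in the theorem header; it requires Corollary \ref{cor:two-row square}, i.e.\ Theorem \ref{t:strict} together with the strict unimodality of the Gaussian binomial coefficients, which gives $g(\msquare_m,\msquare_m,(m^2-k,k)) = P_k(m)-P_{k-1}(m) > 0$ for $2\le k\le m^2/2$. That is a nontrivial input your proposal never supplies. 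Second, for $\mu=(m^2-a-1,a,1)$ with $a\ge m$ you invoke Corollary \ref{zerocor} to ``extend positivity past the domain of the explicit formula''; Corollary \ref{zerocor} is purely a vanishing statement ($g=0$ on the exceptional set $S$) and cannot produce positivity. The correct tool here is Theorem \ref{near two-row} itself, which asserts positivity of $g(\msquare_m,\msquare_m,(m^2-k,k-1,1))$ for all $k\ge 5$, not only for $k\le m$, so your split of the $b=1$ case at $a=m$ is unnecessary once that theorem is cited where it actually applies. With these two citations repaired (and noting that Theorem \ref{mainthm}, though stated for odd $m$, is proved for even $m$ as well via Proposition \ref{even:mu3ge2}), your assembly matches the paper's.
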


\begin{thm} [Corollary \ref{cor:conclusion-near-hooks}] Let $m$ be an integer and assume that $m \ge 20$. Define near-hook partitions $\mu_i(k,m) := (m^2-k-i,i, 1^k)$. Then for every $i \ge 8$, we have $g(\msquare_m,\msquare_m, \mu_i(k,m)) > 0$ for all $k \ge 0$.
\end{thm}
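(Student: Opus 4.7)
The plan is to induct on $k \ge 0$, with the three-row positivity results of the paper serving as the base cases and the semigroup property of Kronecker coefficients driving the inductive step. Recall the semigroup property: whenever $\lambda = \lambda^{(1)} + \lambda^{(2)}$, $\mu = \mu^{(1)} + \mu^{(2)}$, $\nu = \nu^{(1)} + \nu^{(2)}$ are componentwise decompositions of partitions of matching sizes and $g(\lambda^{(j)}, \mu^{(j)}, \nu^{(j)}) > 0$ for $j = 1, 2$, then $g(\lambda, \mu, \nu) > 0$.

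For the base cases $k = 0, 1$, the near-hook $\mu_i(k, m)$ equals $(m^2 - i, i)$ or $(m^2 - i - 1, i, 1)$, each of length at most three. Since $i \ge 8$, neither belongs to the exceptional set $\{(m^2-3,2,1), (m^2-4,3,1), (m^2-2,1,1), (m^2-1,1)\}$, so Corollaries \ref{cor:zerocase} and \ref{zerocor} apply directly (as $m \ge 20 \ge 7$).

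For the inductive step $k \ge 2$, I would decompose $\msquare_m = (m^m)$ as a column sum $(a^m) + ((m-a)^m)$ (the only partition-sum decompositions of a square), paired with a compatible split $\mu_i(k, m) = \nu^{(1)} + \nu^{(2)}$ with $|\nu^{(1)}| = am$ and $|\nu^{(2)}| = (m-a)m$. The simplest attempt uses $a = 1$: from $\msquare_m = ((m-1)^m) + (1^m)$ and $(1^m) \otimes (1^m) = (m)$, one is forced to take $\nu^{(2)} = (m)$, so $\nu^{(1)} = (m^2 - m - k - i, i, 1^k)$, a near-hook of the same second-row length $i$ but with the column of $1$s intact. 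This reduces the problem to $g(((m-1)^m), ((m-1)^m), \nu^{(1)}) > 0$, a Kronecker coefficient for a non-square rectangle.

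The main obstacle is that column splits of a square produce non-square rectangles, so the induction does not close on $m$ alone. Two natural paths forward: (i) establish a parallel near-hook positivity result for rectangles $(r^m)$ at the same level of generality, enabling iterative column peeling; or (ii) instead choose a more intricate split where $\mu_i(k, m) = \nu^{(1)} + \nu^{(2)}$ decomposes into two pieces each of length at most three (say a three-row part and a hook part), matched against the column split $(a^m) + ((m-a)^m)$ so that positivity of each factor is known via classical Kronecker positivity results for rectangles paired with hooks or two-row shapes (e.g., Remmel's hook formula, Rosas's two-row formula). The hypothesis $m \ge 20$ presumably provides the slack needed to guarantee validity of the split uniformly across all $i \ge 8$ and $k \ge 0$; identifying such a split and verifying its arithmetic conditions is the technical heart of the argument.
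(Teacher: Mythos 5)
There is a genuine gap: your proposal is a plan, not a proof, and you say so yourself when you write that ``identifying such a split and verifying its arithmetic conditions is the technical heart of the argument.'' That heart is missing. Moreover, the inductive framing does not work as stated: your candidate step with $a=1$ produces $\nu^{(1)} = (m^2-m-k-i, i, 1^k)$, which has the \emph{same} $k$ and a non-square rectangle as the other two arguments, so the inductive hypothesis on $k$ is never invoked and the recursion does not close. Your alternative (ii) --- splitting $\mu_i(k,m)$ into two pieces each of length at most three --- is impossible for $k\ge 2$, since a horizontal sum of partitions of length $\le 3$ has length $\le 3$, while $\mu_i(k,m)$ has $k+2$ rows; the column of $1$'s must be carried by one of the summands (or one must pass to the transpose, which you gesture at but do not carry out).

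For comparison, the paper's proof of Proposition \ref{general statement} is not an induction on $k$ but a two-case construction. For $k \ge 7m+9-i$ it transposes $\mu_i(k,m)$ to $(k+2,2^{i-1},1^{m^2-2i-k})$, splits this as a horizontal sum of two \emph{hooks} $(k_1,1^{i-1})+_H(k+2-k_1,1^{m^2-i-k-1})$ with $k_1=7m-i+1$, matches this against the column split $\msquare_m = 7^m +_H (m-7)^m$, and invokes the Ikenmeyer--Panova hook positivity theorem (Theorem \ref{thm:hookpos}) for each rectangle. For $k \le 7m+8-i$ it uses $\msquare_m = \bigl(\msquare_{m_1} +_V (m_1^{\,m-m_1})\bigr) +_H \bigl((m-m_1)^m\bigr)$ with $m_1 = \lceil\sqrt{k+8}\,\rceil$, placing the entire near-hook structure into a $\mu_4(k,m_1)$ inside the small square (positivity from Corollary \ref{cor:square-near-hooks}, i.e.\ the rectangular near-hook theorem) and filling the two rectangles with two-row partitions via Theorem \ref{t:strict}; the parity and size conditions are exactly where $m\ge 20$ is used. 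This is essentially your path (i) combined with a transpose-and-hook trick, but the specific choices of $k_1$, $m_1$, and the two-row fillers --- none of which appear in your proposal --- are what make the argument go through. Separately, your base cases cite Corollaries \ref{cor:zerocase} and \ref{zerocor}, which only establish vanishing; for $k=0$ you need Corollary \ref{cor:two-row square} and for $k=1$ you need Theorem \ref{near two-row} (Corollary \ref{cor:zerocase} only covers second row at most $m$, while here $i$ ranges up to about $m^2/2$).
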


The rest of this paper is structured as follows. In Section~\ref{background}, we equip the reader with some required background information and notations. In Section \ref{section:missing}, we present the partitions that do not occur in tensor squares of square partitions. In Section \ref{sec:constituency} and Section \ref{sec:near-hooks}, we present the results on the positivity of square Kronecker coefficients for certain families of partitions. In Section \ref{sec:final}, we will discuss some additional remarks and related further research.

\subsection*{Acknowledgements}
The author would like to thank her advisor, Greta Panova, for suggesting the problem and for helpful guidance and insightful discussions throughout the project.

\section{Background}\label{background}
\subsection{Partitions}
A partition $\lambda$ of $n$, denoted as $\lambda \vdash n$, is a finite list of weakly decreasing positive integers a $(\lambda_1, \dots, \lambda_k)$ such that $\sum_{i =1}^k \lambda_i= n$. Given a partition $\lambda$, the size $|\lambda|$ is defined to be $\sum_{i =1}^k \lambda_i$. The length of $\lambda$ is defined to be the number of parts of the partition and we denote it by $\ell(\lambda).$ We use $P(n)$ to denote the set of all partitions of $n$.

We associate each partition $\lambda = (\lambda_1, \dots, \lambda_k)\vdash n$ with a Young diagram, which is a left justified array of $n$ boxes with $\lambda_i$ boxes in row $i.$ Denote by $\lambda'$ the conjugate (or transpose) of a partition $\lambda$. 
For instance, below are the Young diagrams corresponding to partition $\lambda = (5,3,2)$ and its transpose $\lambda' = (3,3,2,1,1)$.
\[
\ytableausetup {mathmode, boxsize=0.75em}
      \ydiagram{6,3,2}
      \ydiagram{2+3,2+3,2+2,2+1,2+1,2+1}
\]
The Durfee size of a partition $\lambda$, denoted by $d(\lambda)$ is the number of boxes on the main diagonal of the Young diagram of $\lambda.$  For the sake of convenience, we will refer to the irreducible representation corresponding to $\lambda$ be $\lambda.$
\begin{defin}
For $m \ge 1$, we define the square-shaped partition $\msquare_m \vdash m^2$ to be $\msquare_m := (m^m)$.\end{defin}

For $n \in \mathbb N$, we denote the symmetric group on $n$ symbols by $S_n$.  Let $\lambda, \mu \vdash n$. We say that $\lambda$ dominates $\mu$, denoted by $\lambda \unrhd \mu$, if $\sum_{i=1}^j \lambda_i \ge \sum_{i=1}^j \mu_i$ for all $j$. 

Let $p_k(a,b)$ denote the number of partitions of $k$ that fit into an $a \times b$ rectangle. We denote the number of partitions of $k$ that fit into an $m\times m$ square by $P_k(m).$ Note that $P_k(m) = p_k(m,m).$

Given $\mu \vdash n$, let $\chi^{\mu}$ denote the irreducible character of the symmetric group $S_{\mu}$ and let $\chi^{\mu}[\alpha]$ denote the value of $\chi^\mu (\omega)$ on any permutation $\omega$ of cycle type $\alpha.$ The characters can be computed using the Murnaghan-Nakayama Rule (see e.g. \cite{stanley1997enumerative} for more details about the rule).
\begin{thm}[Murnaghan-Nakayama Rule] We have \[\chi^{\lambda}(\alpha) = \sum_T(-1)^{ht(T)},\] summed over all border-strip tableaux of shape $\lambda$ and type $\alpha$ and $ht(T)$ is the sum of the heights of each border-strip minus $\ell(\alpha)$. 
\end{thm}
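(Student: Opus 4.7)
The plan is to derive the Murnaghan-Nakayama rule from the Frobenius formula together with a Pieri-type identity for multiplying a Schur function by a single power sum. The Frobenius formula expands the power-sum symmetric function as $p_\alpha = \sum_{\lambda \vdash n} \chi^\lambda(\alpha)\, s_\lambda$, so if each product $p_k s_\mu$ can be expanded as a signed sum of $s_\nu$ indexed by border strips of size $k$ added to $\mu$, then iterating over the parts of $\alpha$ and matching coefficients will recover exactly the border-strip tableaux combinatorics in the claim.

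The key lemma is the Pieri-type rule $p_k s_\mu = \sum_\nu (-1)^{\mathrm{ht}(\nu/\mu)} s_\nu$, where $\nu$ ranges over partitions such that $\nu/\mu$ is a border strip of size $k$. I would prove this by combining the Jacobi-Trudi identity $s_\mu = \det(h_{\mu_i - i + j})$ with Newton's identity, which expresses $p_k$ as an alternating sum of products of complete homogeneous symmetric functions $h_j$. Multiplying the determinant by $p_k$, using multilinearity of $\det$, and collecting terms via row operations, the surviving contributions correspond precisely to enlarging the entries $h_{\mu_i - i + j}$ along a path that traces a border strip through the shape of $\mu$. The number of rows the strip occupies minus one becomes the exponent of $-1$, yielding the claimed $(-1)^{\mathrm{ht}(\nu/\mu)}$ sign.

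With this lemma in hand, I iterate: writing $\alpha = (\alpha_1, \ldots, \alpha_\ell)$ and $p_\alpha = p_{\alpha_1} \cdots p_{\alpha_\ell}$, starting from $s_\emptyset = 1$, each multiplication by $p_{\alpha_j}$ appends a border strip of size $\alpha_j$ to the current shape. After $\ell$ steps I obtain
\[
p_\alpha \;=\; \sum_{\lambda \vdash n} \left( \sum_T (-1)^{\mathrm{ht}(T)} \right) s_\lambda,
\]
where the inner sum is over border-strip tableaux $T$ of shape $\lambda$ and type $\alpha$, and $\mathrm{ht}(T)$ is the total height minus $\ell(\alpha)$, matching the conventions in the statement. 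Comparing with the Frobenius formula and using linear independence of Schur functions yields $\chi^\lambda(\alpha) = \sum_T (-1)^{\mathrm{ht}(T)}$. The main obstacle is the Pieri-type lemma itself: identifying which determinantal monomials survive after row operations and verifying that the surviving sign is exactly $(-1)^{\mathrm{ht}(\nu/\mu)}$ requires a careful case analysis on the structure of the added strip. An alternative route uses the vertex operator realization of Schur functions in the fermionic Fock space and computes the action of $p_k$ directly, recovering the border-strip rule more cleanly but needing additional setup.
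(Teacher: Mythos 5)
The paper never proves this statement at all---it is quoted as classical background with a citation to Stanley, so there is no internal proof to compare against; the relevant comparison is with the standard literature proof, and your proposal follows exactly that skeleton. Expanding $p_\alpha=\sum_{\lambda\vdash n}\chi^\lambda(\alpha)\,s_\lambda$, establishing the single-power-sum rule $p_k\,s_\mu=\sum_\nu(-1)^{\mathrm{ht}(\nu/\mu)}s_\nu$ over border strips $\nu/\mu$ of size $k$, iterating over the parts of $\alpha$, and comparing coefficients by linear independence of Schur functions is precisely how Stanley's \emph{Enumerative Combinatorics} (Section 7.17) and Macdonald prove the rule, and your bookkeeping of the sign convention (total height minus $\ell(\alpha)$, i.e.\ each strip contributing its number of rows minus one) matches the convention in the statement. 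The only substantive weakness is the one you flag yourself: all the content of the theorem lives in the key lemma, and the route you sketch for it (Jacobi--Trudi plus Newton's identity plus row operations) is more delicate than your write-up suggests. The determinantal identity you would need amounts to $p_k\,s_\mu=\sum_{r}s_{\mu+k\epsilon_r}$, with the right-hand side interpreted through Jacobi--Trudi determinants and then straightened; this only holds if one keeps sufficiently many rows (at least $\ell(\mu)+k$), since an added border strip may extend below the last row of $\mu$, and it is not a termwise consequence of multilinearity, so the ``careful case analysis'' you defer is genuinely the hard part. The classical shortcut is to run the same computation on the bialternant instead: in $n\ge\ell(\mu)+k$ variables, $a_{\mu+\delta}\,p_k=\sum_{i=1}^{n}a_{\mu+\delta+k\epsilon_i}$ holds trivially because $p_k=\sum_i x_i^k$, and each summand either vanishes (repeated exponents) or sorts to $\pm\,a_{\nu+\delta}$ where $\nu/\mu$ is a border strip and the sign of the sorting permutation is exactly $(-1)^{\mathrm{ht}(\nu/\mu)}$; dividing by $a_\delta$ gives the lemma with essentially no case analysis. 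With that substitution (or with the vertex-operator argument you mention) your outline closes up into a complete and correct proof; as written, it is a correct plan whose only nontrivial step remains a sketch.
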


\subsection{The Kronecker coefficients}

When working over the field $\mathbb C$, the Specht modules are irreducible, and they form a complete set of irreducible representations of the symmetric group. Polytabloids associated with the standard Young tableaux form a basis for the Specht modules and hence, the Specht modules can be indexed by partitions. Given $\mu \vdash n$, let $\mathbb S^\mu$ denote the Specht module of the symmetric group $S_n$, indexed by partition $\mu$ (see e.g. \cite{books/daglib/0077285} for more details on the construction of Specht modules).

The Kronecker coefficients $g(\mu,\nu, \lambda)$ are defined as the multiplicity of $\mathbb S^{\lambda}$ in the tensor product decomposition of $\mathbb S^{\mu} \otimes \mathbb S^{\nu}$. In particular, for any $\mu,\nu,\lambda \vdash n$,  we can write 
\[\mathbb S^{\mu} \otimes \mathbb S^{\nu} = \oplus_{\lambda \vdash n} \mathbb S^{\oplus g(\mu,\nu, \lambda)}.\] 
We can also write \[\chi^\mu  \chi^\nu = \sum_{\lambda \vdash n} g(\mu,\nu, \lambda) \chi^{\lambda},\] and it follows that \[g(\mu,\nu, \lambda) = \langle \chi^{\mu} \chi^{\nu}, \chi^{\lambda}\rangle = \frac{1}{n!}\sum_{\omega \in \mathfrak S_n} \chi^{\mu}[\omega]\chi^{\nu}[\omega]\chi^{\lambda}[\omega].\] It follows that the Kronecker coefficients have full symmetry over its three parameters $\mu, \nu, \lambda \vdash n$. Further, since $1^n$ is the sign representation, we have $\chi^{\mu} \chi^{1^n} = \chi^{\mu'}$ and therefore the Kronecker coefficients have the transposition property, namely \[g(\mu,\nu,\lambda) = g(\mu',\nu',\lambda) = g(\mu',\nu, \lambda') = g(\mu,\nu',\lambda').\]

\subsection{Symmetric functions}
For main definitions and properties of symmetric functions, we refer to \cite{stanley1997enumerative} Chapter 7.
Let $h_\lambda$ denote the homogeneous symmetric functions and $s_\lambda$ denote the Schur functions. The Jacobi-Trudi Identity (see e.g. \cite{stanley1997enumerative}) is a powerful tool in our work:
\begin{thm}[Jacobi-Trudi Indentity]\label{jacobi} Let $\lambda = (\lambda_1,\dots,\lambda_n)$. Then \[s_\lambda = \det(h_{\lambda_i+j-i}) _{1\le i,j,\le n}\text{ and }s_{\lambda'} = \det(e_{\lambda_i+j-i}) _{1\le i,j,\le n}.\]
\end{thm}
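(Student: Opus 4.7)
The plan is to prove $s_\lambda = \det(h_{\lambda_i + j - i})_{1 \le i,j \le n}$ via the Lindstr\"om--Gessel--Viennot (LGV) lemma on non-intersecting lattice paths, and then to deduce $s_{\lambda'} = \det(e_{\lambda_i + j - i})_{1 \le i,j \le n}$ by applying the fundamental involution $\omega$ on symmetric functions, which satisfies $\omega(h_k) = e_k$ and $\omega(s_\lambda) = s_{\lambda'}$.

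For the first identity, I would begin by encoding $h_k = \sum_{1 \le y_1 \le \cdots \le y_k} x_{y_1} \cdots x_{y_k}$ combinatorially: interpret each weakly increasing sequence of length $k$ as a monotone lattice path in $\mathbb{Z}^2$ built from unit east and unit north steps, and assign to every east step taken at vertical coordinate $y$ the weight $x_y$. The weight of a path then depends only on the multiset of heights at which it takes east steps, which forms a weakly increasing sequence of length equal to the horizontal displacement. Choose sources $A_i := (1-i, 0)$ and sinks $B_j := (\lambda_j - j + 1, M)$ for $1 \le i, j \le n$ with $M$ large; then the generating series for lattice paths from $A_i$ to $B_j$ equals $h_{\lambda_j + i - j}$, since the horizontal displacement is $\lambda_j + i - j$. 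Using $\det A = \det A^T$, one rewrites
\[
\det(h_{\lambda_i + j - i})_{i, j} \;=\; \sum_{\sigma \in S_n} \mathrm{sgn}(\sigma) \prod_{i=1}^n h_{\lambda_{\sigma(i)} + i - \sigma(i)},
\]
which now expands as a signed weighted sum over $n$-tuples of lattice paths $(P_1, \ldots, P_n)$ with $P_i : A_i \to B_{\sigma(i)}$.

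Next I would invoke the standard LGV involution: on any tuple having at least one intersection, swap the tails of the two lowest-indexed intersecting paths at their first common lattice point. This is weight-preserving and sign-reversing, so all intersecting tuples cancel; moreover the source/sink arrangement forces every surviving non-intersecting tuple to use $\sigma = \mathrm{id}$. The bijection with $\SSYT(\lambda)$ then reads off the heights at which $P_i$ takes its east steps into the $\lambda_i$ cells of row $i$ (the horizontal displacement from $A_i$ to $B_i$ is exactly $\lambda_i$). Rows of the resulting filling are weakly increasing by construction, and non-intersection of the paths translates precisely into strict increase down each column. Summing weights gives $\sum_{T \in \SSYT(\lambda)} x^T = s_\lambda$, finishing the first identity; applying $\omega$ then yields the second.

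The main obstacle is the bookkeeping: reconciling the natural path index $h_{\lambda_j + i - j}$ with the matrix entry $h_{\lambda_i + j - i}$ (which is handled above by transposing the matrix), and checking that the non-intersection condition coincides exactly with strictly increasing columns in the resulting filling. Once these identifications are in place, the LGV cancellation and the SSYT bijection are mechanical, and the $\omega$-transport to the elementary-symmetric version requires no further combinatorial work.
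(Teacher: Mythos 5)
The paper does not actually prove this statement: it is quoted as classical background with a citation to Stanley's \emph{Enumerative Combinatorics}, so there is no internal proof to compare against. Your Lindstr\"om--Gessel--Viennot argument is the standard combinatorial proof of the $h$-version and is correct as outlined: the source/sink placement $A_i=(1-i,0)$, $B_j=(\lambda_j-j+1,M)$ gives path generating functions $h_{\lambda_j+i-j}$, transposing the matrix reconciles the indexing, the tail-swapping involution cancels all intersecting families, the strict decrease of the sink abscissae (from $\lambda_1>\lambda_2-1>\cdots$) forces $\sigma=\mathrm{id}$ on the survivors, and reading off east-step heights gives the bijection with $\SSYT(\lambda)$, with non-intersection equivalent to column-strictness. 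The one step deserving more care is the last: deducing the $e$-version by applying $\omega$ requires knowing $\omega(s_\lambda)=s_{\lambda'}$ \emph{independently} of the Jacobi--Trudi identities (e.g.\ via the dual RSK correspondence), since a common route to $\omega(s_\lambda)=s_{\lambda'}$ is precisely to first establish both determinantal formulas. Either cite such an independent proof, or---more self-containedly---run the LGV argument a second time with paths whose east steps occur at strictly increasing heights (so that a single path encodes $e_k$); applied to $\lambda'$ this yields $s_{\lambda'}=\det(e_{\lambda_i+j-i})$ directly with no appeal to $\omega$.
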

Let $c_{\mu\nu}^{\lambda}$, where $|\lambda| = |\mu|+|\nu|$, denote the Littlewood-Richardson coefficients. Using the Hall inner product on symmetric functions, one can define the Littlewood-Richardson coefficients as $$c_{\mu\nu}^{\lambda} = \langle s_\lambda,s_\mu s_\nu\rangle = \langle s_{\lambda / \mu}, s_\nu\rangle.$$ Namely, the Littlewood-Richardson coefficients are defined to be the multiplicity of $s_\lambda$ in the decomposition of $s_\mu\cdot s_\nu.$ It is well-known that the Littlewood-Richardson coefficients have a combinatorial interpretation in terms of certain semistandard Young tableaux (see e.g.  \cite{stanley1997enumerative}, \cite{books/daglib/0077285}). 

Using the Frobenius map, one can define the Kronecker product of symmetric functions as \[s_\mu \ast s_\nu = \sum_{\lambda \vdash n} g(\mu,\nu,\lambda) s_\lambda.\]

In \cite{Littlewood1958}, Littlewood proved the following identity, which is used frequently in our calculations:
\begin{thm}[Littlewood's Identity]\label{littlewood} Let $\mu,\nu,\lambda$ be partitions. Then
\[s_\mu s_\nu \ast s_\lambda = \sum_{\gamma \vdash |\mu|}\sum_{\delta \vdash |\nu|} c_{\gamma\delta}^{\lambda} (s_{\mu}\ast s_{\gamma})(s_{\nu}\ast s_{\delta}),\]
where $ c_{\gamma\delta}^{\lambda}$ is the Littlewood-Richardson coefficient.
\end{thm}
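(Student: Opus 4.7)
The plan is to prove Littlewood's Identity by translating both sides into the language of representations of symmetric groups via the Frobenius characteristic map, and then applying the projection formula for induced modules together with the branching rule from $S_n$ to a Young subgroup $S_a \times S_b$.

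Let $a=|\mu|$, $b=|\nu|$, and $n=a+b$. Both sides vanish unless $\lambda \vdash n$, so I would assume this. Under the Frobenius characteristic, the product $s_\mu s_\nu$ is the image of the induced module $\mathrm{Ind}_{S_a \times S_b}^{S_n}(\mathbb{S}^\mu \boxtimes \mathbb{S}^\nu)$, while the Kronecker product $\ast$ corresponds to the pointwise product of class functions on $S_n$. Hence the left-hand side corresponds to the $S_n$-character
\[ \chi \;=\; \mathrm{Ind}_{S_a \times S_b}^{S_n}(\chi^\mu \boxtimes \chi^\nu)\cdot \chi^\lambda. \]
The key step is the projection formula: if $H \le G$, $U$ is an $H$-module, and $W$ is a $G$-module, then $(\mathrm{Ind}_H^G U)\otimes W \cong \mathrm{Ind}_H^G(U \otimes \mathrm{Res}_H^G W)$. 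Applied with $H = S_a\times S_b$, $G = S_n$, $U = \mathbb{S}^\mu \boxtimes \mathbb{S}^\nu$ and $W = \mathbb{S}^\lambda$, this rewrites $\chi$ as
\[ \mathrm{Ind}_{S_a \times S_b}^{S_n}\bigl((\chi^\mu \boxtimes \chi^\nu)\cdot \mathrm{Res}_{S_a \times S_b}^{S_n} \chi^\lambda\bigr). \]

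Next I would expand the restriction using the branching rule $\mathrm{Res}_{S_a \times S_b}^{S_n}\mathbb{S}^\lambda = \bigoplus_{\gamma\vdash a,\, \delta \vdash b} c_{\gamma\delta}^\lambda\, \mathbb{S}^\gamma \boxtimes \mathbb{S}^\delta$, which is dual to the Littlewood-Richardson product rule $s_\gamma s_\delta = \sum_\lambda c_{\gamma\delta}^\lambda s_\lambda$ via Frobenius reciprocity. Since external tensor products factor as $(\chi^\mu \boxtimes \chi^\nu)\cdot (\chi^\gamma \boxtimes \chi^\delta) = (\chi^\mu \chi^\gamma)\boxtimes (\chi^\nu \chi^\delta)$, and since the Frobenius characteristic of an induced external tensor product from $S_a \times S_b$ to $S_n$ is the ordinary product of the two characteristics of the factors, taking Frobenius images term by term yields $\sum_{\gamma\vdash a,\, \delta\vdash b} c_{\gamma\delta}^\lambda (s_\mu \ast s_\gamma)(s_\nu \ast s_\delta)$, which is exactly the right-hand side.

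The main obstacle will be keeping the bookkeeping straight between internal (Kronecker) and external (Young-subgroup) tensor products: the projection formula is applied to the external tensor, while the factors $\chi^\mu \chi^\gamma$ and $\chi^\nu \chi^\delta$ it produces are internal tensor products on the smaller symmetric groups $S_a$ and $S_b$, respectively. A secondary technicality is justifying the branching rule in the form used; as it is the adjoint of the Pieri/Littlewood-Richardson product rule via Frobenius reciprocity, this can be cited from standard references such as \cite{stanley1997enumerative} or \cite{books/daglib/0077285} rather than reproved in the paper.
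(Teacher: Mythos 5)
Your proof is correct, but there is nothing in the paper to compare it against: the paper does not prove this statement, it quotes it as a classical identity with a citation to \cite{Littlewood1958} and uses it as a black box. What you have written is the standard modern representation-theoretic derivation, and every step checks out: the identification of $s_\mu s_\nu$ with the Frobenius characteristic of $\mathrm{Ind}_{S_a\times S_b}^{S_n}(\mathbb{S}^\mu\boxtimes\mathbb{S}^\nu)$; the identification of $\ast$ with the pointwise product of class functions on $S_n$; the projection formula $(\mathrm{Ind}_H^G U)\otimes W\cong \mathrm{Ind}_H^G\bigl(U\otimes\mathrm{Res}_H^G W\bigr)$; the branching rule $\mathrm{Res}_{S_a\times S_b}^{S_n}\chi^\lambda=\sum_{\gamma\vdash a,\,\delta\vdash b}c_{\gamma\delta}^{\lambda}\,\chi^\gamma\boxtimes\chi^\delta$, which is indeed dual to the Littlewood--Richardson product rule by Frobenius reciprocity; the factorization $(\chi^\mu\boxtimes\chi^\nu)\cdot(\chi^\gamma\boxtimes\chi^\delta)=(\chi^\mu\chi^\gamma)\boxtimes(\chi^\nu\chi^\delta)$; and the multiplicativity of the characteristic map on modules induced from $S_a\times S_b$. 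The bookkeeping issue you flag, internal versus external tensor products, is genuinely the only delicate point, and you handle it correctly: the projection formula is applied to an induced external product tensored internally with $\mathbb{S}^\lambda$, and the resulting pointwise products $\chi^\mu\chi^\gamma$ and $\chi^\nu\chi^\delta$ live on $S_a$ and $S_b$ respectively, where their characteristics are by definition $s_\mu\ast s_\gamma$ and $s_\nu\ast s_\delta$. Your observation that both sides vanish unless $\lambda\vdash a+b$ (since $c_{\gamma\delta}^{\lambda}=0$ otherwise, and the Kronecker product of homogeneous symmetric functions of unequal degrees is zero) is also correct and worth keeping, since the paper states the identity for arbitrary $\lambda$. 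So your argument could serve as a self-contained proof in the paper, modulo citing the standard facts (projection formula and branching rule) from a reference such as \cite{books/daglib/0077285} or \cite{stanley1997enumerative}.
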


Another useful tool to simplify our calculations is Pieri's rule:
\begin{thm}[Pieri's rule]\label{Pieri's rule} Let $\mu$ be a partition. Then \[s_\mu s_{(n)} = \sum_\lambda s_{\lambda},\]
summed is over all partitions $\lambda$ obtained from $\mu$ by adding $n$ boxes, with no two added elements in the same column.
\end{thm}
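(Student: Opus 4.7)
The plan is to prove the theorem by reducing to cases already handled in the paper, together with an iterative application of the semigroup property of Kronecker coefficients.

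For small $k$ (specifically $k \in \{0, 1\}$), the partition $\mu_i(k, m)$ has length at most $3$. For $i \ge 8$, it does not lie in the exceptional set $\{(m^2 - 3, 2, 1), (m^2 - 4, 3, 1), (m^2 - 2, 1, 1), (m^2 - 1, 1)\}$ from the length-at-most-$3$ positivity theorem (Corollary \ref{cor:zerocase}, \ref{zerocor}, Theorem \ref{near two-row}, \ref{mainthm}); in particular $\mu_i(0,m)=(m^2-i,i)$ and $\mu_i(1,m)=(m^2-i-1,i,1)$ are covered, so positivity is immediate in this regime.

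For $k \ge 2$, I would apply the semigroup property using the decomposition $\msquare_m = ((m-1)^m) + (1^m)$, which is one of the few partition decompositions available for a square (since $\msquare_m = \alpha+\beta$ for partitions $\alpha,\beta$ forces $\alpha,\beta$ to be rectangles with $m$ rows). Since $g((1^m), (1^m), \nu_2) = \delta_{\nu_2, (m)}$, the only compatible decomposition on the $\mu$ side is $\mu_i(k,m) = (m^2-k-i-m, i, 1^k) + (m)$, yielding
\[
g(\msquare_m, \msquare_m, \mu_i(k, m)) \ge g\bigl(((m-1)^m), ((m-1)^m), (m^2 - k - i - m, i, 1^k)\bigr),
\]
valid whenever $m^2 - k - i - m \ge i$. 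Iterating by $((m - j)^m) = ((m - j - 1)^m) + (1^m)$, the problem reduces to showing positivity of a rectangle Kronecker coefficient $g((a^m), (a^m), (am - k - i, i, 1^k))$ for suitably small $a$. For this residual step, I would either invoke known results on rectangle Kronecker positivity or, following the strategy of Theorem \ref{prop:zerocase}, derive an explicit combinatorial formula via the Jacobi--Trudi identity, Littlewood's identity, and Pieri's rule, and exhibit a concrete partition contributing to the count.

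The main obstacle I anticipate is the regime of large $k$ (close to $m^2 - 2i$), where the iterative reduction becomes tight or fails, as the first row of the reduced partition can drop below $i$. To handle this, I would exploit the self-conjugacy of $\msquare_m$: since $g(\msquare_m, \msquare_m, \mu_i(k, m)) = g(\msquare_m, \msquare_m, \mu_i(k, m)')$ with $\mu_i(k, m)' = (k + 2, 2^{i - 1}, 1^{m^2 - k - 2i})$, and the conjugate has a short $1$-tail when $k$ is large, the analogous semigroup reduction from the conjugate side covers the remaining range. The hypothesis $i \ge 8$ and $m \ge 20$ are expected to enter precisely at the final combinatorial step, providing enough ``slack'' for the relevant partition count to be positive uniformly in $k$.
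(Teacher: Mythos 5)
Your proposal does not address the statement at hand. The statement is Pieri's rule: an identity expressing the product $s_\mu s_{(n)}$ of a Schur function with $s_{(n)} = h_n$ as a multiplicity-free sum of Schur functions $s_\lambda$, taken over all $\lambda$ obtained from $\mu$ by adding a horizontal strip of $n$ boxes. Your text instead sketches a positivity argument for the near-hook Kronecker coefficients $g(\msquare_m,\msquare_m,\mu_i(k,m))$ with $i \ge 8$ and $m \ge 20$ (i.e., the content of Proposition \ref{general statement} and Corollary \ref{cor:conclusion-near-hooks}), built on the semigroup property and decompositions of the square partition. Nothing in your argument concerns the product $s_\mu s_{(n)}$, horizontal strips, or the structure of Schur functions, so it cannot serve as a proof of Pieri's rule; it is an attempted proof of a different theorem in the paper.

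Moreover, even if one tried to repurpose your outline, it would be circular: the results you invoke (Theorem \ref{prop:zerocase}, Corollary \ref{cor:zerocase}, Theorem \ref{near two-row}) are themselves proved in this paper \emph{using} Pieri's rule --- it enters the proofs of Corollary \ref{mult-free} and Proposition \ref{prop:zerocase} directly --- so they cannot be used to establish it. Note also that the paper gives no proof of Pieri's rule; it is quoted as a classical fact from the theory of symmetric functions (see \cite{stanley1997enumerative}, Chapter 7). A correct self-contained argument would proceed along standard lines: for instance, by the combinatorial definition of Schur functions as generating functions of semistandard tableaux together with a bijection showing that a tableau of shape $\lambda$ with content counted by $s_\mu h_n$ decomposes uniquely into a tableau of shape $\mu$ and a horizontal strip, or by specializing the Littlewood--Richardson rule to a one-row shape, or via the Jacobi--Trudi identity (Theorem \ref{jacobi}) and a determinantal manipulation. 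None of these ingredients appears in your proposal.
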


\subsection{Semigroup property}
Semigroup property, which was proved in \cite{Christandl2007}, has been used extensively to prove the positivity of some families of partitions. 

For two partitions $\lambda = (\lambda_1,\lambda_2,\dots \lambda_k)$ and $\mu =(\mu_1,\mu_2,\dots \mu_l)$ with $k \le l$, the horizontal sum of $\lambda$ and $\mu$ is defined as $\lambda+_H \mu = \mu+_H \lambda = (\lambda_1+\mu_1, \lambda_2+\mu_2,\dots, \lambda_k+\mu_k, \mu_{k+1},\dots,\mu_l)$. The vertical sum of two partitions can be defined analogously, by adding the column lengths instead of row lengths. We define the vertical sum $\lambda +_{V} \mu$ of two partitions $\lambda$ and $\mu$ to be $(\lambda' +_H \mu')'$. 
\begin{thm}[Semigroup Property \cite{Christandl2007}] \label{semigroup} If $g(\lambda^1,\lambda^2,\lambda^3)>0$ and $g(\mu^1,\mu^2,\mu^3)>0$, then \\$g(\lambda^1 +_H \mu^1, \lambda^2 +_H \mu^2, \lambda^3 +_H \mu^3) > 0.$
\end{thm}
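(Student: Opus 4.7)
The plan is to realize Kronecker coefficients as multiplicities in the symmetric algebra of a triple tensor product, and then exploit the fact that a polynomial ring is an integral domain to multiply highest weight vectors. First I would fix integers $a, b, c$ at least as large as any row lengths appearing in the partitions $\lambda^i$ and $\mu^i$, set $V = \mathbb{C}^a \otimes \mathbb{C}^b \otimes \mathbb{C}^c$, and establish the ``triple Cauchy'' decomposition
\[
\mathrm{Sym}^n(V) \;\cong\; \bigoplus_{\lambda,\, \mu,\, \nu \,\vdash\, n} g(\lambda,\mu,\nu)\; S_\lambda(\mathbb{C}^a) \otimes S_\mu(\mathbb{C}^b) \otimes S_\nu(\mathbb{C}^c)
\]
as a $GL_a \times GL_b \times GL_c$-module. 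This follows from applying Schur--Weyl duality to each of the three tensor factors of $V^{\otimes n}$, using the definition $g(\lambda,\mu,\nu) = \dim (\mathbb{S}^\lambda \otimes \mathbb{S}^\mu \otimes \mathbb{S}^\nu)^{S_n}$, and then taking $S_n$-invariants to land in $\mathrm{Sym}^n(V)$.

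The hypothesis $g(\lambda^1,\lambda^2,\lambda^3) > 0$ now produces a nonzero vector $f_1 \in \mathrm{Sym}^n(V)$ that is fixed by the product of the three standard maximal unipotent subgroups $U_a \times U_b \times U_c$ and has torus weight $(\lambda^1, \lambda^2, \lambda^3)$, namely a highest weight vector of the irreducible type $S_{\lambda^1}(\mathbb{C}^a) \otimes S_{\lambda^2}(\mathbb{C}^b) \otimes S_{\lambda^3}(\mathbb{C}^c)$. Similarly $g(\mu^1,\mu^2,\mu^3) > 0$ yields such an $f_2 \in \mathrm{Sym}^m(V)$ of weight $(\mu^1, \mu^2, \mu^3)$. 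I would then form the product $f_1 f_2 \in \mathrm{Sym}^{n+m}(V)$. Because the unipotent groups act by algebra automorphisms, $f_1 f_2$ remains invariant under $U_a \times U_b \times U_c$; because the maximal torus acts by algebra automorphisms whose weights add under multiplication, the weight of $f_1 f_2$ is exactly the componentwise sum $(\lambda^1 +_H \mu^1,\, \lambda^2 +_H \mu^2,\, \lambda^3 +_H \mu^3)$, which is indeed a triple of partitions since componentwise addition preserves the weakly decreasing property.

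The crux is showing $f_1 f_2 \neq 0$, which is immediate because $\mathrm{Sym}(V)$ is a polynomial ring, hence an integral domain. A nonzero highest weight vector of weight $(\lambda^1 +_H \mu^1,\, \lambda^2 +_H \mu^2,\, \lambda^3 +_H \mu^3)$ in $\mathrm{Sym}^{n+m}(V)$ forces the corresponding summand in the triple Cauchy decomposition to appear with positive multiplicity, which is precisely $g(\lambda^1 +_H \mu^1,\, \lambda^2 +_H \mu^2,\, \lambda^3 +_H \mu^3) > 0$. The main obstacle in writing this out carefully is establishing the triple Cauchy decomposition with the correct identification of horizontal partition sums with additive torus weights; once this is in hand, the entire argument reduces to the single algebraic fact that a polynomial ring has no zero divisors, together with the observation that highest weight vectors form a multiplicatively closed subset of the symmetric algebra.
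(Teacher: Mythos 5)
Your proposal is correct, and it is essentially the proof given in the reference the paper cites: the paper itself imports Theorem~\ref{semigroup} from \cite{Christandl2007} without proof, and the argument there is exactly this one --- realize $g(\lambda,\mu,\nu)$ as the multiplicity of $S_\lambda(\CC^a)\otimes S_\mu(\CC^b)\otimes S_\nu(\CC^c)$ in $\mathrm{Sym}^n(\CC^a\otimes\CC^b\otimes\CC^c)$, multiply highest weight vectors, and use that the polynomial ring has no zero divisors. One slip to fix: $a,b,c$ must be chosen at least as large as the \emph{number of parts} (column lengths) of the partitions involved, not their row lengths, since $S_{\lambda}(\CC^a)=0$ when $\ell(\lambda)>a$; taking $a=b=c\ge n+m$ settles this and leaves the rest of your argument intact.
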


\begin{cor} If $g(\lambda^1,\lambda^2,\lambda^3)>0$ and $g(\mu^1,\mu^2,\mu^3)>0$, then $g(\lambda^1 +_V \mu^1, \lambda^2 +_V \mu^2, \lambda^3 +_H \mu^3) > 0.$
\end{cor}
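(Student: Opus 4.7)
The plan is to deduce this corollary directly from the Semigroup Property (Theorem \ref{semigroup}) together with the transposition symmetry of Kronecker coefficients, exploiting the fact that vertical sum is by definition just horizontal sum performed on the conjugates.

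First I would invoke the transposition property of Kronecker coefficients, $g(\alpha,\beta,\gamma)=g(\alpha',\beta',\gamma)$, which was recorded in the Kronecker coefficients subsection. Applied to the two hypotheses, this immediately yields
\[
g\bigl((\lambda^1)',(\lambda^2)',\lambda^3\bigr)>0
\qquad\text{and}\qquad
g\bigl((\mu^1)',(\mu^2)',\mu^3\bigr)>0 .
\]

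Next I would apply the Semigroup Property (Theorem \ref{semigroup}) to these two positive coefficients. Since the semigroup property preserves positivity under horizontal sum in every coordinate, we obtain
\[
g\bigl((\lambda^1)'+_H(\mu^1)',\ (\lambda^2)'+_H(\mu^2)',\ \lambda^3+_H\mu^3\bigr)>0 .
\]
By the very definition of the vertical sum given just before the corollary, $\alpha+_V\beta=(\alpha'+_H\beta')'$, which rearranges to $\alpha'+_H\beta'=(\alpha+_V\beta)'$. Substituting this identity in the first two slots converts the display to
\[
g\bigl((\lambda^1+_V\mu^1)',\ (\lambda^2+_V\mu^2)',\ \lambda^3+_H\mu^3\bigr)>0 .
\]
Finally, applying the transposition property once more to the first two coordinates gives
\[
g\bigl(\lambda^1+_V\mu^1,\ \lambda^2+_V\mu^2,\ \lambda^3+_H\mu^3\bigr)>0 ,
\]
which is the desired conclusion.

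There is no real obstacle here; the argument is a three-line manipulation. The only point worth double-checking is the bookkeeping of which two coordinates get transposed: the transposition identity requires conjugating the first and second arguments simultaneously (leaving the third alone), which is exactly the pattern needed to turn the two horizontal sums in positions one and two into vertical sums while leaving the horizontal sum in position three untouched.
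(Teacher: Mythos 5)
Your proof is correct and is exactly the intended argument: the paper states this corollary without proof, and the expected derivation is precisely your combination of the transposition property $g(\mu,\nu,\lambda)=g(\mu',\nu',\lambda)$, the Semigroup Property applied to the conjugated triples, and the definitional identity $(\lambda+_V\mu)'=\lambda'+_H\mu'$. The bookkeeping you flag at the end is handled correctly, since conjugating the first two arguments while leaving the third fixed is one of the stated symmetries.
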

Note that by induction, we can extend the semigroup property to an arbitrary number of partitions and a modified version of the semigroup property allows us to use an even number of vertical additions.



\section{Missing partitions in tensor squares of square partitions}\label{section:missing}
In this section, we will show the absence of partitions in the tensor squares of square partitions by discussing the occurrences of two special families of partitions. Note that it follows immediately that the square shape partitions does not satisfy the Tensor Square Conjecture.

\subsection{Near two-row partitions \texorpdfstring{$(m^2-k,k-1,1)$}{}}
Recall that we let $P_k(m)$ denote the number of partitions of $k$ that fit into an $m \times m$ square and let $n = m^2$. The following lemma is proved in \cite{Pak_2013}, see also \cite{VALLEJO2014243}.
\begin{lemma}[\cite{Pak_2013},\cite{VALLEJO2014243}]\label{numofpart} For $1 \le k \le n$, $g(\msquare_m,\msquare_m,(n-k,k)) = P_k(m) - P_{k-1}(m)$.
\end{lemma}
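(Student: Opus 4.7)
The plan is to reduce the Kronecker coefficient to a count of partitions fitting inside the $m \times m$ square, by expanding $s_{(n-k,k)}$ via Jacobi--Trudi and then applying Littlewood's identity so that the resulting inner products turn into sums of squares of rectangular Littlewood--Richardson coefficients, which are $0/1$-valued.

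First, the Jacobi--Trudi identity (Theorem~\ref{jacobi}), combined with $s_{(j)} = h_j$, gives
\[s_{(n-k,k)} \;=\; s_{(n-k)}\,s_{(k)} \;-\; s_{(n-k+1)}\,s_{(k-1)},\]
so
\[g(\msquare_m,\msquare_m,(n-k,k)) \;=\; \bigl\langle s_{\msquare_m}\ast s_{\msquare_m},\, s_{(n-k)}s_{(k)}\bigr\rangle \;-\; \bigl\langle s_{\msquare_m}\ast s_{\msquare_m},\, s_{(n-k+1)}s_{(k-1)}\bigr\rangle.\]
Thus it suffices to evaluate $\bigl\langle s_{\msquare_m}\ast s_{\msquare_m},\, s_{(a)}s_{(b)}\bigr\rangle$ for arbitrary $a+b=n$.

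For this, I would apply Littlewood's identity (Theorem~\ref{littlewood}) with $\mu=(a)$, $\nu=(b)$ and $\lambda=\msquare_m$. Because $s_{(a)}$ is the trivial character, the Kronecker products $s_{(a)}\ast s_\gamma$ and $s_{(b)}\ast s_\delta$ are just $s_\gamma$ and $s_\delta$ respectively whenever $|\gamma|=a$ and $|\delta|=b$, so Littlewood's identity collapses to
\[s_{(a)}s_{(b)}\ast s_{\msquare_m} \;=\; \sum_{\gamma\vdash a,\ \delta\vdash b} c_{\gamma\delta}^{\msquare_m}\, s_\gamma\, s_\delta.\]
Pairing with $s_{\msquare_m}$ via $\langle f\ast g,h\rangle = \langle f,g\ast h\rangle$ (the symmetry of the triple character product) and using $\langle s_{\msquare_m},s_\gamma s_\delta\rangle = c_{\gamma\delta}^{\msquare_m}$ yields the clean identity
\[\bigl\langle s_{\msquare_m}\ast s_{\msquare_m},\, s_{(a)}s_{(b)}\bigr\rangle \;=\; \sum_{\gamma\vdash a,\ \delta\vdash b} \bigl(c_{\gamma\delta}^{\msquare_m}\bigr)^2.\]

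The key input, and the only nontrivial step, is the classical fact that for the rectangle $\msquare_m = (m^m)$ the coefficient $c_{\gamma\delta}^{(m^m)}$ is at most $1$, and equals $1$ precisely when $\delta$ is the $180^\circ$ rotation of the complement of $\gamma$ in the square, i.e.\ $\delta_i = m-\gamma_{m+1-i}$ for all $i$. Consequently $\delta$ is uniquely determined by $\gamma$, and the double sum above counts precisely the partitions $\gamma$ of $a$ fitting inside $(m^m)$; that count is $P_a(m)$. Using the complementation symmetry $P_{n-j}(m)=P_j(m)$ and substituting $a=n-k$ and $a=n-k+1$ into the Jacobi--Trudi difference then gives $g(\msquare_m,\msquare_m,(n-k,k)) = P_k(m)-P_{k-1}(m)$. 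The main obstacle is the rectangular Littlewood--Richardson lemma; once that is invoked, everything else is formal manipulation.
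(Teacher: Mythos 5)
Your argument is correct and complete. Note, however, that the paper does not prove Lemma~\ref{numofpart} at all --- it is imported from \cite{Pak_2013} and \cite{VALLEJO2014243} --- so there is no in-paper proof to compare against. Your route (Jacobi--Trudi to write $s_{(n-k,k)}=h_{n-k}h_k-h_{n-k+1}h_{k-1}$, Littlewood's identity with the trivial characters collapsing $s_{(a)}\ast s_\gamma$ to $s_\gamma$, and the fact that $c_{\gamma\delta}^{(m^m)}$ is $0/1$ and supported exactly on complementary pairs, so the sum of squares counts $P_a(m)$) is the standard proof of this identity and is exactly the technique the paper itself deploys for the near two-row case in Proposition~\ref{prop:zerocase}, where Littlewood's identity, Jacobi--Trudi, and multiplicity-freeness of the rectangular/chopped-square Littlewood--Richardson coefficients are combined in the same way. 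The only cosmetic caveat is that the Jacobi--Trudi step presumes $(n-k,k)$ is a genuine partition, i.e.\ $k\le n/2$; the lemma's stated range $1\le k\le n$ should be read with that understanding (as in Theorem~\ref{t:strict}), and your use of the complementation symmetry $P_{n-j}(m)=P_j(m)$ correctly converts $P_{n-k}(m)-P_{n-k+1}(m)$ into $P_k(m)-P_{k-1}(m)$.
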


Let $\lambda^\ast$ denotes the $m^n$-complement of $\lambda$ with $m = \lambda_1$ and $n = \lambda_1'.$ We define a $\pi$-rotation of a partition $\lambda$ is the shape obtained by rotating $\lambda$ by $180 ^{\circ}$.
Following Thomas and Yong (\cite{thomas_yong_2010}), let the $m^n$-shortness of $\lambda$ denote the length of the shortest straight line segment of the path of length $m+n$ from the southwest to the northeast corner of $m\times n$ rectangle that separates $\lambda$ from the $\pi$-rotation of $\lambda^\ast.$ 
\begin{ex} Consider $\lambda = (8,4,2,2,1) $, $m = \lambda_1 = 8$ and $n =\lambda_1' = 5$. Then $\lambda^\ast = (7,6,6,4).$ The diagram below is a demonstration for the path of length $m+n$ from the southwest to the northeast corner of a $8\times 5$ rectangle that separates $(8,4,2,2,1)$ from the $\pi$-rotation of $(7,6,6,4).$ The shortest straight line segment of the blue path is $1$. Therefore, the $8^5$-shortness of $(8,4,2,2,1)$ is $1$.

\begin{center}
\begin{tikzpicture}[scale=0.75]
\draw[step=0.5cm,black,very thin] (0,0) grid (4,2.5);
\draw[blue, thick] (0,0) -- (0.5,0) --(0.5,0.5) -- (0.5,0.5) --(1,0.5)--(1,1.5) -- (2,1.5)--(2,2)--(4,2) --(4,2.5) ;
\end{tikzpicture}
\end{center}
\end{ex}
\begin{ex} 
Now consider $\lambda = (8,8,8,3,3) $, $m = \lambda_1 = 8$ and $n =\lambda_1' = 5$. Then $\lambda^\ast = (5,5).$ From the diagram below, we can see the lengths of straight line segments of the blue path are $2,2,6,3$, and hence the shortest straight line segment of the blue path is $2$. Therefore, the $8^5$-shortness of $(8,8,8,3,3)$ is $2$.

\begin{center}
\begin{tikzpicture}[scale=0.75]
\draw[step=0.5cm,black,very thin] (0,0) grid (4,2.5);
\draw[blue, thick] (0,0) -- (1.5,0) --(1.5,1) -- (4,1) --(4,2.5) ;
\end{tikzpicture}
\end{center}
\end{ex}

For the following theorem, jointly due to Gutschwager, Thomas and Yong, we follow \cite{dou2009hive}:
\begin{thm}[\cite{Gutschwager_2010}, \cite{thomas_yong_2010}]\label{ty}
The basic skew Schur function $s_{\lambda/\mu}$ is multiplicity-free if and only if at least one of the following is true:
\begin{enumerate}[(i)]
    \item $\mu$ or $\lambda^\ast$ is the zero partition 0;
    \item $\mu$ or $\lambda^\ast$ is a rectangle of $m^n$-shortness 1;
    \item $\mu$ is a rectangle of $m^n$-shortness 2 and $\lambda^\ast$ is a fat hook (or vice versa);
    \item $\mu$ is a rectangle and $\lambda^\ast$ is a fat hook of $m^n$-shortness 1 (or vice versa);
    \item $\mu$ and $\lambda^\ast$ are rectangles;
\end{enumerate}
where $\lambda^\ast$ denotes the $m^n$-complement of $\lambda$ with $m = \lambda_1$ and $n = \lambda_1'.$
\end{thm}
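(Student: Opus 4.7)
The plan is to prove this classification by a bidirectional argument anchored in the Littlewood--Richardson (LR) rule. Writing $s_{\lambda/\mu} = \sum_\nu c^\lambda_{\mu\nu} s_\nu$, multiplicity-freeness is equivalent to $c^\lambda_{\mu\nu} \le 1$ for every $\nu$, so the question reduces to: for which shapes $\lambda/\mu$ does every content $\nu$ admit at most one LR tableau of that skew shape?

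First I would handle sufficiency. For case (i), $s_{\lambda/\mu}$ collapses to a straight Schur function (after using $s_{\lambda/0} = s_\lambda$ or the symmetry $s_{\lambda/\mu} = s_{\lambda^\ast/\mu^\ast}$-style reduction when $\lambda^\ast = 0$), which is tautologically multiplicity-free. For cases (ii)--(v) I would exploit the rigidity of LR tableaux when one of $\mu$ or $\lambda^\ast$ is rectangular: if $\mu$ is an $a \times b$ rectangle, the LR filling of $\lambda/\mu$ becomes highly constrained, and the additional shortness or fat-hook assumption should force any two LR tableaux with the same content $\nu$ to coincide. Each sub-case can be checked by a direct combinatorial count --- for instance, case (v) reduces to a pair-of-rectangles LR number, which is classically $0$ or $1$.

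Next I would turn to necessity: assuming $(\lambda,\mu)$ falls outside the five listed configurations, I need to produce $\nu$ and two distinct LR tableaux of shape $\lambda/\mu$ with content $\nu$. My approach would be to locate an explicit local \emph{swap region} in $\lambda/\mu$ --- a $2\times 2$ (or slightly larger) sub-configuration in which the LR entries can be permuted to yield a distinct but equally valid LR tableau of the same content. By case-splitting on the shapes of $\mu$ and $\lambda^\ast$ (rectangle versus fat hook versus more complex), I would locate such a swap region via geometric arguments, using the hypothesis that we are outside every case of the list to guarantee enough "room" in $\lambda/\mu$ for the swap to take place.

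The hard part will be managing the necessity case analysis: the complement of the five listed shapes is a union of many geometric configurations for the pair $(\mu,\lambda^\ast)$, each requiring a tailored swap construction, and one must also verify that the two resulting fillings are both valid LR tableaux (lattice word condition plus column-strictness). The cleanest organizing principle, following Thomas--Yong, would be to invoke their carton rule: an enumeration of LR tableaux via lattice paths on a three-dimensional "carton" whose faces record $\mu$, $\nu$, $\lambda$ together with their conjugates. The carton rule makes it transparent when two distinct paths exist, detecting forking configurations without the need to construct swaps by hand. Alternatively, one could lean on the conjugation symmetry $c^\lambda_{\mu\nu} = c^{\lambda'}_{\mu'\nu'}$ (as in Gutschwager) together with induction on $|\lambda/\mu|$ to collapse the case analysis down to a handful of minimal "bad" base configurations.
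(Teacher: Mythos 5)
This statement is not proved in the paper at all: it is an external classification theorem imported verbatim from Gutschwager and Thomas--Yong (following the formulation in the cited survey), and the paper only ever \emph{uses} it (in Corollary \ref{mult-free}). So there is no in-paper proof to compare against, and the only question is whether your proposal stands on its own. It does not. What you have written is an architecture, not an argument. For sufficiency, the decisive step in cases (ii)--(iv) is the sentence ``the additional shortness or fat-hook assumption should force any two LR tableaux with the same content $\nu$ to coincide'' --- that is precisely the assertion to be proved, and nothing in the proposal explains \emph{how} shortness $1$ versus shortness $2$, or rectangle versus fat hook, enters the uniqueness argument. (Only case (v), rectangles against rectangles, genuinely reduces to a classical $0/1$ fact.) For necessity, you correctly identify the standard device --- exhibiting two distinct LR fillings of the same content via a local swap --- but you construct no swap in any configuration, and you acknowledge that the complement of the five listed cases splits into many geometric configurations each needing its own construction. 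Naming the carton rule or the symmetry $c^{\lambda}_{\mu\nu} = c^{\lambda'}_{\mu'\nu'}$ as possible organizing tools does not discharge that case analysis.

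Two further concrete gaps. First, the hypothesis that $s_{\lambda/\mu}$ is \emph{basic} (no empty rows or columns in the skew diagram, so that $\mu \subseteq \lambda$ tightly and the $m^n$-complement $\lambda^\ast$ is the right object to look at) is never used in your outline, yet the classification is stated only for basic shapes and the reduction to the basic case is a necessary preliminary step. Second, the statement is symmetric in $\mu$ and $\lambda^\ast$ because $\lambda/\mu$ and its $\pi$-rotation $\mu^\ast/\lambda^\ast$ give equal skew Schur functions; this symmetry is what makes ``(or vice versa)'' in (iii) and (iv) costless, and your proof plan should invoke it explicitly at the outset rather than treating $\mu$ and $\lambda^\ast$ asymmetrically. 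As it stands, the proposal is a plausible reading of how the original papers proceed, but it proves neither direction.
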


\begin{cor} \label{mult-free} Let $\lambda_m = (m^{m-1}, m-1)$ denote the chopped square partition of size $m^2-1.$ For every pair of partitions $\beta$ and $\mu$ such that $|\beta| + |\mu| = m^2-1$, $c_{\beta\mu}^{\lambda_m} \in\{0,1\}.$ 
\end{cor}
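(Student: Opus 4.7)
The plan is to recognize $c_{\beta\mu}^{\lambda_m}$ as a pairing of Schur functions and reduce the corollary to checking that the skew Schur function $s_{\lambda_m / \mu}$ is multiplicity-free for every $\mu$. Using the Hall inner product identity $c_{\beta\mu}^{\lambda_m} = \langle s_{\lambda_m / \mu}, s_\beta \rangle$, the bound $c_{\beta\mu}^{\lambda_m} \in \{0,1\}$ will follow at once from the multiplicity-freeness of $s_{\lambda_m / \mu}$, so the entire statement collapses to an application of the Gutschwager--Thomas--Yong classification, Theorem \ref{ty}.

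To invoke Theorem \ref{ty}, I would first identify the relevant bounding rectangle and complement. For $\lambda_m = (m^{m-1}, m-1)$ we have $\lambda_{m,1} = m$ and $\lambda_{m,1}' = m$, so $\lambda_m$ sits inside the $m \times m$ rectangle. Since $|\lambda_m| = m^2 - 1$, the $m^m$-complement $\lambda_m^\ast$ consists of a single missing cell at the bottom-right corner, i.e.\ $\lambda_m^\ast = (1)$. This is trivially a (degenerate) rectangle, so condition (ii) of Theorem \ref{ty} reduces to computing the $m^m$-shortness.

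Next I would verify that the $m^m$-shortness of $\lambda_m$ equals $1$. Drawing the unique monotone lattice path from the southwest to the northeast corner of the $m \times m$ rectangle that separates $\lambda_m$ from the $\pi$-rotated single box $\lambda_m^\ast$, one reads off the straight segments of lengths $m-1, 1, 1, m-1$, so the shortest segment has length $1$, exactly as required. Condition (ii) of Theorem \ref{ty} therefore applies \emph{uniformly in $\mu$}, so $s_{\lambda_m / \mu}$ is multiplicity-free for every $\mu \subseteq \lambda_m$ (and for $\mu \not\subseteq \lambda_m$ we have $s_{\lambda_m / \mu} = 0$ trivially).

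There is no real obstacle here; the argument is a direct specialization of Theorem \ref{ty}. The only point requiring care is matching definitions: confirming that \enquote{rectangle} in the classification includes the $1 \times 1$ block $(1)$, and confirming that the $m^m$-shortness is measured relative to the $\lambda_1 \times \lambda_1' = m \times m$ bounding rectangle of $\lambda_m$ rather than of $\lambda_m^\ast$. Once these conventions are pinned down, pairing with $s_\beta$ yields $c_{\beta\mu}^{\lambda_m} \in \{0,1\}$ for all decompositions $|\beta|+|\mu| = m^2-1$, completing the proof.
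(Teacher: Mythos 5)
Your proposal is correct and follows essentially the same route as the paper: identify $\lambda_m^\ast=(1)$, compute the $m^m$-shortness as $1$ from the segment lengths $m-1,1,1,m-1$, and apply condition (ii) of Theorem \ref{ty} to conclude that the relevant skew Schur function is multiplicity-free, hence $c_{\beta\mu}^{\lambda_m}\in\{0,1\}$. The only cosmetic differences are that the paper skews by $\beta$ rather than $\mu$ (equivalent by the symmetry of the Littlewood--Richardson coefficient) and explicitly checks that the skew shape is basic before invoking the classification, a minor point you gloss over.
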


\begin{proof} Let $\lambda_m^\ast$ denote the $m^m$-complement of $\lambda_m$. Then $\lambda_m^\ast = (1).$ The lengths of straight line segments of the path from the southwest to the northeast corner that separates $\lambda_m$ from $\lambda_m^\ast$ are $m-1,1,1, m-1$, and therefore the $m^m$-shortness of $\lambda_m^\ast$ is 1. Let $\beta \vdash k \le m^2-1$. Then, $s_{\lambda_m/\beta}$ is a basic skew Schur function as the difference between consecutive rows in $\lambda_m$ is at most 1. By Theorem \ref{ty} A1, $s_{\lambda_m/\beta}$ is multiplicity-free, which implies that $c_{\beta\mu}^{\lambda_m} \in \{0,1\}$ for any $\mu \vdash m^2-1-k$.
\end{proof}
 
\begin{lemma} Let $\ell_1(\alpha)$ denote the number of different parts of partition $\alpha$. For $1 \le k \le m$,
\[\sum_{\beta \vdash k-1}\sum_{\mu \vdash m^2-k} c_{\beta\mu}^{\lambda_m}= P_{k-1}(m) + \sum_{\beta \vdash k-1} \ell_1(\beta).\] 
\end{lemma}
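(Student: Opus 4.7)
The plan is to reduce the identity to a per-$\beta$ count via Corollary~\ref{mult-free}, and then establish that per-$\beta$ count by a direct combinatorial analysis of the skew shapes $\lambda_m/\mu$ of size $k-1$. By Corollary~\ref{mult-free}, each coefficient $c_{\beta\mu}^{\lambda_m}$ is $0$ or $1$, so the LHS simply counts pairs $(\beta, \mu)$ with $c_{\beta\mu}^{\lambda_m} = 1$. Since $k - 1 \le m - 1$, every $\beta \vdash k-1$ fits in $(m-1) \times (m-1) \subseteq m \times m$, so $P_{k-1}(m)$ equals the total number of partitions of $k-1$, and the RHS equals $\sum_{\beta \vdash k-1}(\ell_1(\beta) + 1)$. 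Thus the lemma reduces to the per-$\beta$ claim
\[\#\{\mu \vdash m^2 - k : c_{\beta\mu}^{\lambda_m} = 1\} = \ell_1(\beta) + 1 \qquad \text{for every } \beta \vdash k - 1.\]

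Next, I parametrize the relevant skew shapes. Setting $b_i := \lambda_{m,i} - \mu_i$, the requirement that $\mu \subseteq \lambda_m$ be a partition with $|\lambda_m/\mu| = k-1$ translates to $0 \le b_1 \le b_2 \le \cdots \le b_{m-1}$, $b_m \ge \max(0, b_{m-1}-1)$, and $\sum_i b_i = k-1$. Since $k - 1 \le m - 1$, these skew shapes are concentrated near the bottom-right corner of $\lambda_m$, and the tuple $(b_1, \ldots, b_m)$ determines $\mu$ uniquely. To establish the per-$\beta$ claim, I would enumerate these shapes and use the Littlewood--Richardson rule, together with the multiplicity-freeness of Corollary~\ref{mult-free}, to determine exactly when $s_\beta$ appears in $s_{\lambda_m/\mu}$. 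Small-case analysis suggests that for $\beta$ with distinct parts $d_1 > \cdots > d_t$ (so $\ell_1(\beta) = t$), the $t+1$ valid shapes consist of one canonical placement of $\beta$ as close to a straight shape at the bottom-right as possible, plus one additional shape associated with each distinct part $d_i$ of $\beta$.

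The main obstacle will be giving a uniform combinatorial description of the $\ell_1(\beta) + 1$ valid $\mu$'s and verifying that no others occur. A natural strategy is to organize the analysis by the pair $(r, b_m)$ where $r = \#\{i < m : b_i > 0\}$, splitting into the two structurally distinct cases $b_m \ge b_{m-1}$ and $b_m = b_{m-1} - 1$, and to construct the LR filling of each claimed valid shape explicitly. As an alternative route, one might attempt to prove the stronger symmetric-function identity $\sum_{\mu \vdash m^2-k} s_{\lambda_m/\mu} = \sum_{\beta \vdash k-1}(\ell_1(\beta) + 1)\, s_\beta$ directly, for instance via Jacobi--Trudi applied to each $s_{\lambda_m/\mu}$, or by induction on $k$ combined with Pieri's rule; once this identity is in hand, extracting Schur coefficients yields both the per-$\beta$ claim and the lemma.
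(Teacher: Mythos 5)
Your reduction is sound as far as it goes: since $k-1\le m-1$, every $\beta\vdash k-1$ fits inside $\lambda_m$, so $P_{k-1}(m)=p(k-1)$ and the lemma is equivalent to the per-$\beta$ statement $\#\{\mu\vdash m^2-k : c_{\beta\mu}^{\lambda_m}=1\}=\ell_1(\beta)+1$, with Corollary~\ref{mult-free} converting the sum of coefficients into a count of pairs. You have also guessed the correct per-$\beta$ answer. However, the proposal stops exactly at the point where the actual work begins: you write that you ``would enumerate these shapes,'' that ``small-case analysis suggests'' the description of the valid $\mu$, and that ``the main obstacle will be'' verifying that no other $\mu$ occur. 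None of the three routes you sketch (case analysis on $(r,b_m)$ with explicit LR fillings, a Jacobi--Trudi computation, or induction with Pieri) is carried out, so the central counting step is unproven. In particular, verifying that \emph{no} $\mu$ beyond your $\ell_1(\beta)+1$ candidates contributes is precisely the nontrivial part, and a direct LR analysis of the skew shapes $\lambda_m/\mu$ would be genuinely delicate.

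The idea that closes this gap in the paper's proof is complementation inside the $m\times m$ square: since $\lambda_m/\beta$ and $\beta^{\ast}/\lambda_m^{\ast}$ coincide after a $180^{\circ}$ rotation, and the $m^m$-complement of the chopped square is the single box $\lambda_m^{\ast}=(1)$, one has $c_{\beta\mu}^{\lambda_m}=c_{\lambda_m^{\ast}\mu}^{\beta^{\ast}}=c_{(1)\mu}^{\beta^{\ast}}$. Pieri's rule then says this equals $1$ exactly when $\mu$ is obtained from $\beta^{\ast}$ by deleting one removable corner, so the per-$\beta$ count is the number of distinct parts $\ell_1(\beta^{\ast})$, and since $\beta_1<m$ and $\ell(\beta)<m$ one checks $\ell_1(\beta^{\ast})=\ell_1(\beta)+1$. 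This replaces your entire proposed enumeration with a two-line argument; I would recommend incorporating it rather than attempting the direct skew-shape analysis.
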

\begin{proof} Let $1 \le k \le m$. Since $c_{\beta\mu}^{\lambda_m} = 1 > 0 $, partitions $\beta, \mu \subseteq \lambda_m \subseteq \msquare_m$. Let $\beta^\ast$ and $\lambda_m^\ast$ denote the complements of $\beta$ and $\lambda_m$ inside the $m \times m$ square, respectively. Since $c_{\beta\mu}^{\lambda_m}$ depends only on $\mu$ and the skew partition $\lambda_m / \beta$, and the skew partitions $\lambda_m / \beta$ and $\beta ^\ast / \lambda_m^\ast$ are identical when rotated, we have $c_{\beta\mu}^{\lambda_m}  = c_{\lambda_m^{\ast}\mu}^{\beta^{\ast}} =c_{(1)\mu}^{\beta^{\ast}}$.
By the Pieri's rule (Theorem \ref{Pieri's rule}), $c_{(1)\mu}^{\beta^{\ast}} =1 $ if and only if  $\mu$ is a partition obtained from $\beta^{\ast}$ by removing 1 element. Since the number of ways to obtain a partition by removing an element from $\beta^{\ast}$ is $\ell_1(\beta^{\ast})$, we have \[\sum_{\beta \vdash k-1}\sum_{\mu \vdash m^2-k} c_{\beta\mu}^{\lambda_m}=\sum_{\substack{\beta \vdash k-1\\ \beta \subseteq \lambda_m}}\sum_{\mu \vdash m^2-k} c_{(1)\mu}^{\beta^{\ast}}  =\sum_{\substack{\beta \vdash k-1\\ \beta \subseteq \lambda_m}} \ell_1(\beta^{\ast}).\] 
Note that $\ell_1(\beta^{\ast}) = \ell_1(\beta) - 1$ if $\beta_1 = \ell(\beta) = m$; $\ell_1(\beta^{\ast}) = \ell_1(\beta)$ if exactly one of $\beta_1, \ell(\beta)$ is $m$;   
otherwise, $\ell_1(\beta^{\ast}) = \ell_1(\beta)+1$. Hence, when $1 \le k \le m$, we have $$\sum_{\substack{\beta \vdash k-1\\ \beta \subseteq \lambda_m}} \ell_1(\beta^{\ast}) =  P_{k-1}(m) + \sum_{\beta \vdash k-1} \ell_1(\beta).$$\end{proof}

\begin{prop}[near two-row partitions]\label{prop:zerocase} Let $2\le k \le m$. Let $f(k)$ denote the number of partitions of $k$ with no parts equal to 1 or 2, and $\ell_1(\alpha)$ denote the number of different parts of partition $\alpha$. Then \[g(\msquare_m,\msquare_m,(n-k,k-1,1)) = \sum_{\substack{\alpha \vdash k-1\\ \alpha_1=\alpha_2}} \ell_1(\alpha) - f(k).\]
\end{prop}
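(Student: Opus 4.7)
The plan is to carry out a Pieri--Littlewood reduction from $g(\msquare_m,\msquare_m,(n-k,k-1,1))$ to a Kronecker coefficient for $\lambda_m = (m^{m-1},m-1)$, and then invoke the preceding lemma on $\sum c^{\lambda_m}_{\beta\mu}$ together with Corollary~\ref{mult-free} to finish. First I would apply Pieri's rule to write
\[
s_{(n-k, k-1, 1)} \;=\; s_{(n-k,k-1)}\,s_{(1)} \;-\; s_{(n-k+1,k-1)} \;-\; s_{(n-k,k)},
\]
so that, pairing with $s_{\msquare_m}\ast s_{\msquare_m}$ and applying Lemma~\ref{numofpart} to the last two terms,
\[
g(\msquare_m,\msquare_m,(n-k,k-1,1)) = \bigl\langle s_{\msquare_m}\ast s_{\msquare_m},\; s_{(n-k,k-1)}s_{(1)}\bigr\rangle - P_k(m) + P_{k-2}(m).
\]

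Next I would handle the remaining inner product via Littlewood's identity (Theorem~\ref{littlewood}). Since the only $\gamma\vdash n-1$ with $c^{\msquare_m}_{\gamma,(1)}\neq 0$ is $\gamma=\lambda_m$ (removing a single corner box from the square), and $s_{(1)}\ast s_{(1)}=s_{(1)}$, Littlewood collapses to
\[
s_{\msquare_m}\ast\bigl(s_{(n-k,k-1)}\,s_{(1)}\bigr) \;=\; \bigl(s_{\lambda_m}\ast s_{(n-k,k-1)}\bigr)\,s_{(1)}.
\]
The skew-adjunction $\langle f\cdot s_{(1)},\,s_{\msquare_m}\rangle=\langle f,\,s_{\msquare_m/(1)}\rangle=\langle f,\,s_{\lambda_m}\rangle$ then identifies the bracket with $g(\lambda_m,\lambda_m,(n-k,k-1))$.

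To evaluate $g(\lambda_m,\lambda_m,(n-k,k-1))$, I would repeat the trick on two-row shapes: Jacobi--Trudi (Theorem~\ref{jacobi}) gives $s_{(n-k,k-1)} = s_{(n-k)}s_{(k-1)} - s_{(n-k+1)}s_{(k-2)}$, and for each product $s_{(a)}s_{(b)}$, Littlewood combined with $s_{(j)}\ast s_\gamma = s_\gamma$ (Kronecker with a trivial $S_j$-representation) yields
\[
\bigl\langle s_{\lambda_m}\ast s_{\lambda_m},\, s_{(a)}s_{(b)}\bigr\rangle \;=\; \sum_{\gamma\vdash a,\,\delta\vdash b}(c^{\lambda_m}_{\gamma\delta})^2 \;=\; \sum_{\gamma\vdash a,\,\delta\vdash b} c^{\lambda_m}_{\gamma\delta},
\]
where the second equality uses the multiplicity-freeness of Corollary~\ref{mult-free}. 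The preceding lemma evaluates both sums for $(a,b)=(n-k,k-1)$ and $(n-k+1,k-2)$, so combining with the first display yields
\[
g(\msquare_m,\msquare_m,(n-k,k-1,1)) = P_{k-1}(m) - P_k(m) + \sum_{\beta\vdash k-1}\ell_1(\beta) - \sum_{\beta\vdash k-2}\ell_1(\beta).
\]

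The remaining task is to match this with the stated form. Since $k\le m$, $P_j(m)=P(j)$ for the relevant $j$, and writing $\sum_{\beta\vdash n}\ell_1(\beta)=\sum_{v\ge 1}P(n-v)$ shows that the $\ell_1$-difference telescopes to $P(k-2)$, so the expression becomes $P(k-1)+P(k-2)-P(k)$. Two partition identities then close the argument: (a) $\sum_{\alpha\vdash n,\,\alpha_1=\alpha_2}\ell_1(\alpha) = P(n-2)$, obtained by grouping by $a=\alpha_1=\alpha_2$ and the distinguished distinct value $v$, which reduces to Euler's $q$-series identity $\sum_{b\ge 0}q^{2b}/(q;q)_b=\prod_{i\ge 2}(1-q^i)^{-1}$; and (b) $P(k)-P(k-1)-P(k-2)+P(k-3)=f(k)$, which is the $x^k$-coefficient of $(1-x)(1-x^2)P(x)=\prod_{i\ge 3}(1-x^i)^{-1}$. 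Combining (a) with (b) rewrites $P(k-1)+P(k-2)-P(k) = P(k-3)-f(k) = \sum_{\alpha\vdash k-1,\,\alpha_1=\alpha_2}\ell_1(\alpha)-f(k)$, matching the claimed formula. The main obstacle will be identity (a): the Kronecker computation naturally outputs $P(k-1)+P(k-2)-P(k)$, and bridging this to the restricted $\alpha_1=\alpha_2$ sum is the combinatorial heart of the argument; once established, the rest of the proof is a systematic assembly of Pieri, Jacobi--Trudi, Littlewood, and the multiplicity-free consequence of Corollary~\ref{mult-free}.
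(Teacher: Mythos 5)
Your proposal is correct and follows essentially the same route as the paper: Pieri's rule to peel off the third-row box, Littlewood's identity and skew-adjunction to reduce to $\langle s_{\lambda_m}, s_{(n-k,k-1)}\ast s_{\lambda_m}\rangle$, then Jacobi--Trudi plus the multiplicity-freeness of Corollary~\ref{mult-free} and the preceding lemma to evaluate the Littlewood--Richardson sums. The only divergence is in the final bookkeeping: the paper splits $\sum_{\beta\vdash k-1}\ell_1(\beta)$ according to whether $\beta_1=\beta_2$ and matches terms directly, whereas you telescope via $\sum_{\beta\vdash n}\ell_1(\beta)=\sum_{v\ge 1}P(n-v)$ to reach the closed form $P(k-1)+P(k-2)-P(k)$ and then convert back with two generating-function identities --- both endgames are valid, and yours has the side benefit of exhibiting that cleaner intermediate expression.
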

\begin{proof} Letting $s_\lambda$ denote the Schur function indexed by a partition $\lambda$, we have $$g(\msquare_m,\msquare_m,(n-k,k-1,1)) = \left\langle s_{\msquare_m}, s_{(n-k,k-1,1)}\ast s_{\msquare_m} \right\rangle.$$
Observe that, by Pieri's rule (Theorem \ref{Pieri's rule}), we have \[s_{(n-k,k-1)}s_{(1)} =s_{(n-k,k-1,1)}+s_{(n-k+1,k-1)} +s_{(n-k,k)}.\]
Rewriting the above identity gives us that $g(\msquare_m,\msquare_m,(n-k,k-1,1))$ can be interpreted as \[\left \langle s_{\msquare_m}, (s_{(n-k,k-1)}s_{(1)})\ast s_{\msquare_m} \right \rangle - \left \langle s_{\msquare_m}, s_{(n-k+1,k-1)}\ast s_{\msquare_m} \right \rangle - \left \langle s_{\msquare_m}, s_{(n-k,k)}\ast s_{\msquare_m} \right \rangle.\]
We first note that the last two terms give two Kronecker coefficients $g(\msquare_m,\msquare_m,(n-k+1,k-1))$ and $g(\msquare_m,\msquare_m,(n-k,k))$. Notice that by Lemma \ref{numofpart}, we have 
$$g(\msquare_m,\msquare_m,(n-k+1,k-1))=P_{k-1}(m)-P_{k-2}(m)$$ and 
$$g(\msquare_m,\msquare_m,(n-k,k))=P_k(m)- P_{k-1}(m).$$ By Littlewood's Identity (Theorem \ref{littlewood}), \begin{align*}
    (s_{(n-k,k-1)}s_{(1)})\ast s_{\msquare_m} &= \sum_{\gamma \vdash {n-1}} c_{\gamma, (1)}^{\msquare_m}(s_{(n-k,k-1)}\ast s_\gamma)(s_{(1)}\ast s_{(1)})\\
    &= (s_{(n-k,k-1)}\ast s_{\lambda_m})(s_{(1)}),
\end{align*}
as $c_{\gamma,(1)}^{\msquare_m} =1$ if $\gamma = \lambda_m$ and $c_{\gamma,(1)}^{\msquare_m} =0$ for all the other partitions of size $n-1$. Taking inner product with $s_{\msquare_m}$ on both sides, we have 
\begin{align*} \left \langle s_{\msquare_m}, (s_{(n-k,k-1)}s_{(1)})\ast s_{\msquare_m} \right \rangle &= \left \langle s_{\msquare_m},(s_{(n-k,k-1)}\ast s_{\lambda_m})(s_{(1)}) \right \rangle\\
&= \left \langle s_{\msquare_m \setminus (1)},(s_{(n-k,k-1)}\ast s_{\lambda_m}) \right \rangle\\
&= \langle s_{\lambda_m}, s_{(n-k,k-1)}\ast s_{\lambda_m}\rangle.
\end{align*}
By Littlewood's Identity (\ref{littlewood}), Jacobi-Trudi Identity (\ref{jacobi}), together with Corollary \ref{mult-free}, $c_{\mu\beta}^{\lambda_m} \in \{0,1\}$, we have
\begin{align*}
    \left \langle s_{\lambda_m}, s_{(n-k,k-1)}\ast s_{\lambda_m} \right \rangle &= \sum_{\beta \vdash k-1}\sum_{\mu \vdash n-k} (c_{\mu\beta}^{\lambda_m})^2-\sum_{\alpha \vdash k-2}\sum_{\gamma \vdash n-k+1} (c_{\alpha\gamma}^{\lambda_m})^2 \\
    &= \sum_{\beta \vdash k-1}\sum_{\mu \vdash n-k} c_{\mu\beta}^{\lambda_m}-\sum_{\alpha \vdash k-2}\sum_{\gamma \vdash n-k+1} c_{\alpha\gamma}^{\lambda_m}.
\end{align*}
Putting the pieces together, we then have 
\begin{align*}
 &g(\msquare_m,\msquare_m,(n-k,k-1,1))\\
 =\,\,& \left \langle s_{\lambda_m}, s_{(n-k,k-1)}\ast s_{\lambda_m} \right \rangle - g(\msquare_m,\msquare_m,(n-k+1,k-1)) - g(\msquare_m,\msquare_m,(n-k,k))\\
 =\,\,& \langle s_{\lambda_m}, s_{(n-k,k-1)}\ast s_{\lambda_m}\rangle -(P_{k-1}(m)-P_{k-2}(m))-(P_k(m)- P_{k-1}(m))\\
 =\,\,&\sum_{\beta \vdash k-1}\sum_{\mu \vdash n-k} c_{\mu\beta}^{\lambda_m}-\sum_{\alpha \vdash k-2}\sum_{\gamma \vdash n-k+1} c_{\alpha\gamma}^{\lambda_m}-(P_k(m)- P_{k-2}(m))\\
 =\,\,& P_{k-1}(m)+\sum_{\beta \vdash k-1 
 }\ell_1(\beta)-\left(P_{k-2}(m)+ \sum_{\alpha \vdash k-2 }\ell_1(\alpha)\right)-(P_k(m)- P_{k-2}(m))\\
 =\,\,&\sum_{\beta \vdash k-1}\ell_1(\beta)-\sum_{\alpha \vdash k-2 }\ell_1(\alpha)-(P_k(m)-P_{k-1}(m))\\
 =\,\,&\sum_{\substack{\beta \vdash k-1\\ \beta_1=\beta_2}}\ell_1(\beta) +\left(\sum_{\substack{\beta \vdash k-1\\ \beta_1>\beta_2}}\ell_1(\beta) -\sum_{\alpha \vdash k-2 }\ell_1(\alpha)\right)-(P_k(m)-P_{k-1}(m))\\
 =\,\,&\sum_{\substack{\beta \vdash k-1\\ \beta_1=\beta_2}}\ell_1(\beta) + \sum_{\substack{\beta \vdash k-2\\ \beta_1 = \beta_2}} 1 -(P_k(m)-P_{k-1}(m))\\
  =\,\,&\sum_{\substack{\beta \vdash k-1\\ \beta_1=\beta_2}}\ell_1(\beta) -\left(P_k(m)-P_{k-1}(m) - \sum_{\substack{\beta \vdash k-2\\ \beta_1 = \beta_2}} 1 \right)\\
 =\,\,&\sum_{\substack{\alpha \vdash k-1\\ \alpha_1=\alpha_2}}  \ell_1(\alpha) - f(k).\end{align*}

\end{proof}

The following result, which provides a necessary and sufficient condition for a near two-row partition with a short second row to vanish in the tensor square of square partitions, follows from Theorem \ref{prop:zerocase}.
\begin{cor} \label{cor:zerocase} Let $2\le k \le m$. Then $g(\msquare_m,\msquare_m,(n-k,k-1,1)) = 0$ if and only if  $k \le 4$. \end{cor}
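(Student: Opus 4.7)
My plan is to apply the explicit formula of Proposition~\ref{prop:zerocase}, namely
\[
g(\msquare_m,\msquare_m,(n-k,k-1,1)) = A(k) - f(k), \qquad A(k) := \sum_{\substack{\alpha \vdash k-1 \\ \alpha_1 = \alpha_2}} \ell_1(\alpha),
\]
and verify that $A(k) = f(k)$ for $k \in \{2,3,4\}$ while $A(k) > f(k)$ for $5 \le k \le m$.

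For $k \le 4$ I would enumerate directly. When $k = 2$ there is no partition of $1$ with $\alpha_1 = \alpha_2$, and $f(2) = 0$, so both sides vanish. When $k \in \{3,4\}$, the only $\alpha \vdash k - 1$ with $\alpha_1 = \alpha_2$ is $(1^{k-1})$, contributing $\ell_1 = 1 = f(k)$ (where the $f$-term comes from the unique partition $(k)$).

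For the range $5 \le k \le m$, let $q(k)$ denote the number of partitions of $k$ with all parts $\ge 2$. I would use two reductions: first, the generating function identity $\prod_{i \ge 3}(1 - x^i)^{-1} = (1 - x^2)\prod_{i \ge 2}(1 - x^i)^{-1}$ yields $f(k) = q(k) - q(k-2)$; second, by conjugation, the number of partitions $\alpha \vdash k-1$ with $\alpha_1 = \alpha_2$ equals $q(k-1)$, so $A(k) \ge q(k-1)$ since each summand $\ell_1(\alpha) \ge 1$. Combining,
\[
A(k) - f(k) \ge q(k-1) + q(k-2) - q(k),
\]
so it suffices to show the right-hand side is positive. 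To that end I would construct an injection $\iota : q(k) \hookrightarrow q(k-1) \sqcup q(k-2)$: a partition $\lambda \vdash k$ (with parts $\ge 2$) whose last part is $2$ is sent to $\lambda \setminus \{2\} \in q(k-2)$; a $\lambda$ whose last part is $\ge 3$ is sent to the partition of $k-1$ obtained by decrementing the last part. A partition $\mu \in q(k-1)$ lies outside the image of $\iota$ exactly when $\mu$ has at least two parts with the last two equal. For $k \ge 5$ with $k \ne 6$, I would exhibit such a missed $\mu$ directly: take $\mu = ((k-1)/2, (k-1)/2)$ when $k$ is odd, and $\mu = (k-5, 2, 2)$ when $k \ge 8$ is even.

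The case $k = 6$ is genuinely exceptional because $q(5) + q(4) = q(6)$ there, so the generic bound only yields $A(6) - f(6) \ge 0$. I would sharpen this by observing that $\alpha = (2, 2, 1) \vdash 5$ contributes $\ell_1(\alpha) = 2$ rather than the trivial $1$, giving the refined lower bound $A(6) \ge q(5) + 1 = 3 > 2 = f(6)$. The main obstacle in the overall argument is precisely the clean isolation of this exceptional $k = 6$; once handled, the remaining range $k \ge 7$ is covered uniformly by the injection above.
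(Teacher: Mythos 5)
Your proposal is correct, and it follows the same overall strategy as the paper (apply Proposition~\ref{prop:zerocase}, check $k\le 4$ by hand, and prove strict inequality for $k\ge 5$ via an injection), but the combinatorial core is genuinely different. The paper injects the set $S$ of partitions of $k$ with all parts at least $3$ directly into the set $T$ of partitions of $k-1$ with $\alpha_1=\alpha_2$, by deleting a box from the last row and conjugating; since $\sum_{\alpha\in T}\ell_1(\alpha)\ge |T|\ge |S|=f(k)$ and one of the two inequalities is strict for each $k\ge 5$, positivity follows with no exceptional cases. You instead route everything through $q(k)$, the number of partitions with all parts at least $2$, using the conjugation identity $|T|=q(k-1)$ and the generating-function identity $f(k)=q(k)-q(k-2)$, and then prove $q(k)<q(k-1)+q(k-2)$ by an injection $Q(k)\hookrightarrow Q(k-1)\sqcup Q(k-2)$ that provably misses an element. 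The price of this detour is that the bound degenerates to equality at $k=6$ (where $q(5)+q(4)=q(6)$), forcing you to rescue that case by exhibiting the summand $\alpha=(2,2,1)$ with $\ell_1(\alpha)=2$; you identify and handle this correctly. What your version buys is a cleaner quantitative statement (an explicit lower bound $A(k)-f(k)\ge q(k-1)+q(k-2)-q(k)$, sharpened at $k=6$), whereas the paper's more direct injection is shorter and uniform in $k\ge 5$. Both arguments are valid; yours is complete and careful about the one tight case.
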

\begin{proof}
We can easily verify that $\sum_{\substack{\alpha \vdash k-1\\ \alpha_1=\alpha_2}}  \ell_1(\alpha) = f(k)$ for $k \in \{2,3,4\}$. Then by Proposition \ref{prop:zerocase}, we conclude that $g(\msquare_m,\msquare_m,(n-k,k-1,1)) = 0$ when $k \le 4.$

Next, we consider the case when $k \ge 5$. We can establish an injection from the set of all partitions of $k$ whose parts are at least $3$ to the set of partitions of $k-1$ whose first two parts are the same, that is from \[S = \{\beta \vdash k \mid \beta_i \notin \{1,2\} \text{ for all } i\}\] to \[T = \{\alpha \vdash k-1 \mid \alpha_1 = \alpha_2 \}.\] This injection is achieved by removing one box from the last row of $\beta \in S$ and taking the transpose. When $k\ge 5$, it follows that $\sum_{\substack{\alpha \vdash k-1\\ \alpha_1=\alpha_2}}  \ell_1(\alpha) > |T| \ge |S| = f(k)$. Hence, we conclude that $g(\msquare_m,\msquare_m,(n-k,k-1,1)) > 0$.
\end{proof}

\subsection{Hooks}
The following results on hook positivity are due to Ikenmeyer and Panova:
\begin{thm}[\cite{ikenmeyer2017rectangular}]\label{thm:hookpos} Let $b \ge 7.$ Assume that $m \ge b$.  We have $g((mb-k,1^k),b\times m, b\times m) > 0$ for $k \in [0,b^2-1]\setminus \{1,2,4,6,b^2-2,b^2-3,b
^2-5,b^2-7\}$ and is 0 for all other values of $k.$
\end{thm}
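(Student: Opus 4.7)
My plan is to derive the generating function identity
\[
\sum_{k\ge 0} g\bigl((mb-k,1^k),\,b\times m,\,b\times m\bigr)\,q^k \;=\; \prod_{j=1}^{b-1}\bigl(1+q^{2j+1}\bigr),
\]
and then read off which coefficients vanish. First I would expand the hook via Jacobi-Trudi (Theorem \ref{jacobi}) as $s_{(mb-k,1^k)} = \sum_{i=0}^{k}(-1)^i h_{mb-k+i}\,e_{k-i}$, so the target Kronecker coefficient becomes an alternating sum of inner products of the form $\langle h_a e_c,\, s_{b\times m}\ast s_{b\times m}\rangle$. Using self-adjointness of the Kronecker product, together with Littlewood's Identity (Theorem \ref{littlewood}) and the standard facts $s_{(a)}\ast s_\gamma = s_\gamma$ and $s_{(1^c)}\ast s_\delta = s_{\delta'}$ (valid when $|\gamma|=a$ and $|\delta|=c$), this inner product reduces to
\[
\langle h_a e_c,\, s_{b\times m}\ast s_{b\times m}\rangle = \sum_{\gamma\vdash a,\,\delta\vdash c} c^{b\times m}_{\gamma\delta}\,c^{b\times m}_{\gamma,\delta'}.
\]

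The rectangle-specific collapse is the crucial step: for $R = b\times m$ and $\gamma\subseteq R$, rotating the skew diagram $R/\gamma$ by $180^\circ$ yields a straight partition $\gamma^\ast$, so $s_{R/\gamma} = s_{\gamma^\ast}$ and hence $c^R_{\gamma\delta} = [\delta = \gamma^\ast]$. The double sum then collapses to the number of $\gamma\subseteq R$ with $|\gamma^\ast|=c$ and $\gamma^\ast$ self-conjugate; since $\eta = \eta'\subseteq b\times m$ with $b\le m$ forces $\eta\subseteq b\times b$, this equals $\mathrm{sc}(c,b)$, the number of self-conjugate partitions of $c$ fitting in a $b\times b$ square. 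Assembling,
\[
g\bigl((mb-k,1^k),\,b\times m,\,b\times m\bigr) = \sum_{i=0}^{k}(-1)^i\,\mathrm{sc}(k-i,b).
\]
The Frobenius-coordinate bijection identifies self-conjugate partitions inside $b\times b$ with subsets of $\{1,3,5,\dots,2b-1\}$ (the possible diagonal hook lengths), so $\sum_n \mathrm{sc}(n,b)\,q^n = \prod_{j=0}^{b-1}(1+q^{2j+1})$. Multiplying by $1/(1+q)=\sum_{i\ge 0}(-1)^iq^i$ cancels the $j=0$ factor and yields the advertised product. Consequently $[q^k]$ is the number of subsets of $\{3,5,7,\dots,2b-1\}$ summing to $k$, and the polynomial is palindromic of degree $b^2-1$.

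It remains to determine which coefficients vanish for $k\in[0,b^2-1]$. The sums $k=1,2,4,6$ are unattainable since the summands are odd and at least $3$ (indeed $4=3+1$ and $6=3+3$ both fail), while $k=0,3,5,7,8=3+5,9,10=3+7,11,12=3+9=5+7,13,\ldots$ are all attainable once $\{3,5,7,9,11,13\}$ is present, which holds because $b\ge 7$. By palindromy the high end $k\in\{b^2-7,b^2-5,b^2-3,b^2-2\}$ is unattainable as well. The main obstacle is ruling out additional interior gaps for every $b\ge 7$: I would handle the base case $b=7$ by directly expanding $(1+q^3)(1+q^5)(1+q^7)(1+q^9)(1+q^{11})(1+q^{13})$ and verifying its zero set on $[0,48]$ is exactly $\{1,2,4,6,42,44,46,47\}$, and then induct on $b$, noting that multiplying by $(1+q^{2b-1})$ only enlarges the support. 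The threshold $b\ge 7$ is sharp: for $b=6$ the polynomial $(1+q^3)(1+q^5)(1+q^7)(1+q^9)(1+q^{11})$ acquires extra interior gaps at $k=13$ and $k=22$, which is precisely why the hypothesis in the theorem cannot be weakened.
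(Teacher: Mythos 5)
The paper does not prove this statement at all --- it is imported verbatim from Ikenmeyer--Panova \cite{ikenmeyer2017rectangular} and used as a black box (e.g.\ in Corollary \ref{zerocor} and Proposition \ref{general statement}) --- so there is no internal proof to compare against. Judged on its own, your derivation is correct and is essentially the standard route to this result: the hook expansion $s_{(mb-k,1^k)}=\sum_i(-1)^ih_{mb-k+i}e_{k-i}$, the reduction via Littlewood's identity and $s_{(1^c)}\ast s_\delta=s_{\delta'}$ to $\sum_{\gamma,\delta}c^R_{\gamma\delta}c^R_{\gamma\delta'}$, the collapse $c^R_{\gamma\delta}=[\delta=\gamma^\ast]$ for a rectangle, and the Frobenius-coordinate count of self-conjugate partitions in $b\times b$ all check out (the hypothesis $m\ge b$ enters exactly where you use it, to force $\eta=\eta'\subseteq b\times b$), and the resulting product formula $\prod_{j=1}^{b-1}(1+q^{2j+1})$ reproduces the stated zero set, including the vanishing for $k\ge b^2$ and the sharpness at $b=6$.

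The one place your write-up is too thin is the inductive step for ruling out interior gaps: observing that multiplying by $(1+q^{2b+1})$ ``only enlarges the support'' shows that previously positive coefficients stay positive, but the statement for $b+1$ also requires positivity at the \emph{new} values $k\in\{b^2-7,b^2-5,b^2-3,b^2-2\}\cup[b^2,\,b^2+2b-8]$. You should add the explicit check: each such $k$ equals $(2b+1)+j$ with $j=k-(2b+1)\in[b^2-2b-8,\,b^2-9]\subseteq[7,\,b^2-9]$, a range on which the inductive hypothesis already guarantees representability, so these coefficients become positive; palindromy plus the persistence of the low-end gaps $\{1,2,4,6\}$ then pins down the top-end gaps. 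With that sentence added the argument is complete.
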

By Theorem \ref{thm:hookpos} and results in the previous section, we prove the forward direction of Conjecture \ref{conj:myconj}:
\begin{cor} \label{zerocor} For $m \ge 7$, $ g(\msquare_m, \msquare_m,\mu) = 0$ if $\mu \in S$ or $\mu' \in S$, where \[S = \{(m^2-3,2,1), (m^2-4,3,1), (m^2-j,1^j)\mid j \in \{1,2,4,6\}\}.\]\end{cor}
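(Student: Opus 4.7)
The plan is to assemble the corollary from two ingredients already at hand: the near two-row vanishing result Corollary \ref{cor:zerocase} and the hook vanishing theorem of Ikenmeyer--Panova (Theorem \ref{thm:hookpos}), linked by the self-conjugacy of the square partition.

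First I would record the elementary but crucial observation that $\msquare_m' = \msquare_m$. Combined with the transposition symmetry $g(\lambda,\mu,\nu) = g(\lambda',\mu',\nu')$ of Kronecker coefficients, this yields
\[
g(\msquare_m,\msquare_m,\mu) \;=\; g(\msquare_m',\msquare_m',\mu') \;=\; g(\msquare_m,\msquare_m,\mu'),
\]
so it is enough to prove $g(\msquare_m,\msquare_m,\mu)=0$ for each $\mu \in S$; the case $\mu' \in S$ follows immediately.

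Next I would dispatch the two near two-row members of $S$ by applying Corollary \ref{cor:zerocase}. For $\mu = (m^2-3,2,1)$ take $k=3$, and for $\mu = (m^2-4,3,1)$ take $k=4$. The hypothesis $2 \le k \le m$ is satisfied since $m \ge 7$, and $k \le 4$ forces the stated Kronecker coefficient to vanish.

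Finally, the four hook members $(m^2-j,1^j)$ with $j \in \{1,2,4,6\}$ are handled by Theorem \ref{thm:hookpos} with $b=m$. The hypothesis $b \ge 7$ is exactly $m \ge 7$, and $\{1,2,4,6\}$ sits inside the set $\{1,2,4,6,b^2-2,b^2-3,b^2-5,b^2-7\}$ on which the hook Kronecker coefficient vanishes. Since there is no genuine obstacle — the result is a direct repackaging of Corollary \ref{cor:zerocase} and Theorem \ref{thm:hookpos} via self-conjugacy — the only thing to be careful about is bookkeeping the index shifts (the $(n-k,k-1,1)$ notation in Corollary \ref{cor:zerocase} versus the $(m^2-3,2,1)$, $(m^2-4,3,1)$ notation used in $S$) and verifying that the indices $j$ land in the listed exceptional set of Theorem \ref{thm:hookpos}.
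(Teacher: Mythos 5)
Your proposal is correct and follows exactly the paper's route: the paper's proof of this corollary is simply the one-line observation that it follows from Theorem \ref{thm:hookpos} and Corollary \ref{cor:zerocase}, with the self-conjugacy of $\msquare_m$ and the transposition property handling the $\mu' \in S$ cases just as you describe. Your version merely spells out the index bookkeeping ($k=3,4$ in Corollary \ref{cor:zerocase} and $b=m$, $j\in\{1,2,4,6\}$ in Theorem \ref{thm:hookpos}) that the paper leaves implicit.
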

\begin{proof}
It follows directly from Theorem \ref{thm:hookpos} and Corollary \ref{cor:zerocase}.
\end{proof}
\section{Constituency of families of partitions of special shapes}\label{sec:constituency}
In this section, we will discuss the constituency of three families of special shapes in tensor squares of square partitions, including two-row partitions, near two-row partitions, and three-row partitions.

\subsection{Two-row partitions}
The following Theorem shown in \cite{pak14unimodality} is a generalization of Lemma \ref{numofpart} and it tells us how to compute the Kronecker coefficients of the form $g(m^l, m^l, (lm-k,k))$.
\begin{thm}[\cite{pak14unimodality}] \label{t:strict}

Let $n = lm$, $\tau_k = (n-k,k)$, where $0 \le k \le n/2$ and set $p_{-1}(l,m) = 0$. Then \[g(m^l, m^l, \tau_k) = p_k(l,m) - p_{k-1}(l,m).\]Furthermore, when $l,m \ge 8$, $g(m^l, m^l, \tau_k)  > 0$ when $k \ge 2.$
\end{thm}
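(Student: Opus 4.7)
The plan is to express the Kronecker pairing $g(m^l,m^l,\tau_k) = \langle\chi^{m^l}\chi^{m^l},\chi^{\tau_k}\rangle_{S_n}$ by writing the two-row character $\chi^{(n-k,k)}$ as a difference of permutation characters and applying Frobenius reciprocity. By Young's rule, for $a\ge b$ with $a+b=n$,
\[
\mathrm{Ind}_{S_a\times S_b}^{S_n}(\mathbf{1}) = \sum_{j=0}^{b}\chi^{(a+b-j,\,j)},
\]
so subtracting this identity for $(a,b)=(n-k,k)$ and $(a,b)=(n-k+1,k-1)$ telescopes to
\[
\chi^{(n-k,k)} \;=\; \mathrm{Ind}_{S_{n-k}\times S_k}^{S_n}(\mathbf{1})\;-\;\mathrm{Ind}_{S_{n-k+1}\times S_{k-1}}^{S_n}(\mathbf{1}),
\]
which is just the Jacobi-Trudi identity $s_{(n-k,k)}=h_{n-k}h_k - h_{n-k+1}h_{k-1}$ on the Frobenius side. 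Setting $A(a,b):=\langle\chi^{m^l}\chi^{m^l},\,\mathrm{Ind}_{S_a\times S_b}^{S_n}\mathbf{1}\rangle$, the problem reduces to computing $A(a,b)$ for $a+b=n$.

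To evaluate $A(a,b)$, I would apply Frobenius reciprocity together with the fact that $S_n$-characters are real:
\[
A(a,b) \;=\; \bigl\langle \mathrm{Res}^{S_n}_{S_a\times S_b}\chi^{m^l},\; \mathrm{Res}^{S_n}_{S_a\times S_b}\chi^{m^l}\bigr\rangle_{S_a\times S_b}.
\]
The branching rule expands the restriction as $\sum_{\mu\vdash a,\,\nu\vdash b}c_{\mu\nu}^{m^l}\,\chi^\mu\otimes\chi^\nu$. Since $m^l$ is a rectangle, for any $\mu\subseteq m^l$ the skew shape $m^l/\mu$ is just the $180^\circ$-rotation of a genuine partition $\mu^c$, so $s_{m^l/\mu}=s_{\mu^c}$ and each $c_{\mu\nu}^{m^l}\in\{0,1\}$, equalling $1$ iff $\mu\subseteq m^l$ and $\nu=\mu^c$. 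Orthonormality of the $\chi^\mu\otimes\chi^\nu$ then gives $A(a,b)=\#\{\mu\vdash a : \mu\subseteq m^l\}=p_a(l,m)$. Combined with the complement bijection $p_a(l,m)=p_{lm-a}(l,m)$, this yields
\[
g(m^l,m^l,\tau_k) \;=\; p_{n-k}(l,m)-p_{n-k+1}(l,m) \;=\; p_k(l,m)-p_{k-1}(l,m),
\]
establishing the formula.

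For the positivity clause, the problem reduces to showing $p_k(l,m) > p_{k-1}(l,m)$ throughout $2\le k\le lm/2$ whenever $l,m\ge 8$. Equivalently, the Gaussian binomial coefficient $\binom{l+m}{l}_q = \sum_k p_k(l,m)\,q^k$ must be \emph{strictly} unimodal past the second coefficient. This is exactly where I expect the main obstacle to lie: classical unimodality of $q$-binomials is old, but strict unimodality in this sharp range is considerably deeper and is the central content of the Pak-Panova paper being cited, established via a Kronecker/Schur positivity argument. Once invoked, it closes the positivity part immediately.
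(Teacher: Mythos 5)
Your derivation is correct and is essentially the standard argument: the paper itself gives no proof of this statement, importing it wholesale from the cited Pak--Panova reference, and your reconstruction (Jacobi--Trudi $s_{(n-k,k)}=h_{n-k}h_k-h_{n-k+1}h_{k-1}$, Frobenius reciprocity, and the multiplicity-freeness of restrictions of a rectangle, giving $A(a,b)=p_a(l,m)$) is exactly how that formula is obtained there and in earlier work of Vallejo. You also correctly locate the genuine content of the positivity clause in the strict unimodality of the Gaussian binomial $\binom{l+m}{l}_q$ for $l,m\ge 8$, which is the main theorem of the cited paper and must simply be invoked rather than re-proved.
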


\begin{cor}\label{cor:two-row square}
Let $m \ge 7$. For any $1 \le k \le m^2-2$,  $g(\msquare_m,\msquare_m,(m^2-k,k))>0$.
\end{cor}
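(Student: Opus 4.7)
The plan is to convert the positivity statement into a strict unimodality statement for a Gaussian polynomial and then invoke Theorem~\ref{t:strict}. By Lemma~\ref{numofpart},
\[
g(\msquare_m,\msquare_m,(m^2-k,k)) \;=\; P_k(m) - P_{k-1}(m),
\]
where $P_k(m)=p_k(m,m)$ is the $k$-th coefficient of the $q$-binomial $\binom{2m}{m}_q$. Since $(m^2-k,k)$ is a genuine two-row partition only for $0\le k\le \lfloor m^2/2\rfloor$, and since the $q$-binomial is symmetric about its middle degree, the positivity claim reduces to the strict inequality $P_{k-1}(m)<P_k(m)$ on the range $2\le k\le \lfloor m^2/2\rfloor$. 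The case $k=1$ is automatically omitted since $P_0(m)=P_1(m)=1$, and this matches the known vanishing $g(\msquare_m,\msquare_m,(m^2-1,1))=0$ from Corollary~\ref{zerocor}.

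For $m\ge 8$, this strict unimodality is exactly what Theorem~\ref{t:strict} yields when specialized to $l=m$, so the positivity is immediate in this range. Applying the symmetry $P_k(m) = P_{m^2-k}(m)$ (equivalently, the transpose symmetry of the Kronecker coefficients with respect to the square shape) extends the conclusion to all $k$ for which $(m^2-k,k)$, interpreted as a partition, falls in the allowed range.

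The only remaining work is the boundary case $m=7$, which lies just outside the hypothesis $l,m\ge 8$ of Theorem~\ref{t:strict}. I would treat it by a direct finite verification: $\binom{14}{7}_q$ has only $50$ coefficients, and one can list $P_0(7),P_1(7),\ldots,P_{24}(7)$ and confirm $P_k(7)>P_{k-1}(7)$ for $2\le k\le 24$ by inspection, or alternatively appeal to a sharpened version of the Pak--Panova strict unimodality result that covers this boundary.

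The main obstacle, to the extent there is one, is the single boundary value $m=7$, since the $m\ge 8$ portion of the statement is essentially a direct invocation of Theorem~\ref{t:strict}. This obstacle is purely computational rather than structural, so I do not anticipate any conceptual difficulty.
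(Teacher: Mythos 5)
Your argument is correct and essentially identical to the paper's: both reduce to the formula $g(\msquare_m,\msquare_m,(m^2-k,k))=P_k(m)-P_{k-1}(m)$ and invoke strict unimodality of the $q$-binomial coefficients (Theorem~\ref{t:strict}) for $m\ge 8$, with a direct finite verification for the boundary case $m=7$. Your observation that $k=1$ must be excluded (since $P_1(m)=P_0(m)$ forces $g(\msquare_m,\msquare_m,(m^2-1,1))=0$) is a legitimate correction to the stated range of the corollary, consistent with Corollary~\ref{zerocor}.
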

\begin{proof}
By direct computation using the formula in Theorem \ref{t:strict}, we can verify that the statement holds for $m = 7$. By strict unimodality of $q$-binomial coefficients as shown in \cite{Pak_2013}, we can obtain positivity of the Kronecker coefficients of the form $g(\msquare_m,\msquare_m,(m^2-k,k))$ for every $m\ge 8.$ \end{proof}

\subsection{Near two-row partitions}

We will first consider the occurrences of near two-row partitions $(m^2-k,k-1,1)$ with a second row longer than $m-1$. The following is one of our main results and is proven by considering different cases depending on different values of $k$ and the parity of $m$.
\begin{thm}\label{near two-row}
Let $m$ be an integer. For every $m\ge 5$, $g(\msquare_m,\msquare_m,(m^2-k,k-1,1))> 0$ if and only if  $k\ge 5.$
\end{thm}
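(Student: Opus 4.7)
The \emph{only if} direction (vanishing for $k\le 4$) is exactly Corollary~\ref{cor:zerocase}, and the subrange $5\le k\le m$ of the \emph{if} direction is also Corollary~\ref{cor:zerocase}. My plan for the remaining range $m<k\le \lfloor(m^2+1)/2\rfloor$ is to push the derivation of Proposition~\ref{prop:zerocase} through for arbitrary admissible $k$.

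The key identity
\[
g(\msquare_m,\msquare_m,(m^2-k,k-1,1)) \,=\, \langle s_{\lambda_m},\, s_{(m^2-k,k-1)} \ast s_{\lambda_m}\rangle \,-\, \bigl(P_k(m) - P_{k-2}(m)\bigr)
\]
uses only Pieri's rule, Littlewood's identity, and the fact that $c_{\gamma,(1)}^{\msquare_m} = [\gamma=\lambda_m]$, and so remains valid for every admissible $k$ (with a cosmetic modification at the symmetric endpoint $k=(m^2+1)/2$, possible only for odd $m$, where Pieri gives two rather than three terms). Combining Jacobi--Trudi, a second application of Littlewood's identity, and the multiplicity-freeness of Corollary~\ref{mult-free} (which is unconditional in $k$) yields the extended formula
\[
g \;=\; \sum_{\substack{\beta\vdash k-1\\ \beta\subseteq\lambda_m}}\ell_1(\beta^\ast) \;-\; \sum_{\substack{\alpha\vdash k-2\\ \alpha\subseteq\lambda_m}}\ell_1(\alpha^\ast) \;-\; \bigl(P_k(m) - P_{k-2}(m)\bigr),
\]
where $\beta^\ast$ denotes the complement of $\beta$ inside $\msquare_m$. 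For $k\le m$ this collapses to the formula of Proposition~\ref{prop:zerocase} because every such $\beta$ satisfies $\beta_1<m$ and $\ell(\beta)<m$, forcing $\ell_1(\beta^\ast)=\ell_1(\beta)+1$. For $k>m$, partitions $\beta$ can touch the right edge ($\beta_1=m$), the bottom edge ($\ell(\beta)=m$), or both, each of which drops $\ell_1(\beta^\ast)$ by one or two; one must also exclude those $\beta$ failing to fit inside $\lambda_m$. I would split into subcases by how $k$ compares to $m$, $2m$, and $m^2/2$, and by the parity of $m$ (needed because self-complementary shapes inside $\msquare_m$ and the growth of $P_k(m)$ near the midpoint have parity-dependent behavior); in each subcase I would enumerate the boundary contributions explicitly and show that the bulk positive term $\sum_{\alpha\vdash k-1,\,\alpha_1=\alpha_2}\ell_1(\alpha)-f(k)$, strictly positive for $k\ge 5$ by the proof of Corollary~\ref{cor:zerocase}, dominates the boundary corrections together with the change in $P_k(m)-P_{k-2}(m)$ relative to its $k\le m$ value.

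\textbf{Main obstacle.} The delicate combinatorial bookkeeping when $k$ is close to $m^2/2$: many $\beta\subseteq\lambda_m$ are nearly self-complementary, so the ``$-1$'' and ``$-2$'' corrections to $\ell_1(\beta^\ast)$ accumulate, while the bulk positive term grows more slowly. This is precisely where the parity of $m$ enters sharpest, since self-complementary partitions of $\msquare_m$ exist only for certain parities and $P_k(m)$ behaves differently for even versus odd $m$ near the midpoint; I expect this subcase to require a separate direct enumeration rather than an asymptotic bound.
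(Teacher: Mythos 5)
Your reduction is set up correctly: the identity
\[
g(\msquare_m,\msquare_m,(m^2-k,k-1,1))=\sum_{\substack{\beta\vdash k-1\\ \beta\subseteq\lambda_m}}\ell_1(\beta^\ast)-\sum_{\substack{\alpha\vdash k-2\\ \alpha\subseteq\lambda_m}}\ell_1(\alpha^\ast)-\bigl(P_k(m)-P_{k-2}(m)\bigr)
\]
does hold for all admissible $k$ (with the two-term Pieri expansion at $k=(m^2+1)/2$), since Corollary~\ref{mult-free} and the complementation argument are independent of $k$; note also that the exclusion of $\beta\not\subseteq\lambda_m$ you worry about is vacuous, because a partition of $k-1<m^2$ contained in $\msquare_m$ is automatically contained in $(m^{m-1},m-1)$. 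Counting the pairs $(\beta,\beta+\square)$ by the larger partition, your formula is equivalent to
\[
\sum_{\substack{\gamma\vdash k\\ \gamma\subseteq\msquare_m}}\ell_1(\gamma)-\sum_{\substack{\gamma\vdash k-1\\ \gamma\subseteq\msquare_m}}\ell_1(\gamma)\;>\;P_k(m)-P_{k-2}(m).
\]

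The genuine gap is that this inequality \emph{is} the theorem for $k>m$, and your plan defers exactly this step to unspecified ``delicate bookkeeping.'' Near $k\approx m^2/2$ both sides are second-order differences of nearly symmetric quantities (under $\gamma\mapsto\gamma^\ast$ the left-hand sums almost cancel, and the right-hand side is the consecutive difference of the $q$-binomial coefficient, whose strict positivity near the midpoint is itself the hard Pak--Panova strict-unimodality theorem, proved \emph{via} Kronecker coefficients rather than by direct enumeration). So there is no reason to expect a case analysis ``by how $k$ compares to $m$, $2m$, and $m^2/2$'' to close; you have identified the obstacle but not supplied an idea that overcomes it. The paper avoids this entirely for $k>m$: it abandons the exact formula and instead uses the semigroup property, decomposing $\msquare_m$ into smaller rectangles (e.g.\ $\msquare_m=(\msquare_{m-2}+_V(m-2,m-2))+_H(2^m)$) and $(m^2-k,k-1,1)$ into a horizontal sum of a smaller near-two-row partition and two-row partitions with parts of controlled parity (via Theorem~\ref{thm:nn} and Theorem~\ref{t:strict}), with induction on $m$, separate treatment of even and odd $m$, explicit handling of the extreme cases $k=\tfrac{m^2\pm1}{2}$ (Lemmas~\ref{lem:kk1} and~\ref{lem:almostkk1}), and computer-verified base cases. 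If you want to complete your route, you would need an independent proof of the displayed inequality for $m<k\le(m^2+1)/2$, which appears to be at least as hard as the statement you are trying to prove.
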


The following is a well-known result on tensor square of $2\times n$ rectangles from \cite{10.36045/bbms/1103408635}:
\begin{thm}[\cite{10.36045/bbms/1103408635}]\label{thm:nn}
The Kronecker coefficient $g((n,n),(n,n),\mu)>0$ if and only if either $\ell(\mu) \le 4$ and all parts even or $\ell(\mu) = 4$ and all parts odd.
\end{thm}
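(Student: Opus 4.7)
The plan is to derive an explicit signed formula for $g((n,n),(n,n),\mu)$ via Jacobi--Trudi and Littlewood's identity, then characterize when this formula yields a positive value.

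The first step is a reduction. By the Jacobi--Trudi identity (Theorem~\ref{jacobi}) we have $s_{(n,n)} = h_n h_n - h_{n+1} h_{n-1}$. Applying Littlewood's identity (Theorem~\ref{littlewood}) to $(h_a h_b) \ast s_{(n,n)}$, and using the basic fact that $h_k \ast s_\eta = s_\eta$ for every $\eta \vdash k$ (because $h_k = s_{(k)}$ is the trivial character of $S_k$), I obtain
\begin{align*}
g((n,n),(n,n),\mu) \;=\; \sum_{\gamma,\delta\vdash n} c^{(n,n)}_{\gamma\delta}\, c^\mu_{\gamma\delta}\;-\;\sum_{\substack{\gamma\vdash n-1\\ \delta\vdash n+1}} c^{(n,n)}_{\gamma\delta}\, c^\mu_{\gamma\delta}.
\end{align*}
Since the target $(n,n)$ has only two parts, the Littlewood--Richardson rule forces both $\gamma$ and $\delta$ to have at most two parts in every nonzero term, and each $c^{(n,n)}_{\gamma\delta}$ lies in $\{0,1\}$. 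This already gives the necessary bound $\ell(\mu) \le \ell(\gamma)+\ell(\delta) \le 4$, handling one half of the forward direction.

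For the parity condition, I would construct a sign-reversing involution on the disjoint union of the two index sets above, obtained by moving a single corner box between the two rows of $\gamma$ or between $\gamma$ and $\delta$. Such a box-shift converts a pair of sizes $(n,n)$ into a pair of sizes $(n-1,n+1)$ while preserving $c^\mu_{\gamma\delta}$, so matched pairs cancel. The surviving fixed points are exactly the diagonal pairs $(\gamma,\gamma)$ with $\gamma \vdash n$ and $\ell(\gamma) \le 2$: via the two-row Littlewood--Richardson rule these force $\mu$ to have all even parts with $\ell(\mu) \le 4$. The only unpaired terms in the second sum correspond to specific symmetric configurations that yield the exceptional family $\ell(\mu)=4$ with all parts odd (which requires $n$ even by the parity of $\sum \mu_i = 2n$).

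For the sufficient direction I would construct explicit contributing pairs. When $\mu = 2\nu$ with $\ell(\nu)\le 2$, the pair $(\gamma,\delta)=(\nu,\nu)$ contributes $c^{(n,n)}_{\nu\nu}\, c^\mu_{\nu\nu} = 1$ and is not cancelled. When $\mu=2\nu$ with $\ell(\nu)\in\{3,4\}$, I decompose $\nu$ into two two-row partitions and use the 2-row LR rule to assemble a matching pair $(\gamma,\delta)$ of two-row partitions. When $\ell(\mu)=4$ with all parts odd, I exhibit a specific pair in the $(n-1,n+1)$-family whose contribution survives cancellation. The main obstacle is precisely specifying the sign-reversing involution in the previous paragraph and proving it is well defined wherever both LR constraints hold simultaneously; the asymmetry between the two index sets (sizes $n-1,n+1$ versus $n,n$) is exactly what generates the exceptional all-odd four-part case, and teasing this out from the generic cancellation requires careful parity bookkeeping on the border-strip data underlying the LR coefficients $c^{(n,n)}_{\gamma\delta}$.
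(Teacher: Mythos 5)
First, a point of comparison: the paper does not prove Theorem~\ref{thm:nn} at all --- it quotes it from Remmel--Whitehead \cite{10.36045/bbms/1103408635} --- so your proposal has to stand on its own. Its first step does: Jacobi--Trudi gives $s_{(n,n)} = h_n^2 - h_{n+1}h_{n-1}$, and Littlewood's identity (Theorem~\ref{littlewood}) with $h_k \ast s_\eta = s_\eta$ yields
\begin{equation*}
g((n,n),(n,n),\mu) \;=\; \sum_{\gamma,\delta\vdash n} c^{(n,n)}_{\gamma\delta}\, c^\mu_{\gamma\delta}\;-\;\sum_{\substack{\gamma\vdash n-1\\ \delta\vdash n+1}} c^{(n,n)}_{\gamma\delta}\, c^\mu_{\gamma\delta},
\end{equation*}
and since $c^{(n,n)}_{\gamma\delta}\neq 0$ forces $\gamma,\delta$ to have at most two rows, the bound $\ell(\mu)\le 4$ follows. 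This is the same kind of reduction the paper itself uses elsewhere (Proposition~\ref{prop:zerocase}).

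The gap is everything after that: the sign-reversing involution is never constructed, and its claimed properties are provably false, so the argument cannot be repaired as stated. Take $n=3$, $\mu=(3,2,1)$. The diagonal pair $(\gamma,\delta)=((2,1),(2,1))$ has $c^{(3,3)}_{(2,1),(2,1)}=1$ and $c^{(3,2,1)}_{(2,1),(2,1)}=2$, so it contributes $2$ to the positive sum, yet $g((3,3),(3,3),(3,2,1))=0$ (as the theorem demands, since $(3,2,1)$ is neither all even nor four odd parts); the cancellation comes from the two pairs $((2),(3,1))$ and $((1,1),(2,2))$ in the negative sum, each contributing $1$. This refutes your fixed-point analysis three times over: (i) diagonal pairs are not uncancelled fixed points; (ii) $c^\mu_{\gamma\gamma}\neq 0$ does not force $\mu$ even --- indeed for $\mu=(3,1,1,1)$, all parts odd, the \emph{only} surviving term is the diagonal pair $((2,1),(2,1))$, so the all-odd family lives in the positive sum, not among ``unpaired terms in the second sum,'' which carry a negative sign and could never produce positivity; and (iii) no involution on index pairs preserving $c^\mu_{\gamma\delta}$ can exist, because here one pair of weight $2$ must cancel two pairs of weight $1$ --- since LR coefficients into four-row targets exceed $1$, any cancellation argument must operate on individual LR tableaux (or a finer combinatorial model), not on pairs $(\gamma,\delta)$. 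Your parenthetical that the all-odd case requires $n$ even is also false ($n=3$, $\mu=(3,1,1,1)$). Finally, the sufficiency direction inherits the same hole: showing the pair $(\nu,\nu)$ ``is not cancelled'' is precisely the content of the missing involution, so positivity is never actually established.
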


When $m$ is even and $5 \le k \le \frac{m^2}{2}$, we decompose $\msquare_{m}$ as $\msquare_{m} = (\msquare_{m-2} +_V (m-2, m-2)) +_H (2^{m})$. We can find a horizontal decomposition $(m^2-k,k-1,1) = \mu^1+_H \mu^2 +_H\mu^3$ where $\mu^1$ is a three-row partition with the second row longer than 4 and the third row equal to 1, and $\mu^2$ and $\mu^3$ are partitions of $2m$ and $2m-4$ with all parts even. Then by induction and semigroup property, we have:
\begin{prop} \label{prop:even} For every even number $m \ge 6$, $g(\msquare_m,\msquare_m,(m^2-k,k-1,1))> 0$ for every $m+1 \le k \le \frac{m^2}{2}$.
\end{prop}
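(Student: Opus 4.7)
My plan is to prove Proposition~\ref{prop:even} by induction on even $m\ge 6$, using the decomposition $\msquare_m = (\msquare_{m-2} +_V (m-2,m-2)) +_H (2^m)$ indicated by the author together with the semigroup property (Theorem~\ref{semigroup} and its vertical-sum corollary). Concretely, I want to split the target partition as $(m^2-k,k-1,1) = \nu^1 +_H \nu^3 +_H \nu^2$, where $\nu^1$ is paired with $\msquare_{m-2}$, $\nu^3$ with $(m-2,m-2)$, and $\nu^2$ with $(2^m)$, and then verify that each of the three component Kronecker coefficients is positive.

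The shape of the decomposition is essentially forced: since the target has third row equal to $1$ and $\nu^2,\nu^3$ must be paired with two-row rectangles, I set $\nu^1=((m-2)^2-k',\,k'-1,\,1)$, $\nu^3=(2m-4-2b,\,2b)$, and $\nu^2=(2m-2a,\,2a)$ with $k' = k-2(a+b)$; row-by-row addition then produces $(m^2-k,k-1,1)$ exactly. I would then establish the three needed positivity facts. The coefficient $g(\msquare_{m-2},\msquare_{m-2},\nu^1)>0$ follows from Corollary~\ref{cor:zerocase} when $5\le k'\le m-2$ and from the inductive hypothesis when $m-1\le k'\le (m-2)^2/2$, so together these cover every $k'\in[5,(m-2)^2/2]$. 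Both $g((m-2,m-2),(m-2,m-2),\nu^3)>0$ and $g((2^m),(2^m),\nu^2)>0$ follow from Theorem~\ref{thm:nn} (using the double-conjugation identity $g((2^m),(2^m),\nu^2)=g((m,m),(m,m),\nu^2)$), since $\nu^2$ and $\nu^3$ are two-row partitions with all parts even.

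The remaining step is arithmetic: for every $k\in[m+1,m^2/2]$, I must choose $0\le a\le m/2$ and $0\le b\le (m-2)/2$ so that $k'=k-2(a+b)$ lies in $[5,(m-2)^2/2]$. Writing $s=2(a+b)$, this requires $s$ to lie in the interval $[\max(0,\,k-(m-2)^2/2),\,\min(2m-2,\,k-5)]$, and direct checking of the endpoint inequalities (using $k\le m^2/2$ and $m\ge 6$) shows this interval is always nonempty; the parity constraint $k'\equiv k\pmod 2$ holds automatically because $s$ is even. Once $s$ is fixed, splitting it as $s=2a+2b$ with $a,b$ in the allowed ranges is routine since $s\le 2m-2 = m+(m-2)$.

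The main obstacle is the base case $m=6$: the inductive hypothesis is vacuous, and Corollary~\ref{cor:zerocase} with $m'=m-2=4$ gives positivity only in the empty range $[5,4]$, so the reduction above produces no useful $\nu^1$. I would therefore handle $m=6$ separately, either by finite verification of the twelve cases $k\in\{7,\ldots,18\}$ via Schur-function computer algebra, or by finding an alternative decomposition of $\msquare_6$ (for example $\msquare_6=(3^6)+_H(3^6)$ or $\msquare_6=(4^6)+_H(2^6)$) and combining Theorem~\ref{thm:nn} with whichever direct positivity results for $6\times 3$ or $6\times 4$ rectangles are available. This is the one place where the clean structural argument breaks down, and absorbing $m=6$ into a finite check is the most delicate part of the writeup.
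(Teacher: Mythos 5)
Your proposal follows essentially the same route as the paper's own proof: induction on even $m$ with the base case $m=6$ verified computationally, the decomposition $\msquare_m = (\msquare_{m-2}+_V(m-2,m-2))+_H(2^m)$, Theorem~\ref{thm:nn} for the two two-row rectangular factors, Corollary~\ref{cor:zerocase} combined with the inductive hypothesis to cover the full range $k'\in[5,(m-2)^2/2]$ for the $\msquare_{m-2}$ factor, and a feasibility argument for the parameters. The only cosmetic difference is that the paper exhibits explicit feasible values of $a$ and $b$ in each regime of $k$, whereas you argue that the interval of admissible even $s=2(a+b)$ is nonempty; both verifications amount to the same arithmetic.
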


\begin{proof}
For an even integer $m\ge 6$, we can write $m = 2r$ where $r\ge 3$. We shall proceed by induction on $r$. Based on computational evidence, we observe that $g(\msquare_6,\msquare_6,(6^2-k,k-1,1))> 0$ for every $7 \le k \le 18$. 

Let $r \ge 4$. Assume the inductive hypothesis that $g(\msquare_{2(r-1)},\msquare_{2(r-1)},(4(r-1)^2-i,i-1,1))$ for any $2r-1 \le i \le 2(r-1)^2$.
Let $2(r+1)\le k \le 2r^2.$ We can decompose the square partition with side length $2r$ as follows:  \[\msquare_{2r} = \left (\msquare_{2(r-1)} +_V (2r-2, 2r-2)\right ) +_H (2^{2r}).\]

Note that by Theorem \ref{thm:nn} and the transposition property of Kronecker coefficients, we obtain that $g((2^{2r}),(2^{2r}), (2(r+a),2(r-a))) > 0 $ for any $0\le a\le r$,  and $g((2r-2,2r-2),(2r-2,2r-2), (2(r-1+b),2(r-1-b)))>0$ for any $0\le b\le r-1.$

Consider the following system of inequalities:
\[
\begin{cases}
4r^2-k-2(r+a) \ge k-1-2(r-a)\\
4r^2-k-2(r+a)-2(r-1+b) \ge k-1-2(r-a)-2(r-1-b) \ge 1\\
k-1-2(r-a)-2(r-1-b) \ge 5
\end{cases}.
\]
Suppose that $0\le a\le r$, $0\le b\le r-1$ is a pair of solutions to the system. We define partition $\alpha(a,b):=(4r^2-k-2(r+a)-2(r-1+b), k-1-2(r-a)-2(r-1-b),1)$. By inductive hypothesis, together with Corollary \ref{cor:zerocase}, $g(\msquare_{2(r-1)},\msquare_{2(r-1)}, \alpha(a,b)) > 0.$
Note that we can decompose the near two-row partition as \[(4r^2-k,k-1,1) = \alpha(a,b) +_H(2(r-1+b),2(r-1-b)+_H (2(r+a),2(r-a)).\] Then by semigroup property (Theorem \ref{semigroup}), $g\left (\msquare_{2r},\msquare_{2r},(4r^2-k,k-1,1)\right)> 0$. By the Principle of Mathematical Induction, the statement holds for all even $m \ge 6.$ 

Hence, it suffices to show the system of inequalities has integral solutions $0\le a\le r,0\le b\le r-1$. By simplifying and rearranging, we can further reduce this system of inequalities to:
\[
\begin{cases} 
a \le r^2-\frac{k}{2}+\frac{1}{4}\\
2r+2-\frac{k}{2} \le a+b \le r^2-\frac{k}{2}+\frac{1}{4}.
\end{cases}
\]
Notice that when $k \le \frac{(2r-1)^2}{2}$, the values $a = r$ and $b = \max\{ \lceil r+2-\frac{k}{2}\rceil, 0\}$ provide a feasible solution to the system. When $\frac{(2r-1)^2}{2} \le k \le 2r^2$, the values $a = \lfloor r^2-\frac{k}{2}+\frac{1}{4}\rfloor$ and $b = 0$ provide a feasible solution to the system.
\end{proof}

\begin{ex} Let $m = 6$ and $k = 10$. Diagrams below illustrate a way to decompose partitions $\msquare_6$ and $(26,9,1)$. Since $g(\msquare_4, \msquare_4, (8,7,1))> 0$, $g(2^6, 2^6, (10,2))>0$ by Theorem \ref{thm:nn} and $g((4,4),(4,4),(8))>0$, we conclude that $g(\msquare_6,\msquare_6,(26,9,1))> 0$ by semigroup property.
\ytableausetup{boxsize = 8px}
\begin{align*}
\\
\ydiagram{6,6,6,6,6,6} \hspace{.5cm} &\ydiagram{26,9,1} \\
\\
\ydiagram{4,4,4,4,0,4,4} \ydiagram{1+2,1+2,1+2,1+2,1+2,1+2} \hspace{.5cm}&\ydiagram{8,7,1} \ydiagram{1+10,1+2}\ydiagram{1+8}
\end{align*}
\end{ex}

We will next prove the positivity of $g(\msquare_m, \msquare_m, (m^2-k,k-1,1))$ when $m$ is odd using the semigroup property. 
\begin{prop} \label{prop:short2ndrow} For every odd integer $m \ge 7$ and $k\ge 5$ such that $k\le \frac{(m-1)^2+1}{2}$, $g(\msquare_m,\msquare_m,(m^2-k,k-1,1))> 0$.
\end{prop}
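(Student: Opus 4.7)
The plan is to prove Proposition \ref{prop:short2ndrow} by induction on the odd integer $m \geq 7$, in close analogy with the even case (Proposition \ref{prop:even}). The key observation is that the decomposition
\[
\msquare_m = \bigl(\msquare_{m-2} +_V (m-2, m-2)\bigr) +_H (2^m)
\]
is parity-insensitive: $\msquare_{m-2} +_V (m-2, m-2) = (m-2)^m$ and $(m-2)^m +_H (2^m) = m^m$ regardless of the parity of $m$. The base case $m = 7$ is verified directly for each of the $14$ values $k \in \{5, 6, \dots, 18\}$ by computing $g(\msquare_7, \msquare_7, (49-k, k-1, 1))$ (e.g., via computer algebra). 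Alternatively, one may take $m = 5$ as the base: compute four coefficients $g(\msquare_5, \msquare_5, (25-k, k-1, 1))$ for $k \in \{5,\dots,8\}$, then bootstrap up to $m = 7$ via the inductive step.

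For the inductive step with $m = 2r + 1 \geq 9$ and $5 \leq k \leq 2r^2 = \lfloor ((m-1)^2+1)/2 \rfloor$, I would introduce nonnegative integer parameters $a_1 \geq a_2$ with $a_1 + a_2 = m$ and $c_1 \geq c_2$ with $c_1 + c_2 = m - 2$. Setting $s := a_2 + c_2$ and $k' := k - 2s$, define
\[
\alpha := \bigl((m-2)^2 - k',\, k' - 1,\, 1\bigr), \quad \beta := (2c_1, 2c_2), \quad \gamma := (2a_1, 2a_2).
\]
A direct check confirms $\alpha +_H \beta +_H \gamma = (m^2 - k, k - 1, 1)$. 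Theorem \ref{thm:nn}, combined with the Kronecker transposition identity $g(\lambda, \mu, \nu) = g(\lambda', \mu', \nu)$, yields $g((2^m), (2^m), \gamma) > 0$ and $g((m-2, m-2), (m-2, m-2), \beta) > 0$, since both $\beta$ and $\gamma$ have length at most $2$ with all parts even. The inductive hypothesis supplies $g(\msquare_{m-2}, \msquare_{m-2}, \alpha) > 0$ whenever $5 \leq k' \leq 2(r-1)^2$.

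The constraints that $\alpha$ is a valid partition ($\alpha_1 \geq \alpha_2 \geq 1$), that the inductive hypothesis apply, and the parametric ranges $0 \leq a_2 \leq r$, $0 \leq c_2 \leq r - 1$ reduce to a single interval condition on $s$. A short case analysis on $k$ confirms the interval is nonempty for every $5 \leq k \leq 2r^2$: at the extremal $k = 2r^2$ one is forced to take $s = 2r - 1$ (so $a_2 = r$, $c_2 = r - 1$), and at $k = 5$ one takes $s = 0$. Positivity of $g(\msquare_m, \msquare_m, (m^2 - k, k - 1, 1))$ then follows from two applications of the semigroup property: first the modified version (the Corollary following Theorem \ref{semigroup}) combines $\msquare_{m-2}$ with $(m-2, m-2)$ via $+_V$ while combining $\alpha$ with $\beta$ via $+_H$, and then the standard version (Theorem \ref{semigroup}) attaches $(2^m)$ and $\gamma$ via $+_H$.

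The main obstacle is the base case, since the inductive decomposition cannot initiate the induction on its own and one must evaluate the base Kronecker coefficients directly. A secondary technical point, compared to the even proof, is that both sums $a_1 + a_2 = m$ and $c_1 + c_2 = m - 2$ are odd, precluding the symmetric parametrization around midpoints used in Proposition \ref{prop:even}; the feasibility analysis must carry an extra unit of asymmetry between $a_1$ and $a_2$ (and between $c_1$ and $c_2$), though the qualitative structure is otherwise identical.
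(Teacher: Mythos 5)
Your proposal is correct, but it takes a genuinely different route from the paper. The paper does not induct on odd $m$ at all: it reduces the odd case in one step to the even case by peeling off an L-shaped strip, writing $\msquare_m = (\msquare_{m-1} +_V (m-1)) +_H (1^m)$ and $(m^2-k,k-1,1) = ((m-1)^2-k,\, k-1,\, 1) +_H (m-1) +_H (m)$, then invoking Proposition \ref{prop:even} for the square of even side $m-1$ together with the trivial positivity of $g((m-1),(m-1),(m-1))$ and $g(1^m,1^m,(m))$; the hypothesis $k \le \frac{(m-1)^2+1}{2}$ is exactly what makes the first row long enough ($m^2-k-(k-1) \ge 2m-1$) for this strip removal. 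Your route instead runs the odd analogue of the proof of Proposition \ref{prop:even}: a separate induction $m \to m-2$ via $\msquare_m = (\msquare_{m-2} +_V (m-2,m-2)) +_H (2^m)$, Theorem \ref{thm:nn} for the two-row rectangles, and a feasibility analysis for the parameter $s$. Your decomposition checks out (the identity $\alpha +_H \beta +_H \gamma = (m^2-k,k-1,1)$ and the interval analysis for $s$ are both correct, and the interval is indeed nonempty for all $5 \le k \le 2r^2$), at the cost of a computer-verified base case at $m=7$ and the bookkeeping of the extra unit of asymmetry you note. What your approach buys is uniformity at the low end: the paper's reduction hands off to Proposition \ref{prop:even}, whose stated range begins at $k \ge (m-1)+1 = m$, so the cases $5 \le k \le m-1$ are not literally covered by the cited proposition and would need a separate argument; your induction covers the full range $5 \le k \le \frac{(m-1)^2}{2}$ in one pass. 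What the paper's approach buys is brevity and the absence of any new induction or base-case computation.
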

\begin{proof}
Let $m\ge 7$ and $k \ge 5$. Note that when $k\le \frac{(m-1)^2+1}{2}$, we have $(m^2-k)-(k-1) \ge 2m-1$ and we can consider the decompositions \[(m^2-k, k-1,1) = (m^2-k-2m+1, k-1, 1) +_H(m-1) +_H (m)\] and \[\msquare_m = (\msquare_{m-1}+_V(m-1) )+_H (1^m).\] Then by semigroup property and Proposition \ref{prop:even}, we have $g(\msquare_m,\msquare_m,(m^2-k,k-1,1))> 0$ in this case.
\end{proof}

Note that the previous proof only establishes the constituency of near two-row partitions with a relatively short second row in the tensor square of square partitions with an odd side length. Now we aim to demonstrate the constituency of near two-row partitions whose first part and second part have similar sizes. To accomplish this, we will first establish the constituency of an extreme case where the second row has a maximal length:

\begin{lemma}\label{lem:kk1}
For every odd integer $m \ge 3$, $g\left(\msquare_m,\msquare_m,\left(\frac{m^2-1}{2},\frac{m^2-1}{2},1\right)\right)> 0.$
\end{lemma}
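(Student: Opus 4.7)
My plan is to derive an explicit combinatorial formula for the Kronecker coefficient in question and then verify its positivity. Write $\lambda := (m^2-1)/2$, so the target partition is $(\lambda,\lambda,1)$.

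Following the template of the proof of Proposition~\ref{prop:zerocase}, Pieri's rule gives $s_{(\lambda,\lambda,1)} = s_{(\lambda,\lambda)}s_{(1)} - s_{(\lambda+1,\lambda)}$. Combining this with Littlewood's identity (Theorem~\ref{littlewood}), the fact that $c_{\gamma,(1)}^{\msquare_m} = 1$ precisely when $\gamma = \lambda_m$, and the skew-Schur adjunction $\langle s_{\msquare_m}, f\,s_{(1)}\rangle = \langle s_{\lambda_m}, f\rangle$ reduces the problem to
\[
g(\msquare_m, \msquare_m, (\lambda, \lambda, 1)) = g(\lambda_m, \lambda_m, (\lambda, \lambda)) - g(\msquare_m, \msquare_m, (\lambda+1, \lambda)).
\]
By Lemma~\ref{numofpart}, the second term equals $P_\lambda(m) - P_{\lambda-1}(m)$. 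For the first term, I apply the two-row Jacobi-Trudi identity $s_{(\lambda,\lambda)} = s_{(\lambda)}^2 - s_{(\lambda+1)}s_{(\lambda-1)}$ together with Littlewood's identity once more, using that $s_{(j)} \ast s_\gamma = s_\gamma$ and that $c_{\gamma\delta}^{\lambda_m}\in\{0,1\}$ by Corollary~\ref{mult-free}. The identification $c_{\gamma\delta}^{\lambda_m} = c_{(1)\delta}^{\gamma^\ast}$ (used in the proof of Proposition~\ref{prop:zerocase}) converts the resulting alternating sums into counts of removable corners of partitions inside $\msquare_m$; after the complementation bijection $\gamma\leftrightarrow\gamma^\ast$ and a short recursion for the edge-count $T_k(m) := \sum_{\gamma\vdash k,\,\gamma\subseteq\msquare_m}\ell_1(\gamma)$, I obtain the closed form
\[
g(\msquare_m, \msquare_m, (\lambda, \lambda, 1)) = P_{\lambda-1}(m) - 2\,Q_\lambda(m),
\]
where $Q_\lambda(m) := p_{\lambda-m}(m-1,m)$ counts partitions of $\lambda$ in $\msquare_m$ whose first row has length exactly $m$.

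It remains to verify $P_{\lambda-1}(m) > 2\,Q_\lambda(m)$ for every odd $m\ge 3$. The quantity $2\,Q_\lambda(m)$ arises as the total size of the images of two natural injections into $P_{\lambda-1}(m)$: the map $\iota_1$ that sends $\gamma$ with $\gamma_1 = m$ to $\gamma$ with one box removed from position $(r,m)$, where $r$ is the largest index with $\gamma_r = m$, and the transposed map $\iota_2$ acting on partitions with $\ell(\gamma) = m$. I plan to exhibit the explicit partition
\[
\delta := \bigl((m-2)^{(m+1)/2},\, 1^{(m-1)/2}\bigr) \vdash \lambda-1,
\]
which lies in $\msquare_m$, contains no part equal to $m-1$ (so $\delta$ is not in the image of $\iota_1$), and satisfies $\ell(\delta) = m$ with $\delta_{m-1} = \delta_m = 1$ (so $\delta$ is not in the image of $\iota_2$). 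Combining this witness with an explicit description of the intersection of the two images --- partitions with an $(m-1)$-row sandwiched by the appropriate structure and with $\delta_{m-1} = \delta_m$ --- and pairing the intersection with a compensating subset of $P_{\lambda-1}(m)$ outside both images then yields the strict inequality.

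The principal obstacle is this last comparison: the excess $P_{\lambda-1}(m) - 2Q_\lambda(m)$ equals $1$ in the small cases $m = 3, 5$ (verified by direct enumeration, using $P_3(3) = 3,\ Q_4(3) = 1$ and $P_{11}(5) = 19,\ Q_{12}(5) = 9$), so the argument must carefully track the overlap between the two injections against the surplus witnessed by $\delta$, and cannot afford coarse unimodality bounds for $q$-binomial coefficients.
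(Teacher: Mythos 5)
Your route is genuinely different from the paper's: the paper proves this lemma by induction on $m$ using the semigroup property, decomposing $\msquare_m = (((m-8)^{m-8}) +_V ((m-8)^8)) +_H (8^m)$ and the target partition accordingly, with base cases $m\in\{3,5,7,9,11,\dots\}$ checked by computer. Your plan instead pushes the exact-formula machinery of Proposition~\ref{prop:zerocase} to the equal-rows case. The reduction $g(\msquare_m,\msquare_m,(\lambda,\lambda,1)) = g(\lambda_m,\lambda_m,(\lambda,\lambda)) - g(\msquare_m,\msquare_m,(\lambda+1,\lambda))$ is correct, the Jacobi--Trudi/Littlewood/complementation computation goes through for large $\lambda$ (Corollary~\ref{mult-free} and the identity $c_{\gamma\delta}^{\lambda_m}=c_{(1)\delta}^{\gamma^\ast}$ do not need the restriction $k\le m$ used earlier in the paper), and I can confirm your closed form $g(\msquare_m,\msquare_m,(\lambda,\lambda,1)) = P_{\lambda-1}(m) - 2Q_\lambda(m)$: it checks out for $m=3$, where both sides equal $1$. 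If completed, this would be stronger than the paper's result, since it gives the exact value rather than mere positivity.

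However, there is a genuine gap at exactly the point you flag: the inequality $P_{\lambda-1}(m) > 2Q_\lambda(m)$ is never actually proved. The witness $\delta$ shows only that the complement of $\mathrm{im}\,\iota_1\cup\mathrm{im}\,\iota_2$ is nonempty; since the two images overlap, the identity is really $g = |O| - |I|$, where $O$ is the set of partitions of $\lambda-1$ in $\msquare_m$ outside both images and $I = \mathrm{im}\,\iota_1\cap\mathrm{im}\,\iota_2$. The overlap is not negligible: for $m=3$ the two images coincide entirely ($\iota_1((3,1)) = \iota_2((2,1,1)) = (2,1)$), and one needs $|O| = 2 > 1 = |I|$. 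So the proof requires an injection from $I$ into $O\setminus\{\delta\}$, i.e., an explicit pairing of each partition lying in both images with a distinct partition lying in neither, and this construction is the actual content of the positivity claim --- it is announced but not carried out. Because the excess $P_{\lambda-1}(m)-2Q_\lambda(m)$ equals $1$ for small $m$, there is no slack for a crude estimate, and nothing in the proposal indicates how the injection would be defined in general. As written, the argument establishes only $g(\msquare_m,\msquare_m,(\lambda,\lambda,1)) = P_{\lambda-1}(m)-2Q_\lambda(m) \ge 0$, which is vacuous for a Kronecker coefficient. Either supply that bijective comparison in full, or fall back on an inductive/semigroup argument (as the paper does) to get positivity.
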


\begin{proof}  We can write odd integers $m$ as $ m = 2k+1$, and we will proceed with a proof by induction on $k \ge 1$. 

We can verify the statement directly for $m \in \{3,5,7\}$ through direct computations. 
When $k = 4$, we have $m = 2k+1 = 9$. In this case, the square partition $\msquare_{9}$ can be expressed as  \[\msquare_{9} = ((5^5) +_{V} (5^4)) +_H (4^9).\] Furthermore, we can write \[(40,40,1) = (12,12,1) +_H (10,10) +_H (18,18).\] By assumption, we have $g((5^5), (5^5),(12,12,1))> 0$. Using computer software, we can verify the positivity of $g((5^4), (5^4),(10,10))$ and $g((4^9), (4^9),(18,18))$. Therefore, by the semigroup property, we conclude that $g\left(\msquare_9,\msquare_9,(40,40,1)\right)> 0$.

Now let $k \geq 5$ and $m = 2k+1$. By the inductive hypothesis, we assume that \[g\left(\msquare_{m'},\msquare_{m'},\left(\frac{{m'}^2-1}{2},\frac{{m'}^2-1}{2},1\right)\right)> 0,\] holds for all $m' = 2k' + 1 < 2k+1$. We can express $\msquare_{m}$ as \[\msquare_{m} = (((m-8)^{(m-8)}) +_{V} ((m-8)^8)) +_H (8^m).\] Furthermore, we have \[\left(\frac{m^2-1}{2},\frac{m^2-1}{2},1\right) = \left(\frac{(m-8)^2-1}{2}, \frac{(m-8)^2-1}{2}, 1\right) +_H (4(m-8), 4(m-8)) +_H (4m,4m).\]

Using Theorem \ref{t:strict}, we know that $g((8^m), (8^m), (4m,4m)) > 0$. In the case of $k = 5$, where $m = 11$, we can directly compute and show the positivity of $g((m-8)^8, (m-8)^8, (4(m-8),4(m-8)))$. For $k \in \{6,7\}$, we can use the semigroup property and Theorem \ref{thm:nn} to establish the positivity of $g((m-8)^8, (m-8)^8, (4(m-8),4(m-8)))$ since $g((8,8),(8,8),(8,8)) > 0$. For $k \geq 8$, the positivity of $g((m-8)^8, (m-8)^8, (4(m-8),4(m-8)))$ follows from Theorem \ref{t:strict}. Additionally, by the inductive hypothesis, we have $$g\left(\msquare_{m-8}, \msquare_{m-8}, \left(\frac{(m-8)^2-1}{2}, \frac{(m-8)^2-1}{2}, 1\right)\right) > 0.$$ By the semigroup property (\ref{semigroup}), we conclude that
\[g\left(\msquare_m,\msquare_m,\left(\frac{m^2-1}{2},\frac{m^2-1}{2},1\right)\right)> 0,\]
which completes the induction.
\end{proof}

\begin{cor}\label{kk1-rec}
For every pair of odd integers $l,m \ge 11$, $g(m\times l, m\times l, (\frac{ml-1}{2},\frac{ml-1}{2},1))>0.$
\end{cor}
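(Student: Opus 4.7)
The plan is to reduce the rectangular case to the square case of Lemma \ref{lem:kk1} by induction on $l + m$, peeling off an $8$-wide strip via the semigroup property exactly as in that lemma's own proof. By the transposition symmetry $g((l^m), (l^m), \nu) = g((m^l), (m^l), \nu)$, I may assume without loss of generality that $l \ge m$. The base case $l = m$ is then precisely Lemma \ref{lem:kk1}.

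For the inductive step $l > m$ with $l \ge 19$, I propose the horizontal decomposition
\[
(l^m) = ((l-8)^m) +_H (8^m), \qquad \left(\tfrac{ml-1}{2}, \tfrac{ml-1}{2}, 1\right) = \left(\tfrac{m(l-8)-1}{2}, \tfrac{m(l-8)-1}{2}, 1\right) +_H (4m, 4m).
\]
Theorem \ref{t:strict} supplies $g((8^m), (8^m), (4m, 4m)) > 0$ from $m \ge 11 \ge 8$. The inductive hypothesis, applied (after re-sorting dimensions so the longer one comes first) to the rectangle $(l-8) \times m$---whose sides are both odd and at least $11$, with strictly smaller semiperimeter---gives positivity of the other piece. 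Theorem \ref{semigroup} then yields the desired conclusion.

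This leaves only finitely many base cases: pairs $(l, m)$ with $l > m$ and both $l, m \in \{11, 13, 15, 17\}$, namely $(13,11), (15,11), (17,11), (15,13), (17,13), (17,15)$. For these I would either verify positivity by direct computation, as the proof of Lemma \ref{lem:kk1} does for its small-$m$ base cases, or find an ad-hoc decomposition in the spirit of the $m = 9$ case treated there. The main obstacle is precisely this residue: the \emph{subtract-$8$} reduction cannot shrink a rectangle both of whose dimensions lie in $[11, 17]$, and the tempting fallback $(l^m) = (m^m) +_H ((l-m)^m)$ combined with Lemma \ref{lem:kk1} fails because, for odd $m$ and $l - m = 2$, the forced target split demands positivity of $g\bigl((l-m)^m, (l-m)^m, (\tfrac{m(l-m)}{2}, \tfrac{m(l-m)}{2})\bigr)$, which vanishes by Theorem \ref{thm:nn}.
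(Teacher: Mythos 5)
Your inductive step is sound: peeling off $(8^m)$ horizontally against $(4m,4m)$ is a valid use of Theorem \ref{semigroup}, Theorem \ref{t:strict} does give $g((8^m),(8^m),(4m,4m))>0$, and the recursion terminates either at $l=m$ (Lemma \ref{lem:kk1}) or at one of the six pairs you list. But those six pairs are a genuine gap, not a formality. You have not verified them, and ``direct computation'' is not realistic at this size: $17\times 15$ is a partition of $255$, far beyond the paper's computer checks (which stop at $\msquare_7$, i.e.\ $n=49$, plus a few rectangles of size at most $36$). Worse, you have correctly diagnosed why the obvious rescue fails --- peeling off $((l-m)^m)$ forces that slab's target to be $\bigl(\tfrac{m(l-m)}{2},\tfrac{m(l-m)}{2}\bigr)$, which for $l-m=2$ and $m$ odd has two odd parts and vanishes by Theorem \ref{thm:nn} --- but you supply nothing that actually works for $(13,11)$, $(15,13)$, $(17,15)$. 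As written the proof is incomplete exactly where the difficulty lies.

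The paper's proof sidesteps the residue entirely by decomposing vertically and keying on $(m-l)\bmod 4$ rather than repeatedly subtracting $8$. Assuming $m\ge l$: if $m-l\equiv 0 \pmod 4$ it writes the $m\times l$ rectangle as $\msquare_l +_V (l^{m-l})$ and splits the target as $\bigl(\tfrac{l^2-1}{2},\tfrac{l^2-1}{2},1\bigr)+_H\bigl(\tfrac{l(m-l)}{2},\tfrac{l(m-l)}{2}\bigr)$; the slab $(l^{m-l})$ decomposes into $(m-l)/4$ copies of $l^4$, each paired with $(2l,2l)$, and Lemma \ref{multipleof4} guarantees $g(l^4,l^4,(2l,2l))>0$ even for odd $l$ --- that lemma is precisely the tool that defeats the parity obstruction you ran into. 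If $m-l\equiv 2\pmod 4$, it first splits off a $10\times l$ slab with target $(5l,5l)$, positive by Theorem \ref{t:strict}, after which the leftover rectangle is back in the $0\bmod 4$ case. This covers every pair uniformly, including all six of your leftovers, with no computation at all. If you want to keep your $8$-strip induction, you should at minimum import Lemma \ref{multipleof4} and handle the residual pairs by a width-$(l-m)$ or width-$10$ peel according to $(l-m)\bmod 4$; without that, the base of your induction is unproved.
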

\begin{proof}
By Lemma \ref{lem:kk1}, we know that $g(\msquare_m, \msquare_m, (\frac{m^2-1}{2},\frac{m^2-1}{2},1))>0$ for any odd integer $m \ge 3.$

Let $m,l$ be odd integers. Without loss of generality, assume that $m \ge l$. If $|m-l| \equiv 0 \mod 4$, then we can write the square partition of shape $m \times l$ as $\msquare_l +_V (l^{(m-l)})$. Since $m-l$ is a multiple of $4$, by Lemma \ref{multipleof4} and the semigroup property, we conclude that $g(m\times l, m\times l, (\frac{ml-1}{2},\frac{ml-1}{2},1))>0.$ If $|m-l| \equiv 2 \mod 4$, we can write $m \times l $ as $m \times l = 10 \times l +_V (m-10)\times l$. Note then $(m-10-l) \equiv 0 \mod 4$, and by Theorem \ref{t:strict}, $g(10\times l, 10\times l, (5l,5l))>0.$ Hence, by semigroup property, we conclude that $g(m\times l, m\times l, (\frac{ml-1}{2},\frac{ml-1}{2},1))>0$ for any odd integers $m,l \ge 11.$
\end{proof}




\begin{lemma}\label{multipleof4}
For every integer $m\ge 2$, $g(m^4,m^4,(2m,2m))>0.$
\end{lemma}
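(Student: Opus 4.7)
My plan is to prove the lemma by induction on $m$, splitting into two cases based on the parity of $m$ and using the semigroup property (Theorem \ref{semigroup}) as the main inductive mechanism.

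For the base case $m = 2$, since $(2^4)' = (4, 4)$, the transposition property of Kronecker coefficients gives $g(2^4, 2^4, (4, 4)) = g((4, 4), (4, 4), (4, 4))$, which is positive by Theorem \ref{thm:nn} applied to $\mu = (4, 4)$ (length $2$, all parts even).

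For the inductive step when $m$ is even, I write $m = 2k$ and use the horizontal decomposition of $m^4$ as $k$ copies of $(2^4)$ summed via $+_H$, together with the analogous decomposition of $(2m, 2m)$ as $k$ copies of $(4, 4)$. Iterated application of the semigroup property starting from the $m = 2$ base case yields $g(m^4, m^4, (2m, 2m)) > 0$.

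For odd $m$, I first handle the sub-base case $m = 3$ via Theorem \ref{t:strict}: the explicit formula gives $g(3^4, 3^4, (6, 6)) = p_6(4, 3) - p_5(4, 3) = 5 - 4 = 1$. For odd $m \ge 5$, since $m - 3 \ge 2$ is even, the even case yields $g((m-3)^4, (m-3)^4, (2(m-3), 2(m-3))) > 0$. Combining this with $g(3^4, 3^4, (6, 6)) > 0$ via the decompositions $m^4 = (3^4) +_H ((m-3)^4)$ and $(2m, 2m) = (6, 6) +_H (2(m-3), 2(m-3))$, and applying the semigroup property, completes the induction. The main technical point is the $m = 3$ base case, which Theorem \ref{t:strict} reduces to a small enumeration of partitions in a $4 \times 3$ box; all remaining steps are direct applications of the semigroup property and previously established results.
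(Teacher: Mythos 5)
Your proof is correct, and its overall skeleton matches the paper's: handle even $m$ first, then reduce odd $m\ge 5$ to the even case via the decompositions $m^4 = 3^4 +_H (m-3)^4$ and $(2m,2m) = (6,6) +_H (2m-6,2m-6)$ together with the semigroup property. The two places where you diverge are both in the base cases, and both are improvements in self-containedness. For even $m$ the paper simply cites Theorem \ref{thm:nn}, leaving implicit how a $4$-row rectangle reduces to the $2\times n$ case (e.g.\ via the vertical-sum corollary applied to $(m,m)+_V(m,m)$); you instead build everything from the single triple $g(2^4,2^4,(4,4))>0$, obtained by transposition from Theorem \ref{thm:nn}, and iterate horizontal sums --- equally valid and arguably more transparent. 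For $m=3$ the paper resorts to a computer check of $g(3^4,3^4,(6,6))>0$, whereas you derive it from the explicit formula in Theorem \ref{t:strict} as $p_6(4,3)-p_5(4,3)=5-4=1$, which is correct and eliminates the computational dependence. Everything checks out.
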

\begin{proof}
If $m$ is even, it follows from Theorem \ref{thm:nn}.
If $m$ is odd, we first note that with the help of the computer, one can check that $g(3^4,3^4,(6,6))>0$. Then we can decompose the partition $m^4$ as $m^4 = 3^4 +_H (m-3)^4$. Since $m-3$ is even, we have $g((m-3)^4, (m-3)^4, (2m-6.2m-6))>0$. By semigroup property, we can conclude that $g(m^4,m^4,(2m,2m))>0.$ \end{proof}

\begin{lemma}\label{lem:almostkk1}
For every odd integer $m \ge 3$, $g\left(\msquare_m,\msquare_m,\left(\frac{m^2+1}{2},\frac{m^2-3}{2},1\right)\right)> 0.$
\end{lemma}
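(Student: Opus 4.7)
The plan is to adapt the induction scheme from Lemma \ref{lem:kk1} almost verbatim. The target $((m^2+1)/2, (m^2-3)/2, 1)$ differs from the one treated there by moving a single box from the second row to the first, but the key arithmetic observation still applies: splitting off \emph{symmetric} two-row partitions from a smaller inner square absorbs the $\pm 2$ row asymmetry entirely inside that inner block, leaving the outer building blocks symmetric. I would induct on odd $m$ in steps of $8$, which preserves parity.

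First I would verify the base cases $m \in \{3, 5, 7\}$ by direct character computation, checking positivity of $g((3^3),(3^3),(5,3,1))$, $g((5^5),(5^5),(13,11,1))$, and $g((7^7),(7^7),(25,23,1))$. I would then handle $m = 9$ separately, since it lies outside the reach of the induction step below. Following the template of Lemma \ref{lem:kk1}, decompose
\[\msquare_9 = \bigl((5^5) +_V (5^4)\bigr) +_H (4^9), \qquad (41, 39, 1) = (13, 11, 1) +_H (10, 10) +_H (18, 18).\]
The first block is positive by the $m = 5$ base case; the two remaining blocks follow from Theorem \ref{t:strict} (for instance $p_{10}(4,5) - p_9(4,5) = 1$), and the semigroup property combines them.

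For the inductive step with odd $m \ge 11$, I would reuse the decomposition
\[\msquare_m = \bigl(\msquare_{m-8} +_V (m-8)^8\bigr) +_H (8^m),\]
and split the target as
\[\left(\tfrac{m^2+1}{2},\, \tfrac{m^2-3}{2},\, 1\right) = \left(\tfrac{(m-8)^2+1}{2},\, \tfrac{(m-8)^2-3}{2},\, 1\right) +_H \bigl(4(m-8),\, 4(m-8)\bigr) +_H (4m,\, 4m).\]
The first factor is positive by the inductive hypothesis applied to $m - 8$, which is odd and at least $3$. The middle factor is positive by Theorem \ref{t:strict} for $m \ge 17$, by direct computation for $m = 11$, and by the semigroup property together with Theorem \ref{thm:nn} (using $g((8,8),(8,8),(8,8)) > 0$) for $m \in \{13, 15\}$. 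The outer factor is positive by Theorem \ref{t:strict} since $m \ge 8$. Applying the semigroup property then yields the conclusion.

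I expect the main obstacle to be the ad hoc treatment of $m = 9$: it falls outside the induction (the procedure would demand a meaningless ``$m = 1$'' base case) and forces a bespoke decomposition along with direct verification of two auxiliary Kronecker coefficients on non-square rectangles. The rest of the argument is essentially mechanical once the decomposition is chosen, because both intermediate two-row blocks $\bigl(4(m-8), 4(m-8)\bigr)$ and $(4m, 4m)$ are symmetric, so the $\pm 2$ row asymmetry of the target never leaves the innermost square and the inductive hypothesis fits cleanly at each step.
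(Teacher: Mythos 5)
Your argument is sound, but it takes a genuinely different route from the paper. The paper proves this lemma by induction on odd $m$ in steps of $4$, using the decomposition $\msquare_m = (\msquare_{m-4} +_V ((m-4)^4)) +_H (4^m)$ and splitting the target as the inner three-row partition of $(m-4)^2$ plus the two symmetric blocks $(2m-8,2m-8)$ and $(2m,2m)$; the point of that choice is that Lemma \ref{multipleof4} supplies $g(j^4,j^4,(2j,2j))>0$ uniformly for all $j\ge 2$, so both rectangular two-row pieces are handled in one stroke, the only base cases are $m=3,5$, and no case analysis on $m$ is needed. Your step-$8$ scheme, transplanted from Lemma \ref{lem:kk1}, is equally valid --- the size and row-sum bookkeeping checks out, the inner partition stays a legitimate three-row shape down to $m-8=3$, and the residues $1,3,5,7 \bmod 8$ are all covered by your base cases --- but it is more expensive: you need four base cases including the bespoke $m=9$ decomposition, and you must split the middle factor $g((m-8)^8,(m-8)^8,(4(m-8),4(m-8)))$ into three subcases ($m=11$ by computation, $m\in\{13,15\}$ via $g((8,8),(8,8),(8,8))>0$, $m\ge 17$ via Theorem \ref{t:strict}) because the paper's Theorem \ref{t:strict} positivity clause requires both rectangle dimensions to be at least $8$. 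One small imprecision: for the $m=9$ case you cite Theorem \ref{t:strict} for $g((4^9),(4^9),(18,18))$, but the side length $4$ falls below the $l,m\ge 8$ threshold of its positivity clause, so you are implicitly relying on the unconditional formula $p_k(l,m)-p_{k-1}(l,m)$ together with a direct count (as you do explicitly for $g((5^4),(5^4),(10,10))$); the paper simply verifies both of these by computer in the proof of Lemma \ref{lem:kk1}. In short, both proofs work; the paper's buys brevity by tailoring the step size to Lemma \ref{multipleof4}, while yours buys uniformity with the proof of Lemma \ref{lem:kk1} at the cost of extra base cases and casework.
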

\begin{proof}
We can check by direct computation that the statement holds for $m = 3$ and $m = 5.$ Let $m \ge 7$. Suppose that the statement holds for odd numbers less than $m.$ Consider the decomposition $\msquare_m = (\msquare_{m-4} +_V (m-4)^4) +_H (4^{m-4})$. Since $g(4^{m-4}, 4^{m-4}, (2m,2m)) >0$ and $g((m-4)^4, (m-4)^4, (2m-8,2m-8))>0$ by Lemma \ref{multipleof4}, by semigroup property, we have that $g\left(\msquare_m,\msquare_m,\left(\frac{m^2+1}{2},\frac{m^2-3}{2},1\right)\right)> 0$. By induction, $g\left(\msquare_m,\msquare_m,\left(\frac{m^2+1}{2},\frac{m^2-3}{2},1\right)\right)> 0$ for any odd integer $m \ge 3$.
\end{proof}

We will use Lemma \ref{lem:kk1} and Lemma \ref{lem:almostkk1} as ingredients to establish the positivity in the case where $m$ is an odd integer and the first part and second part of the near two-row partition are of similar sizes. 
\begin{prop} \label{odd case}
For every odd integer $m \ge 7$ and $k \ge 5$ such that $k \ge \frac{(m-1)^2}{2}+1$, $g(\msquare_m,\msquare_m,(m^2-k,k-1,1))> 0$. \end{prop}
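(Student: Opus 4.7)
The plan is to parametrize and reduce to a semigroup argument. Set $t := \tfrac{m^2+1}{2} - k$, so that the hypothesis becomes $0 \le t \le m-1$ (the upper bound $k \le \tfrac{m^2+1}{2}$ being required for the three-row shape to be a partition) and the target becomes $\nu_t := (\tfrac{m^2-1}{2}+t,\,\tfrac{m^2-1}{2}-t,\,1)$. The boundary values $t=0$ and $t=1$ are precisely the content of Lemma~\ref{lem:kk1} and Lemma~\ref{lem:almostkk1}, so only $2 \le t \le m-1$ requires work.

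For these values I would apply the semigroup property to the rectangular decomposition $\msquare_m = (\msquare_{m-2} +_V (m-2)^2) +_H (2^m)$, splitting $\nu_t$ accordingly as
\begin{align*}
\nu_t &= \Bigl(\tfrac{(m-2)^2-1}{2},\ \tfrac{(m-2)^2-1}{2},\ 1\Bigr) +_H (m-1,\,m-3) +_H (m+t-1,\,m-t+1) && (t\text{ even}),\\
\nu_t &= \Bigl(\tfrac{(m-2)^2+1}{2},\ \tfrac{(m-2)^2-3}{2},\ 1\Bigr) +_H (m-1,\,m-3) +_H (m+t-2,\,m-t+2) && (t\text{ odd}).
\end{align*}
A direct check of first and second rows confirms both identities. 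The base Kronecker coefficient for $\msquare_{m-2}$ is positive by Lemma~\ref{lem:kk1} (for even $t$) or Lemma~\ref{lem:almostkk1} (for odd $t$); the middle coefficient $g((m-2)^2,(m-2)^2,(m-1,m-3))$ is positive by Theorem~\ref{thm:nn} since $m$ is odd, so $m-1$ and $m-3$ are both even and the target has only two rows; and the top coefficient $g((2^m),(2^m),(a,b)) = g((m,m),(m,m),(a,b))$ (obtained by transposing the first two arguments) is likewise positive by Theorem~\ref{thm:nn}, since in both parity cases the displayed two-row top partition $(a,b)$ has both parts even. Chaining these three positive coefficients through Theorem~\ref{semigroup} (once via its $+_V/+_H$ corollary to merge the base with the middle, once via the standard $+_H$ version to attach the top) gives $g(\msquare_m,\msquare_m,\nu_t) > 0$.

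The main technical issue to manage is parity. Theorem~\ref{thm:nn} forces any two-row target in $(n,n)\otimes(n,n)$ to have both parts even, while the first two rows of $\nu_t$ themselves swap parity each time $t$ increases by one (because $m$ is odd). This is exactly why the argument bifurcates into parity classes: shifting the base from the equal-rows Lemma~\ref{lem:kk1} to the almost-equal-rows Lemma~\ref{lem:almostkk1} absorbs the parity flip, and then the uniform middle block $(m-1,m-3)$ leaves an even top block of size $2m$ for every admissible $t$. Locating such a uniform choice of middle/top pair that works simultaneously in both parities and for all $t \le m-1$ is the only nontrivial bookkeeping in the proof.
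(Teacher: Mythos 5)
Your proof is correct and follows essentially the same route as the paper: the identical decomposition $\msquare_m = (\msquare_{m-2}+_V(m-2,m-2))+_H(2^m)$, the same base Lemmas \ref{lem:kk1} and \ref{lem:almostkk1} selected by the parity of the row gap, Theorem \ref{thm:nn} for the two-row rectangular pieces, and the semigroup property. The only (harmless) difference is bookkeeping: you concentrate the variable part of the shift into a single two-row block parametrized by $t$, whereas the paper spreads it over two blocks with parameters $x,y$ and phrases the argument as an induction on $m$.
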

\begin{proof} We shall prove the statement by induction on odd integers $m \ge 7$. Note that we can check by semigroup property and computer that the statement holds for $m = 7$. 
Let $m \ge 9$ be an odd integer. Suppose that the statement holds for $m -2$. Consider the decomposition that $\msquare_m = (\msquare_{m-2} +_V (m-2,m-2)) +_H (2^m)$. Let $a := (m^2-k)-(k-1)$. Since $k \ge \frac{(m-1)^2}{2}+1$, we have $(m^2-k)-(k-1)\le 2m-2.$ We will discuss three cases as follows.
\begin{enumerate}
    \item [Case 1:] If $a = 0$, by Lemma \ref{lem:kk1}, we know that $g\left(\msquare_m,\msquare_m,\left(\frac{m^2-1}{2},\frac{m^2-1}{2},1\right)\right)> 0.$
    \item [Case 2:] If $a = 2$, by Lemma \ref{lem:almostkk1}, we know that $g\left(\msquare_m,\msquare_m,\left(\frac{m^2+1}{2},\frac{m^2-3}{2},1\right)\right)> 0.$
    \item [Case 3:] If $a > 0$ and $a \equiv 0 \mod 4$, consider the following decomposition of $(m^2-k,k-1,1)$: \[ \left(\frac{(m-2)^2-1}{2},\frac{(m-2)^2-1}{2},1\right)+_H (m+1+ 2x,m-1-2x)+(m-1+2y,m-3-2y),\]
    where $x \le \frac{m-1}{2}, y  \le \frac{m-3}{2}$ are non-negative integers such that $4(x+y+1) = a.$ By Lemma \ref{lem:kk1}, Theorem \ref{thm:nn} and semigroup property, we can conclude that $g(\msquare_m,\msquare_m,(m^2-k,k-1,1))> 0$ in this case.
    \item [Case 4:] If $a >2$ and $a \equiv 2 \mod 4$, consider the following decomposition of $(m^2-k,k-1,1)$: \[ \left(\frac{(m-2)^2+1}{2},\frac{(m-2)^2-3}{2},1\right)+_H (m+1+ 2x,m-1-2x)+(m-1+2y,m-3-2y),\]
    where $x \le \frac{m-1}{2}, y  \le \frac{m-3}{2}$ are non-negative integers such that $4(x+y+1) = a-2.$ By the inductive hypothesis, Theorem \ref{thm:nn} and semigroup property, we can conclude that $g(\msquare_m,\msquare_m,(m^2-k,k-1,1))> 0$ in this case.
\end{enumerate}
\end{proof}
We now put the above pieces together to prove Theorem \ref{near two-row}.
\begin{proof}[Proof of Theorem \ref{near two-row}]
One can check by direct computation that the proposition holds for $m = 5$ and $m = 7$. 
Then the statement follows directly from Corollary \ref{cor:zerocase}, Proposition \ref{prop:even}, Proposition \ref{prop:short2ndrow} and Proposition \ref{odd case}.
\end{proof}

\subsection{Three-row partitions} Next, we consider the case when $\mu$ is a three-row partitions with $\mu_3 \ge 2.$ Below is one of our main results. We will prove it by discussing different cases according to the parity of $m$ and different values of $k$.

\begin{thm}\label{mainthm}
For every odd integer $m\ge 5$, $g(\msquare_m, \msquare_m, \mu)>0$ for any three-row partition $\lambda\vdash m^2$ with $\mu_3 \ge 2$.
\end{thm}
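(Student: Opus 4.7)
The plan is to prove the statement by strong induction on odd $m \ge 5$. The base cases $m = 5$ and $m = 7$ (and possibly $m = 9$) can be verified by direct computation, or combined with the semigroup property and smaller Kronecker positivity results already developed in the paper. For the inductive step, I use the decomposition
\[ \msquare_m = (\msquare_{m-2} +_V (m-2, m-2)) +_H (2^m), \]
exactly as in Proposition~\ref{prop:even}, which reduces a Kronecker coefficient on $\msquare_m$ to pieces on $\msquare_{m-2}$, $(m-2, m-2)$, and $(2^m)$.

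Given a three-row partition $\mu = (\mu_1, \mu_2, \mu_3) \vdash m^2$ with $\mu_3 \ge 2$, I seek a horizontal decomposition $\mu = \alpha +_H \beta +_H \gamma$ with $|\alpha| = (m-2)^2$, $|\beta| = 2(m-2)$, $|\gamma| = 2m$, satisfying: (i) $\alpha$ is a three-row partition with $\alpha_3 \ge 2$; (ii) $\ell(\beta) \le 3$ and every part of $\beta$ is even; (iii) $\ell(\gamma) \le 3$ and every part of $\gamma$ is even. Under these conditions the inductive hypothesis gives $g(\msquare_{m-2}, \msquare_{m-2}, \alpha) > 0$; Theorem~\ref{thm:nn} applied with $n = m-2$ gives $g((m-2,m-2),(m-2,m-2), \beta) > 0$; and the transposition identity together with Theorem~\ref{thm:nn} for $n = m$ gives $g((2^m), (2^m), \gamma) = g((m,m), (m,m), \gamma) > 0$. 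The corollary to the semigroup property (Theorem~\ref{semigroup}) then combines these to produce $g(\msquare_m, \msquare_m, \mu) > 0$.

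The construction of $(\alpha, \beta, \gamma)$ splits into two regimes governed by the size of $\mu_3$; note that the requirement that $\beta$ and $\gamma$ have even parts forces $\alpha_i \equiv \mu_i \pmod{2}$, which is the only parity constraint. When $2 \le \mu_3 \le \frac{(m-2)^2}{3}$, I set $\alpha_3 = \mu_3$ and take $\beta, \gamma$ two-row, leaving a one-parameter family in $\alpha_2$ satisfying $\alpha_2 \in [\mu_3, \tfrac{(m-2)^2 - \mu_3}{2}]$; a check shows an admissible $\alpha_2$ of the required parity (and making $\beta_2 \le m-2$, $\gamma_2 \le m$) exists for all such $\mu$. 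When $\mu_3 > \frac{(m-2)^2}{3}$, the condition $\alpha_2 \ge \alpha_3 = \mu_3$ is no longer achievable, so I split $\mu_3 = \alpha_3 + \beta_3 + \gamma_3$ with $\alpha_3 \in \{2,3\}$ (matching the parity of $\mu_3$) and $\beta_3, \gamma_3$ positive even; this produces three-row $\beta, \gamma$ while preserving the even-parts property, and the remaining degrees of freedom suffice to satisfy all partition conditions.

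The main obstacle is the exhaustive verification of feasibility across the various shapes and parities of $\mu$, particularly near the boundary $\mu_3 \approx \frac{(m-2)^2}{3}$ and in the most balanced regime $\mu_1 \approx \mu_2 \approx \mu_3 \approx m^2/3$. Treating these extremes will likely require auxiliary base-case lemmas analogous to Lemmas~\ref{lem:kk1} and~\ref{lem:almostkk1}, establishing positivity directly for specific balanced three-row partitions (for example, three-row analogues such as $(\tfrac{m^2-2}{3}, \tfrac{m^2-2}{3}, 2)$-type shapes), which then serve as extra base pieces that the inductive decomposition can invoke whenever the generic two-regime construction above breaks down.
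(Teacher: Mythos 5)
Your overall strategy --- induction on odd $m$, the decomposition $\msquare_m = (\msquare_{m-2} +_V (m-2,m-2)) +_H (2^m)$, and the semigroup property --- is the same device the paper uses, but the paper only applies it in the sub-case $\mu_1-\mu_2\ge 4$, $\mu_3\le 2m-2$ of Proposition~\ref{odd:mu3ge2part2}; your claim that it covers all three-row $\mu$ with $\mu_3\ge 2$ has two concrete failures. First, in your first regime the requirement that $\beta\vdash 2m-4$ and $\gamma\vdash 2m$ be (at most) two-row partitions with all parts even is incompatible with $\mu_1-\mu_2\le 3$: since $m$ is odd, $\beta_1+\beta_2=2m-4\equiv 2\pmod 4$ and $\gamma_1+\gamma_2=2m\equiv 2\pmod 4$, so $\beta_1-\beta_2$ and $\gamma_1-\gamma_2$ are each $\equiv 2\pmod 4$ and hence each at least $2$, forcing $\mu_1-\mu_2=(\alpha_1-\alpha_2)+(\beta_1-\beta_2)+(\gamma_1-\gamma_2)\ge 4$. (The $\ell=4$, all-parts-odd alternative of Theorem~\ref{thm:nn} is unavailable because $\mu$ has only three rows.) This is precisely why the paper switches to the decomposition $\msquare_m=(\msquare_{m-4}+_V (m-4)^4)+_H 4^m$ when $\mu_1-\mu_2\in\{0,1,2,3\}$.

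Second, your regime $\mu_3>\frac{(m-2)^2}{3}$ is arithmetically infeasible as written: with $\alpha_3\in\{2,3\}$ you need $\beta_3+\gamma_3=\mu_3-\alpha_3\ge\frac{(m-2)^2}{3}-3$, while a three-row all-even partition of $2m-4$ (resp.\ $2m$) has third part at most $\frac{2m-4}{3}$ (resp.\ $\frac{2m}{3}$), so $\beta_3+\gamma_3\le\frac{4m-4}{3}$, which is far too small for $m\ge 9$. Even after repairing this by taking $\alpha_3$ large instead of small, the width-$2$ reduction cannot reach the (near-)rectangular partitions at all: for $3\mid m$ and $\mu=(\frac{m^2}{3},\frac{m^2}{3},\frac{m^2}{3})$, any decomposition $\mu=\alpha+_H\beta+_H\gamma$ forces $\alpha_3\ge\frac{m^2}{3}-\frac{4m-4}{3}=\frac{(m-2)^2}{3}$ and hence $\alpha=\bigl(\frac{(m-2)^2}{3}\bigr)^3$, which is not integral since $3\nmid (m-2)^2$. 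These balanced shapes are not a finite list of base cases but infinite families (one for each residue of $m$ modulo $3$), and the paper builds separate machinery for them: the width-$3$ decomposition $\msquare_m=(\msquare_{m-3}+_V(m-3)^3)+_H 3^m$ together with $g(k^3,k^3,k^3)>0$ (Lemma~\ref{3k}), the reduction to even side length in Proposition~\ref{odd:mu3ge2part1}, and Lemmas~\ref{m=2mod3}--\ref{m=0mod3}. You correctly anticipate that auxiliary lemmas are needed near the balanced extreme, but that is where the real content of the proof lies, so the gap is essential rather than a routine verification.
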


Below are some results that will be used to prove the positivity of $g(\msquare_m, \msquare_m, \mu)$ when $m$ is an even integer.
\begin{prop}\label{rectangular}
Let $l,k,m$ be positive integers such that $lk = m^2$. Then $g(\msquare_m,\msquare_m, k^l) > 0$ if $l \mid k$.
\end{prop}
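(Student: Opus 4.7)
Since $l \mid k$ and $lk = m^2$, writing $k = ls$ and substituting gives $l^2 s = m^2$, so $s = (m/l)^2$; for $s$ to be an integer we need $l \mid m$, yielding $m = lt$ and $k = lt^2$ for some positive integer $t$. The resulting identities $\msquare_m = (lt)^{lt}$ and $k^l = (lt^2)^l$ admit two complementary tilings by the block $\msquare_l = l^l$: the square $\msquare_m$ decomposes as a $t \times t$ grid of $\msquare_l$'s (horizontally concatenate $t$ copies in each row to form $(lt)^l$, then vertically stack $t$ such rows), while the rectangle $k^l$ decomposes as a single row of $t^2$ horizontally adjacent $\msquare_l$'s.

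The plan is to apply the semigroup property iteratively to $t^2$ copies of the base triple $(\msquare_l, \msquare_l, \msquare_l)$, choosing the combination mode at each step so that the first two arguments tile $\msquare_m$ while the third tiles $k^l$. In \emph{Stage 1}, partition the $t^2$ copies into $t$ groups of $t$ and within each group apply $t-1$ standard horizontal semigroup combinations (mode $+_H, +_H, +_H$); this produces $t$ intermediate triples of the form $((lt)^l, (lt)^l, (lt)^l)$, each with positive Kronecker coefficient. In \emph{Stage 2}, combine these $t$ intermediate triples via $t-1$ applications of the mixed-mode semigroup from the corollary ($+_V$ on the first two arguments, $+_H$ on the third). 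The first two arguments collapse to $(lt)^l +_V \cdots +_V (lt)^l = (lt)^{lt} = \msquare_m$, while the third argument collapses to $(lt)^l +_H \cdots +_H (lt)^l = (lt^2)^l = k^l$. Since every step preserves positivity, this yields $g(\msquare_m, \msquare_m, k^l) > 0$ as required.

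The main obstacle, and the principal input to the argument, is the base case $g(\msquare_l, \msquare_l, \msquare_l) > 0$, which is precisely the $t = 1$ specialization of the proposition and therefore cannot be obtained from the iteration itself. It is trivial for $l = 1$, and for $l = 2$ it follows directly from Theorem \ref{thm:nn} since $(2,2)$ has length $\leq 4$ with all parts even. For $l \geq 3$ it must be handled by an independent argument, for instance by direct character computation via the Murnaghan-Nakayama rule for small $l$, or by invoking known positivity results for tensor squares of self-conjugate rectangular partitions. Granted this base input, the two-stage semigroup construction delivers the proposition uniformly in $l$, $k$, and $m$.
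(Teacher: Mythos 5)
Your proposal follows essentially the same route as the paper: from $l\mid k$ and $lk=m^2$ one gets $l\mid m$, both $\msquare_m$ and $k^l$ are tiled by copies of $\msquare_l$, and the semigroup property (in its mixed horizontal/vertical form) is iterated; your two-stage bookkeeping, with $m=lt$, $k=lt^2$, Stage 1 producing $t$ triples $((lt)^l,(lt)^l,(lt)^l)$ and Stage 2 stacking them via $+_V,+_V,+_H$, is correct. The one place you go beyond the paper is in explicitly flagging the base case $g(\msquare_l,\msquare_l,\msquare_l)>0$: this input is genuinely required, is not produced by the iteration, and the paper's own proof silently assumes it. It does hold for every $l\ge 2$ --- it is a special case of the Bessenrodt--Behns theorem that $\chi^\lambda$ occurs in $\chi^\lambda\otimes\chi^\lambda$ for every $\lambda$ other than a single row or column, and the only instance the paper actually uses downstream, namely $l=3$, is verified by computer in Lemma \ref{3k} --- so your reduction is sound and matches the paper's, but to make either argument complete a citation or proof of the base case should be supplied rather than deferred.
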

\begin{proof}
Let $l,k,m$ be positive integers such that $lk = m^2$. Suppose that $l \mid k.$ Then, $l^2 \mid m^2$ and hence $l \mid m$. It follows that we can decompose $\msquare_m$ as \[\msquare_m = \sum_{+_H} \left(\sum_{+_V} \msquare_l\right).\]
By semigroup property, we can conclude that $g(\msquare_m,\msquare_m, k^l) > 0$ 
\end{proof}

\begin{cor}\label{3krectangular}
    If $m$ is a multiple of $3$, then $g\left(\msquare_m,\msquare_m, \left(\frac{m^2}{3},\frac{m^2}{3},\frac{m^2}{3}\right)\right) > 0. $
\end{cor}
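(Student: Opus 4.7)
The plan is to derive Corollary \ref{3krectangular} as an immediate specialization of Proposition \ref{rectangular}. I would set $l = 3$ and $k = m^2/3$; both are positive integers because $3 \mid m$, and they satisfy the ambient equation $lk = m^2$. With this choice, the target partition $k^l$ in the hypothesis of Proposition \ref{rectangular} becomes exactly $(m^2/3, m^2/3, m^2/3)$, the three-row rectangle appearing in the corollary.

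The only remaining hypothesis to verify is the divisibility condition $l \mid k$, i.e.\ $3 \mid m^2/3$. Writing $m = 3t$, we compute $m^2/3 = 3t^2$, which is visibly divisible by $3$; equivalently, $3 \mid m$ forces $9 \mid m^2$ and hence $3 \mid m^2/3$. Once this check is in place, Proposition \ref{rectangular} delivers
\[
g\!\left(\msquare_m,\, \msquare_m,\, \left(\tfrac{m^2}{3}, \tfrac{m^2}{3}, \tfrac{m^2}{3}\right)\right) > 0.
\]

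There is no substantive obstacle beyond this divisibility check. All of the combinatorial work is already carried out inside the proof of Proposition \ref{rectangular}: one tiles $\msquare_m$ by $(m/3)^2$ blocks of shape $\msquare_3$, iterates the semigroup property (using horizontal and vertical sums on the first two coordinates and horizontal sums on the third), and assembles the third coordinate as a horizontal concatenation of the same $\msquare_3$ blocks to form the rectangle $(m^2/3)^3$. The role of Corollary \ref{3krectangular} is simply to record this special case, which will then be invoked later when establishing positivity for general three-row partitions in the proof of Theorem \ref{mainthm}.
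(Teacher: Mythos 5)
Your proposal is correct and matches the paper exactly: the paper's proof of Corollary \ref{3krectangular} is simply ``It follows from Proposition \ref{rectangular},'' i.e.\ the specialization $l=3$, $k=m^2/3$ with the divisibility check $3\mid m^2/3$ that you carry out explicitly. Your added detail about the tiling by $\msquare_3$ blocks accurately reflects what the proof of Proposition \ref{rectangular} already does, so nothing further is needed.
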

\begin{proof}
    It follows from Proposition \ref{rectangular}.
\end{proof}
\begin{lemma}\label{3k}
For $k \ge 3$, $g(3^k, 3^k, k^3) =g(k^3, k^3, k^3) > 0.$
\end{lemma}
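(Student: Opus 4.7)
The equality $g(3^k, 3^k, k^3) = g(k^3, k^3, k^3)$ is immediate from the transposition symmetry of Kronecker coefficients quoted in Section \ref{background}: since $(3^k)' = k^3$, applying the identity $g(\mu,\nu,\lambda) = g(\mu',\nu',\lambda)$ gives
$$g(3^k, 3^k, k^3) = g\bigl((3^k)', (3^k)', k^3\bigr) = g(k^3, k^3, k^3).$$
So it suffices to prove positivity of $g(k^3,k^3,k^3)$ for every $k \ge 3$.

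The plan is to combine verification of a few small base cases with the semigroup property (Theorem \ref{semigroup}). The crucial observation is that the horizontal sum of two three-row rectangles behaves very simply: $k^3 +_H l^3 = (k+l)^3$. Consequently, whenever $g(k^3,k^3,k^3) > 0$ and $g(l^3,l^3,l^3) > 0$ are both known, the semigroup property immediately yields $g((k+l)^3,(k+l)^3,(k+l)^3) > 0$. In other words, positivity propagates under addition of the parameter.

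Assuming we can verify the three base cases $k \in \{3,4,5\}$ directly, the lemma follows: for any $k \ge 6$ one can write $k = 3a + 4b$ with non-negative integers $a,b$, because the Frobenius number of $\{3,4\}$ equals $3\cdot 4 - 3 - 4 = 5$. Iterating the semigroup property with the base cases $k=3$ and $k=4$ then produces $g(k^3,k^3,k^3) > 0$ for every $k \ge 6$. The case $k=5$ must be handled separately since $5$ lies in the exceptional gap $\{1,2,5\}$ of non-representable integers.

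The main obstacle is therefore the verification of the three base cases. These reduce to explicit Kronecker computations in the symmetric groups $S_9$, $S_{12}$, and $S_{15}$, which are tractable either by direct character expansion via the Murnaghan--Nakayama rule applied to the inner product $\frac{1}{(3k)!}\sum_{\omega \in S_{3k}} \chi^{k^3}[\omega]^3$, or by computer algebra. A cleaner alternative would be to exhibit a uniform combinatorial or representation-theoretic model --- for instance via symmetric three-dimensional plane partitions, or via $GL_3$-invariants in a rectangular polynomial representation --- that produces an explicit nonzero contribution to $g(k^3,k^3,k^3)$ for all $k \ge 3$ simultaneously, thereby bypassing the case analysis; I do not know such a model offhand, so I expect the proof will rely on the base-case verifications plus the semigroup bootstrapping above.
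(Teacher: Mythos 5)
Your proposal is correct and matches the paper's argument essentially step for step: the equality via transposition, computer verification of the base cases $k\in\{3,4,5\}$, and iterated use of the semigroup property on horizontal sums of three-row rectangles (the paper writes $k=3j+r$ with $r\in\{0,4,5\}$ rather than $k=3a+4b$, but this is the same numerical bookkeeping).
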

\begin{proof}
With the help of the computer, we can check that $g(k^3, k^3, k^3) > 0$ for $k \in \{3,4,5\}.$ For any $k \ge 6$, we can write $k = 3j+r$ for some non-negative integers $j,r$ such that $r \in \{0,4,5\}$. Then, we can write the partition $(k,k,k)$ as a horizontal sum of $j$ square partitions of side length $3$, and the rectangular partition $(r,r,r)$. The generalized semigroup property shows that $g(k^3, k^3, k^3) > 0$ for $k \ge 6$. Furthermore, by the transposition property, we have $g(3^k, 3^k, k^3) =g(k^3, k^3, k^3) > 0$ for $k \ge 3.$
\end{proof}

Lemma \ref{m=2mod3}, \ref{m=1mod3} and \ref{m=0mod3} will be used in the proof of Proposition \ref{even:mu3ge2}. These specific cases are addressed individually due to their different decomposition approach, setting them apart from the remaining cases of the proposition's proof.

\begin{lemma}\label{m=2mod3}
The Kronecker coefficient $g\left(\msquare_m,\msquare_m, \left(\frac{m^2+2}{3},\frac{m^2-1}{3},\frac{m^2-1}{3}\right)\right) > 0$ for any positive integer $m\ge 5$ such that $m \equiv 2 \mod 3$.
\end{lemma}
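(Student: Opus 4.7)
I will proceed by induction on $m$ within the arithmetic progression $m \equiv 2 \pmod 3$, $m \ge 5$. The base case $m=5$ requires verifying $g(\msquare_5,\msquare_5,(9,8,8))>0$ by direct computation; since this involves a Kronecker coefficient for $S_{25}$ and shapes with only three rows on the target side, it is computationally tractable in the same spirit as the base cases used elsewhere in Section~\ref{sec:constituency}.

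For the inductive step, suppose $m \ge 8$ with $m \equiv 2 \pmod 3$ and that the statement holds for $m-3$ (which still satisfies $m-3 \ge 5$ and $m-3 \equiv 2 \pmod 3$). The key is to find compatible decompositions of both $\msquare_m$ and the target partition so that every piece is already handled by an available result. Specifically, I will use
\[
\msquare_m \;=\; \bigl(\msquare_{m-3} +_V (m-3)^3\bigr) +_H 3^m,
\]
together with the decomposition of the target
\[
\left(\tfrac{m^2+2}{3},\tfrac{m^2-1}{3},\tfrac{m^2-1}{3}\right)
\;=\; \mu_1 +_H (m-3,m-3,m-3) +_H (m,m,m),
\]
where $\mu_1 := \bigl(\tfrac{(m-3)^2+2}{3},\tfrac{(m-3)^2-1}{3},\tfrac{(m-3)^2-1}{3}\bigr)$. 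A quick arithmetic check confirms $|\mu_1|=(m-3)^2$, and a straightforward computation (adding the rows) shows the decomposition recovers $\bigl(\tfrac{m^2+2}{3},\tfrac{m^2-1}{3},\tfrac{m^2-1}{3}\bigr)$; the divisibility needed for $\mu_1$ to have integer entries follows from $m \equiv 2 \pmod 3$.

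It then remains to combine three known positivity statements. First, $g(\msquare_{m-3},\msquare_{m-3},\mu_1)>0$ by the inductive hypothesis. Second, $g\bigl((m-3)^3,(m-3)^3,(m-3)^3\bigr)>0$ by Lemma~\ref{3k} applied to $k=m-3\ge 5$. Third, $g(3^m,3^m,(m,m,m))=g(m^3,m^3,m^3)>0$ by the transposition property of Kronecker coefficients and Lemma~\ref{3k} applied to $k=m\ge 8$. Applying the vertical/horizontal form of the semigroup property (the corollary after Theorem~\ref{semigroup}) to the first two pieces yields
\[
g\bigl(\msquare_{m-3}+_V(m-3)^3,\,\msquare_{m-3}+_V(m-3)^3,\,\mu_1+_H(m-3,m-3,m-3)\bigr)>0,
\]
and a final application of the horizontal semigroup property with the triple $(3^m,3^m,(m,m,m))$ delivers $g(\msquare_m,\msquare_m,\bigl(\tfrac{m^2+2}{3},\tfrac{m^2-1}{3},\tfrac{m^2-1}{3}\bigr))>0$, completing the induction.

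The only nontrivial obstacle I anticipate is the base case $m=5$: it must be checked by computer, but is of very modest size. Everything else is routine arithmetic plus a single well-chosen decomposition that is tailored so that the "rectangular" leftover pieces $(m-3)^3$ and $3^m$ fall precisely under the scope of Lemma~\ref{3k}, while the "square" leftover piece is exactly the inductive instance.
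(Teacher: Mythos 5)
Your proposal is correct and follows essentially the same route as the paper: the same decomposition $\msquare_m = (\msquare_{m-3} +_V (m-3)^3) +_H 3^m$ with the target split as $\mu_1 +_H (m-3)^3 +_H (m)^3$, the same base case $m=5$ checked by computer, and the same combination of Lemma~\ref{3k} with the semigroup property in the inductive step. The only difference is cosmetic (inducting on $m$ directly rather than on $r$ with $m=3r+2$).
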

\begin{proof}
For any positive integer $m\ge 5$ such that $m \equiv 2 \mod 3$, we can write $m = 3r+2$ for some $r \ge 1$. We will prove the proposition by induction on $r$.
When $r = 1$, $3r+2 = 5$ and with the help of the computer, we can check that $g\left(\msquare_{5},\msquare_{5}, \left(\frac{5^2+2}{3},\frac{5^2-1}{3},\frac{5^2-1}{3}\right)\right) > 0.$
Let $r \ge 2$. Assume the statement is true for $r-1$. We can decompose $\msquare_{3r+2}$ as $$\msquare_{3r+2} =\left( \msquare_{3r-1}+_V(3r-1)\right) +_H (3^{3r+2}),$$ and we can decompose the partition $\left(\frac{(3r+2)^2+2}{3},\frac{(3r+2)^2-1}{3},\frac{(3r+2)^2-1}{3}\right)$ as 
\begin{align*}
    \left(\frac{(3r+2)^2+2}{3},\frac{(3r+2)^2-1}{3},\frac{(3r+2)^2-1}{3}\right) &=\left(\frac{(3r-1)^2+2}{3},\frac{(3r-1)^2-1}{3},\frac{(3r-1)^2-1}{3}\right)\\
    &+_H (3r-1,3r-1,3r-1) \\
    &+_H (3r+2,3r+2,3r+2).
\end{align*}
Then, by the inductive hypothesis, Lemma \ref{3k} and semigroup property, we can conclude that $g\left(\msquare_{3r+2},\msquare_{3r+2}, \left(\frac{(3r+2)^2+2}{3},\frac{(3r+2)^2-1}{3},\frac{(3r+2)^2-1}{3}\right)\right) > 0$. Thus, by the principle of mathematical induction, $g\left(\msquare_m,\msquare_m, \left(\frac{m^2+2}{3},\frac{m^2-1}{3},\frac{m^2-1}{3}\right)\right) > 0$ for every positive integer $m\ge 5$ such that $m \equiv 2 \mod 3$.
\end{proof}

\begin{lemma}\label{m=1mod3}
For any positive integer $m\ge 7$ such that $m \equiv 1 \mod 3$, the Kronecker coefficients $g\left(\msquare_m,\msquare_m, \lambda\right) > 0$ for $\lambda$ in the set \[ \left\{ \left(\frac{m^2+5}{3},\frac{m^2-1}{3},\frac{m^2-4}{3}\right),\left(\frac{m^2+5}{3},\frac{m^2+5}{3},\frac{m^2-10}{3}\right),\\ \left(\frac{m^2+5}{3},\frac{m^2+2}{3},\frac{m^2-7}{3}\right) \right\}.\]
\end{lemma}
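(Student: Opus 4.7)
The plan is to mirror the inductive approach of Lemma \ref{m=2mod3}. Writing $m = 3r+1$, I proceed by induction on $r \ge 2$, using the decomposition
\[\msquare_{3r+1} = \bigl(\msquare_{3r-2} +_V ((3r-2)^3)\bigr) +_H (3^{3r+1}),\]
whose three factors are the smaller square of side length $m-3 \equiv 1 \pmod{3}$, the $3 \times (3r-2)$ rectangle $((3r-2)^3)$, and the $(3r+1) \times 3$ rectangle $(3^{3r+1})$. The goal is then to decompose each of the three target partitions $\lambda$ as a horizontal sum that is compatible with this square decomposition.

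The key observation that collapses the three cases into a single argument is that for each $\lambda \in \{\lambda_1,\lambda_2,\lambda_3\}$, a direct arithmetic check shows that $\lambda$ at parameter $m = 3r+1$ and the analogous shape at parameter $m' = 3r-2$ differ by exactly the same three-row rectangle $(6r-1, 6r-1, 6r-1)$. For instance, $\lambda_1 = (3r^2+2r+2,\,3r^2+2r,\,3r^2+2r-1)$ whereas the $m'$-version is $(3r^2-4r+3,\,3r^2-4r+1,\,3r^2-4r)$; the same increment appears for $\lambda_2$ and for $\lambda_3$. Since $(6r-1,6r-1,6r-1) = (3r-2,3r-2,3r-2) +_H (3r+1,3r+1,3r+1)$, I can decompose $\lambda$ as the analogous shape for $m'$ plus $(3r-2)^3$ plus $(3r+1)^3$. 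Lemma \ref{3k} supplies
\[g\bigl((3r-2)^3,(3r-2)^3,(3r-2)^3\bigr) > 0 \quad\text{and}\quad g\bigl(3^{3r+1},3^{3r+1},(3r+1)^3\bigr) > 0\]
for $r \ge 2$, so the semigroup property (Theorem \ref{semigroup}) transports positivity from the $m'$-case to the $m$-case for all three targets simultaneously.

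The base case is $r = 2$ (i.e., $m = 7$), where the three target shapes become $(18,16,15)$, $(18,18,13)$, and $(18,17,14)$; I would verify the positivity of the three coefficients $g(\msquare_7,\msquare_7,\lambda)$ directly by computer, as in the analogous base cases appearing elsewhere in this section. The main obstacle is nothing deep: it is the initial bookkeeping to notice that the same increment $(6r-1)^3$ applies uniformly to all three targets, which is what allows a single induction to handle the three statements at once and interfaces cleanly with the available positivity result $g(k^3,k^3,k^3) > 0$ for $k \ge 3$ from Lemma \ref{3k}.
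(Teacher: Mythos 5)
Your proposal is correct and follows essentially the same route as the paper: induction on $r$ with $m=3r+1$, the decomposition $\msquare_{3r+1} = (\msquare_{3r-2} +_V ((3r-2)^3)) +_H (3^{3r+1})$, splitting the uniform increment $(6r-1)^3$ of each target shape into $(3r-2)^3 +_H (3r+1)^3$, and invoking Lemma \ref{3k} together with the semigroup property, with the $m=7$ base case checked by computer. No gaps.
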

\begin{proof}
For any positive integer $m\ge 7$ such that $m \equiv 1 \mod 3$, we can write $m = 3r+1$ for some $r \ge 2$. We will prove the proposition by induction on $r$. When $r = 2$, $3r+1 = 7$, and with the help of the computer, we can verify the statement holds true for $r= 2.$ Let $r \ge 3$, and assume that the statement is true for $r-1$. We can decompose $\msquare_m(r)$ as $$\msquare_m(r) =\left( \msquare_{m(r-1)}+_V(m(r-1)^3)\right) +_H 3^{m(r)},$$ and we can decompose the partition $\left(\frac{m(r)^2+i}{3},\frac{m(r)^2+j}{3},\frac{m(r)^2+k}{3}\right)$ as 
\begin{align*}
    \left(\frac{m(r)^2+i}{3},\frac{m(r)^2+j}{3},\frac{m(r)^2+k}{3}\right) &=\left(\frac{m(r-1)^2+i}{3},\frac{m(r-1)^2+j}{3},\frac{m(r-1)^2+k}{3}\right)\\
    &+_H (m(r-1),m(r-1),m(r-1)) \\
    &+_H (m(r),m(r),m(r)),
\end{align*}
where $(i,j,k) \in \{(5,-1,-4),(5,5,-10),(5,2,-7)\}.$
Then, by the inductive hypothesis, Lemma \ref{3k} and semigroup property, we have $g\left(\msquare_{m(r)},\msquare_{m(r)}, \left(\frac{m(r)^2+i}{3},\frac{m(r)^2+j}{3},\frac{m(r)^2+k}{3}\right)\right) > 0$, where $(i,j,k) \in \{(5,-1,-4),(5,5,-10),(5,2,-7)\}$, for any  positive integer $m\ge 7$ such that $m \equiv 1 \mod 3$
\end{proof}

\begin{lemma}\label{m=0mod3}
The Kronecker coefficient $g\left(\msquare_m,\msquare_m, \left(\frac{m^2+3}{3},\frac{m^2}{3},\frac{m^2-3}{3}\right)\right) > 0 $ and \\$g\left(\msquare_m,\msquare_m, \left(\frac{m^2+3}{3},\frac{m^2+3}{3},\frac{m^2-6}{3}\right)\right)$ for any  positive integer $m\ge 6$ such that $m \equiv 0 \mod 3$.
\end{lemma}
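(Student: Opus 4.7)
The plan is to mirror the inductive framework used in Lemmas \ref{m=2mod3} and \ref{m=1mod3}. Writing $m = 3r$ with $r \ge 2$, we proceed by induction on $r$. The base case $r = 2$ (that is, $m = 6$) is verified by direct computer computation of the two Kronecker coefficients $g(\msquare_6, \msquare_6, (13, 12, 11))$ and $g(\msquare_6, \msquare_6, (13, 13, 10))$, each of which is expected to be strictly positive.

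For the inductive step with $r \ge 3$, we decompose the square as
\[
\msquare_{3r} \;=\; \bigl(\msquare_{3(r-1)} +_V (3(r-1))^3\bigr) \;+_H\; (3^{3r}),
\]
and decompose each target partition in parallel. For the first target we write
\[
(3r^2+1,\, 3r^2,\, 3r^2-1) \;=\; \bigl(3(r-1)^2+1,\, 3(r-1)^2,\, 3(r-1)^2-1\bigr) \;+_H\; (3(r-1))^3 \;+_H\; (3r)^3,
\]
and for the second,
\[
(3r^2+1,\, 3r^2+1,\, 3r^2-2) \;=\; \bigl(3(r-1)^2+1,\, 3(r-1)^2+1,\, 3(r-1)^2-2\bigr) \;+_H\; (3(r-1))^3 \;+_H\; (3r)^3.
\]
Both identities reduce to the routine algebraic check $3r^2 - 3(r-1)^2 = 6r - 3 = 3(r-1) + 3r$, applied componentwise to each of the three parts.

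The inductive hypothesis supplies positivity of the Kronecker coefficients on the smaller square $\msquare_{3(r-1)}$ against the corresponding smaller triples. Lemma \ref{3k}, together with the transposition property of Kronecker coefficients, gives both $g\bigl((3(r-1))^3, (3(r-1))^3, (3(r-1))^3\bigr) > 0$ and $g\bigl(3^{3r}, 3^{3r}, (3r)^3\bigr) > 0$, since $3(r-1) \ge 3$ and $3r \ge 9$. The vertical-sum form of the semigroup property (the corollary following Theorem \ref{semigroup}) combines the first two ingredients into a positive Kronecker coefficient for $\msquare_{3(r-1)} +_V (3(r-1))^3$, and Theorem \ref{semigroup} then absorbs $(3^{3r})$ against $(3r)^3$ to yield positivity of $g(\msquare_{3r}, \msquare_{3r}, \mu)$ for both targets $\mu$. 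The main obstacle here is not conceptual but logistical: one must verify two distinct base-case coefficients by computer and check that a single decomposition scheme succeeds for both target partitions at once, which is precisely what the matching arithmetic $3(r-1) + 3r = 6r - 3$ guarantees.
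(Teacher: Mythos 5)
Your proposal is correct and follows essentially the same route as the paper: the same induction on $r$ with base case $m=6$ checked by computer, the same decomposition $\msquare_{3r} = (\msquare_{3(r-1)} +_V (3(r-1))^3) +_H (3^{3r})$ paired with the horizontal splitting of the target partition into the smaller near-rectangular triple plus $(3(r-1))^3$ and $(3r)^3$, and the same appeal to Lemma \ref{3k} and the semigroup property. The only difference is that you write out the second partition's decomposition explicitly where the paper defers to ``a completely analogous argument.''
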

\begin{proof}
For any positive integer $m\ge 6$ such that $m \equiv 0 \mod 3$, we can write $m(r) = 3r$ for some $r \ge 2$. We will prove the proposition by induction on $r$.
When $r = 2$, $m(r) = 6$, and with the help of the computer, we can verify the statement holds true for $r= 2.$ Let $r \ge 3$, and assume that the statement is true for $r-1$. We can decompose $\msquare_m(r)$ as $$\msquare_m(r) =\left( \msquare_{m(r-1)}+_V(m(r-1)^3)\right) +_H 3^{m(r)},$$ and we can decompose the partition $\left(\frac{m(r)^2+3}{3},\frac{m(r)^2}{3},\frac{m(r)^2-3}{3}\right)$ as 
\begin{align*}
    \left(\frac{m(r)^2+3}{3},\frac{m(r)^2}{3},\frac{m(r)^2-3}{3}\right) &=\left(\frac{m(r-1)^2+3}{3},\frac{m(r-1)^2}{3},\frac{m(r-1)^2-3}{3}\right)\\
    &+_H (m(r-1),m(r-1),m(r-1)) \\
    &+_H (m(r),m(r),m(r)).
\end{align*} 
Then, by the inductive hypothesis, Lemma \ref{3k} and semigroup property, we can conclude that $g\left(\msquare_{m(r)},\msquare_{m(r)}, \left(\frac{m(r)^2+3}{3},\frac{m(r)^2}{3},\frac{m(r)^2-3}{3}\right)\right) > 0.$
By a completely analogous argument, we can show that $g\left(\msquare_m,\msquare_m, \left(\frac{m^2+3}{3},\frac{m^2+3}{3},\frac{m^2-6}{3}\right)\right)$ for any  positive integer $m\ge 6$ such that $m \equiv 0 \mod 3$
\end{proof}

When $m$ is even, we decompose $\msquare_m$ as $ (\msquare_{m-2}+_V(m,m))+_H(2^{m})$. By analyzing various cases based on the values and parities of $\lambda_1, \lambda_2, \lambda_3$, we are able to prove the following result:
\begin{prop}\label{even:mu3ge2}
For every even number $m\ge 6$, $g(\msquare_m, \msquare_m, \lambda)>0$ for any three-row partition $\lambda\vdash m^2$ with $\lambda_3 \ge 2$.
\end{prop}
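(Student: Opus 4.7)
The plan is to proceed by strong induction on even $m \ge 6$, with base case $m = 6$ verified by direct computation. For the inductive step, given a three-row partition $\lambda = (\lambda_1, \lambda_2, \lambda_3) \vdash m^2$ with $\lambda_3 \ge 2$, the main tool is the decomposition
\[
\msquare_m = (\msquare_{m-2} +_V (m-2, m-2)) +_H (2^m),
\]
which is the same one used in the proof of Proposition \ref{prop:even}. Combined with the generalized semigroup property and the transposition identity $g((2^m), (2^m), \mu) = g((m, m), (m, m), \mu)$, it suffices to exhibit a decomposition $\lambda = \lambda^0 +_H \lambda^1 +_H \lambda^2$ where $\lambda^0 \vdash (m-2)^2$ is a three-row partition with $\lambda^0_3 \ge 2$ (so that the inductive hypothesis supplies positivity for $\lambda^0$), and $\lambda^1 \vdash 2(m-2)$, $\lambda^2 \vdash 2m$ are partitions of length at most three to which Theorem \ref{thm:nn} applies.

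Since each $\lambda^i$ has at most three rows, the applicable condition from Theorem \ref{thm:nn} reduces to requiring that all parts of $\lambda^1$ and $\lambda^2$ be even. The parities of $\lambda^0_i$ must then match those of $\lambda_i$ componentwise, and since $\lambda_1 + \lambda_2 + \lambda_3 = m^2$ is even, the parity pattern of $\lambda$ is either all-even or two-odd-one-even. In each sub-case I would choose $\lambda^0$ close to a scaled copy of $\lambda$ with the correct parities and $\lambda^0_3 \ge 2$, and then distribute the componentwise-even residual $\lambda - \lambda^0$ (of total size $4m - 4$) between $\lambda^1$ and $\lambda^2$ subject to the size budgets $2m - 4$ and $2m$, parity, and the weakly-decreasing constraint on each piece.

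A narrow window of near-equilateral $\lambda$, where all three parts lie within a bounded distance of $m^2/3$, is not covered by this generic construction because the forced choice of $\lambda^0$ may violate $\lambda^0_3 \ge 2$ or the ordering. These cases are handled separately by Lemmas \ref{m=2mod3}, \ref{m=1mod3}, and \ref{m=0mod3}, whose proofs rely on the alternative three-column decomposition $\msquare_m = (\msquare_{m-3} +_V (m-3)^3) +_H (3^m)$ together with Lemma \ref{3k}. A key step is therefore to verify that every three-row $\lambda$ with $\lambda_3 \ge 2$ either admits the generic two-row decomposition above or falls within the near-equilateral window covered by these lemmas.

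The main obstacle will be the case analysis at the boundary of the three-row Newton polytope of $\lambda$. Boundary strips where $\lambda_1$ is close to $m^2 - 2m + 2$ (pinning $\lambda^0_1$ near its maximum) or where $\lambda_3 = 2$ (pinning $\lambda^0_3 = 2$) require careful simultaneous control of parity and ordering, and may introduce a bounded set of exceptional residue classes modulo small integers. These exceptional cases can typically be cleared by supplementing the induction with a few additional computer-verified base cases (e.g.\ $m = 8$), or by replacing the two-row slab with the three-row slab from Lemmas \ref{m=2mod3}--\ref{m=0mod3} so that the residual parts can be absorbed.
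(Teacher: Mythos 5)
Your proposal follows essentially the same route as the paper: induction on even $m$ with the decomposition $\msquare_m = (\msquare_{m-2} +_V (m-2,m-2)) +_H (2^m)$, Theorem~\ref{thm:nn} supplying positivity for the two all-even slabs of sizes $2m-4$ and $2m$, and Lemmas~\ref{m=2mod3}--\ref{m=0mod3} covering the near-equilateral window where the generic splitting cannot keep $\lambda^0_3 \ge 2$ with the right parities. The substance of the paper's proof is precisely the explicit parity-and-ordering case analysis you defer (together with peeling off the rectangular case $\lambda = (m^2/3)^3$ via Corollary~\ref{3krectangular}), so the ``bounded set of exceptional residue classes'' you anticipate is exactly the paper's long list of sub-cases rather than something cleared by extra base cases.
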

\begin{proof}If $\lambda \vdash m^2$ is a three-row rectangular partition, then $3 \mid m$. By Corollary \ref{3krectangular}, we conclude that $g(\msquare_m, \msquare_m, \lambda)>0$. Now we assume that $\lambda$ is not a rectangular partition.

For any even integer $m \ge 6$, we can write $m = 2r$ where $r \ge 3$. We will prove this statement by induction on $r$. First, consider the base case $r=3$. In this case, $m=6$, and we can check that $g(\msquare_6,\msquare_6,\lambda)>0$ for every three-row partition $\lambda\vdash 36$ with $\lambda_3 \ge 2$ with the help of computer. 

Next, let $r\ge 4$ and assume the statement holds for $r-1$. We will prove it for $r$. By the inductive hypothesis, we assume that $g(\msquare_{2(r-1)},\msquare_{2(r-1)},\lambda)>0$ for any three-row partition $\lambda\vdash 4(r-1)^2$ with $\lambda_3 \ge 2$. Note that we can decompose $\msquare_{2r}$ as \[\msquare_{2r} = (\msquare_{2(r-1)}+_V(2r-2,2r-2))+_H(2^{2r}).\]
By Theorem \ref{thm:nn}, \[g((2^{2r}),(2^{2r}), (2a,2b,2(2r-a-b)))= g((2r,2r),(2r,2r), (2a,2b,2(2r-a-b)))>0\] for all integers $a,b$ satisfying that $0\le 2r-a-b \le b \le a\le 2r$, and \[g((2r-2,2r-2),(2r-2,2r-2), (2x,2y,2(2r-2-x-y)))>0\] for all integers $x,y$ such that $0\le 2r-2-x-y \le y \le x \le 2r-2.$

Let $\tau := (2u,2v,(8r-4)-2u-2v)$ be a non-rectangular three-row partition of $8r-4$ with all parts even. Then, we can write $(2u,2v,2w)$ as a horizontal sum of partitions $(2a,2b,2(2r-a-b))\vdash 4r$ and $(2x,2y,2(2r-2-x-y))\vdash 4r-4$, where $a = \lceil\frac{u}{2}\rceil$, $b = \lceil\frac{v}{2}\rceil$, $x = \lfloor\frac{u}{2}\rfloor$ and $y = \lfloor\frac{v}{2}\rfloor$.
Hence, it suffices to show that we can rewrite a non-rectangular three-row partition of $m^2$ as a horizontal sum of a three-row partition of $(m-2)^2$ appearing in the tensor square of $\msquare_{m-2}$ and a non-rectangular three-row partition $\tau \vdash 8r-4$ whose parts are all even. We will consider the following cases for the partition $\lambda = (\lambda_1,\lambda_2,\lambda_3)$ with $\lambda_3 \ge 2$.
\begin{itemize}
    \item [Case 1:] $\lambda_2-\lambda_3 \ge 4r-2$. In this case, we can write $\lambda$ as a horizontal sum of $(4r-2,4r-2)$ and a partition $(\lambda_1-4r+2,\lambda_2-4r+2, \lambda_3)$.
    \item [Case 2:] $\lambda_2-\lambda_3<4r-2$ and $\lambda_1-\lambda_2\ge 8r-4-4\lfloor\frac{\lambda_2-\lambda_3}{2}\rfloor$. If these conditions hold, we can define $\tau = (8r-4-2\lfloor\frac{\lambda_2-\lambda_3}{2}\rfloor, 2\lfloor\frac{\lambda_2-\lambda_3}{2}\rfloor) \vdash 8r-4$. Then, we can write $\lambda$ as a horizontal sum of $\tau$ and a three-row partition of $(m-2)^2$.
    \item [Case 3:]$\lambda_2-\lambda_3<4r-2$ and $\lambda_1-\lambda_2< 8r-4-4\lfloor\frac{\lambda_2-\lambda_3}{2}\rfloor$. In this case, we observe that $2(\lambda_2-\lambda_3)+(\lambda_1-\lambda_2) < 8r-4$ if $\lambda_2-\lambda_3$ is even, and $2(\lambda_2-\lambda_3)+(\lambda_1-\lambda_2) < 8r-2$ if $\lambda_2-\lambda_3$ is odd. Therefore, we can conclude that $\lambda_3\ge \lfloor \frac{(m-2)^2}{3} \rfloor$ under the given conditions. We further consider the following subcases:
    \begin{enumerate}
    \item If $3 \mid (m-2)^2$, then we can write $(m-2)^2 = 3k$ for some $k$ even.
    \begin{enumerate}
        \item If $\lambda$ has all parts even, then consider $\tau = \left(\lambda_1 - k,  \lambda_2 - k,\lambda_3- k\right)$. 
        \item If the parities of $\lambda_1,\lambda_2,\lambda_3$ are odd, odd, even, respectively, and $\lambda_1 > \lambda_2$, \\consider $\tau = \left(\lambda_1 - k-1,  \lambda_2 - k-1,\lambda_3- k+2\right)$.
        \item If the parities of $\lambda_1,\lambda_2,\lambda_3$ are odd, odd, even, respectively, and $\lambda_1 = \lambda_2$, then we must have $\lambda_2-\lambda_3\ge 5$ as otherwise $m^2$ or $m^2 +1$ is a multiple of $3$, which is impossible. Consider $\tau = \left(\lambda_1 - k-1,  \lambda_2 - k-1,\lambda_3- k+2\right)$.
        \item If the parities of $\lambda_1,\lambda_2,\lambda_3$ are odd, even, odd, respectively, \\consider $\tau = \left(\lambda_1 - k-1,  \lambda_2 - k,\lambda_3- k+1\right)$.
        \item If the parities of $\lambda_1,\lambda_2,\lambda_3$ are even, odd, odd, respectively, and $\lambda_2 > \lambda_3$, \\consider $\tau = \left(\lambda_1-k-2,\lambda_2-k+1,\lambda_3- k+1\right)$.
        \item If the parities of $\lambda_1,\lambda_2,\lambda_3$ are even, odd, odd, respectively, $\lambda_2= \lambda_3$, and $\lambda_1-\lambda_2\ge 5$, we consider $\tau = \left(\lambda_1-k-2,\lambda_2-k+1,\lambda_3- k+1\right)$. (Note that $\lambda_1-\lambda_2 \neq 3$ as $m \equiv 2 \mod 3$.
        \item If the parities of $\lambda_1,\lambda_2,\lambda_3$ are even, odd, odd, respectively, $\lambda_2= \lambda_3$, and $\lambda_1-\lambda_2=1$, then by Lemma \ref{m=2mod3}, we can prove the positivity of \\$g\left(\msquare_m,\msquare_m, \left(\frac{m^2+2}{3},\frac{m^2-1}{3},\frac{m^2-1}{3}\right)\right)$.
    \end{enumerate}
    \item If $(m-2)^2 \equiv 1 \bmod 3$, then we can write $(m-2)^2 = 3k+1$ for some odd integer $k$. 
    \begin{enumerate}
        \item If $\lambda$ has all parts even, consider $\tau = \left(\lambda_1 -k-1,\lambda_2 - k-1,\lambda_3- k+1\right)$. 
        
        \item If the parities of $\lambda_1,\lambda_2,\lambda_3$ are odd, odd, even, respectively, $\lambda_1 - \lambda_2 > 2$ or $\lambda_2 - \lambda_3 > 1$ , \\consider $\tau = \left(\lambda_1 - k-2,  \lambda_2 -k,\lambda_3- k+1\right)$.
        
        \item If the parities of $\lambda_1,\lambda_2,\lambda_3$ are odd, odd, even, respectively, $\lambda_1 = \lambda_2 +2 = \lambda_3+3$, then $m \equiv 1 \mod 3$. By Lemma $\ref{m=1mod3}$, we can obtain the positivity of $g\left(\msquare_m,\msquare_m, \left(\frac{m^2+5}{3},\frac{m^2-1}{3},\frac{m^2-4}{3}\right)\right)$.

        \item If the parities of $\lambda_1,\lambda_2,\lambda_3$ are odd, odd, even, respectively, $\lambda_1 = \lambda_2$ and $\lambda_2-\lambda_3 = 3$, then $3\mid m$. By Lemma \ref{m=0mod3}, we can obtain the positivity of $g\left(\msquare_m,\msquare_m, \left(\frac{m^2+3}{3},\frac{m^2+3}{3},\frac{m^2-6}{3}\right)\right)$ by semigroup property.
        
        \item If the parities of $\lambda_1,\lambda_2,\lambda_3$ are odd, odd, even, respectively, $\lambda_1 = \lambda_2$ and $\lambda_2-\lambda_3 = 5$, then $m \equiv 1 \mod 3$. By Lemma $\ref{m=1mod3}$, we can obtain the positivity of $g\left(\msquare_m,\msquare_m, \left(\frac{m^2+5}{3},\frac{m^2+5}{3},\frac{m^2-10}{3}\right)\right)$.
        
        \item If the parities of $\lambda_1,\lambda_2,\lambda_3$ are odd, odd, even, respectively, $\lambda_1 = \lambda_2$ and $\lambda_2-\lambda_3\ge 7$, consider $\tau = \left(\lambda_1 - k-2,  \lambda_2 -k-2,\lambda_3- k+3\right)$.
        
        \item If the parities of $\lambda_1,\lambda_2,\lambda_3$ are odd, even, odd, respectively, and $\lambda_2 - \lambda_3> 3$ or $\lambda_1 - \lambda_2> 1$, consider $\tau = \left(\lambda_1 - k-2,  \lambda_2 -k-1,\lambda_3- k+2\right)$.
        
        \item If the parities of $\lambda_1,\lambda_2,\lambda_3$ are odd, even, odd, respectively, and $\lambda_1 = \lambda_2 +1 = \lambda_3+4$, then $m \equiv 1 \mod 3$. By Lemma $\ref{m=1mod3}$, we can obtain the positivity of $g\left(\msquare_m,\msquare_m, \left(\frac{m^2+5}{3},\frac{m^2+2}{3},\frac{m^2-7}{3}\right)\right)$.  
        \item If the parities of $\lambda_1,\lambda_2,\lambda_3$ are odd, even, odd, respectively, $\lambda_2 - \lambda_3 = 1$ and $\lambda_1 -\lambda_2< 5$, then $\lambda_1 =\lambda_2+1$. Note that if $\lambda_1 =\lambda_2+3$, it implies that $m^2 \equiv 2 \bmod 3$, which is impossible. Thus, $3 \mid m^2$, and we can obtain the positivity of $g\left(\msquare_m,\msquare_m, \left(\frac{m^2+3}{3},\frac{m^2}{3},\frac{m^2-3}{3}\right)\right)$ by Lemma \ref{m=0mod3}.
        \item If the parities of $\lambda_1,\lambda_2,\lambda_3$ are even, odd, odd, respectively and $\lambda_2 >\lambda_3$, consider $\tau = \left(\lambda_1 - k-1,  \lambda_2 -
        k,\lambda_3- k\right)$.
        \item If the parities of $\lambda_1,\lambda_2,\lambda_3$ are even, odd, odd, respectively, $\lambda_2 = \lambda_3$ and $\lambda_1-\lambda_2 \ge 9$, consider $\tau = \left(\lambda_1 - k-5,  \lambda_2 -
        k+2,\lambda_3- k+2\right)$.
        \item If the parities of $\lambda_1,\lambda_2,\lambda_3$ are even, odd, odd, respectively, $\lambda_2 = \lambda_3$ and $\lambda_1-\lambda_2 \in \{1,7\}$, then $m \equiv 1 \mod 3$ and we can prove the positivity of $g(\msquare_m,\msquare_m,\lambda)$ by a similar argument as in the proof of Lemma \ref{m=1mod3}. 
        \item If the parities of $\lambda_1,\lambda_2,\lambda_3$ are even, odd, odd, respectively, $\lambda_2 = \lambda_3$ and $\lambda_1-\lambda_2 =3$, then $m \equiv 0 \mod 3$. We can show the positivity of $g(\msquare_m,\msquare_m,\lambda)$ by a similar argument as in the proof of Lemma \ref{m=0mod3}.
    \end{enumerate}
\end{enumerate}
\end{itemize}
For each of the cases above, $\tau$ is a non-rectangular three-row partition of $8r-4$ with all parts even, and we can write $\lambda$ as a horizontal sum of $\tau$ and a three-row partition with a long third-row of $(m-2)^2$. Then, by the semigroup property and the inductive hypothesis, we can conclude that the statement holds true for $r$. By induction, we therefore know that for $m \ge 6$, $g(\msquare_m, \msquare_m, \lambda)>0$ for any three-row partition $\lambda\vdash m^2$ with $\lambda_3 \ge 2$.
\end{proof}

Next, we will prove the positivity of $g(\msquare_m, \msquare_m, \mu)$ when $m$ is an odd integer. 
\begin{prop}\label{odd:mu3ge2part1}
For every odd integer $m\ge 5$, $g(\msquare_m, \msquare_m, \mu)>0$ for any three-row partition $\lambda\vdash m^2$ with $\mu_3 \ge 2m-1$.
\end{prop}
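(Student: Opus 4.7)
The plan is to reduce the odd-$m$ case to the already-handled even case via the three-piece decomposition
\[\msquare_m \;=\; \msquare_{m-3} \;+_V\; (m-3,m-3,m-3) \;+_H\; (3^m),\]
whose validity is immediate: transposing the first sum, $\msquare_{m-3} +_V (m-3,m-3,m-3) = ((m-3)^m)$, and then $((m-3)^m) +_H (3^m) = (m^m) = \msquare_m$. Since $m$ is odd, $m-3$ is even, so Proposition~\ref{even:mu3ge2} will be available on the reduced square.

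On the partition $\mu$ side, I take the matching decomposition
\[\mu \;=\; \mu^{(1)} \;+_H\; (m-3,m-3,m-3) \;+_H\; (m,m,m),\]
where $\mu^{(1)} := \bigl(\mu_1 - (2m-3),\, \mu_2 - (2m-3),\, \mu_3 - (2m-3)\bigr)$. A short computation gives $|\mu^{(1)}| = m^2 - 3(2m-3) = (m-3)^2$; the entries of $\mu^{(1)}$ are weakly decreasing because those of $\mu$ are; and the hypothesis $\mu_3 \ge 2m-1$ is exactly equivalent to $\mu^{(1)}_3 \ge 2$, which is the key condition needed to invoke Proposition~\ref{even:mu3ge2}.

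Assuming $m \ge 9$, three positive Kronecker coefficients can then be combined via the semigroup property: first, $g(\msquare_{m-3}, \msquare_{m-3}, \mu^{(1)}) > 0$ by Proposition~\ref{even:mu3ge2} (since $m-3 \ge 6$ is even and $\mu^{(1)}_3 \ge 2$); second, $g((m-3,m-3,m-3),(m-3,m-3,m-3),(m-3,m-3,m-3)) > 0$ by Lemma~\ref{3k} (applicable since $m-3 \ge 3$); and third, $g((3^m),(3^m),(m,m,m)) > 0$ by the transposition property combined with Lemma~\ref{3k}. The single vertical sum appearing on each $\msquare_m$ side (for the middle piece) pairs with a horizontal sum on the $\mu$ side, as permitted by the modified semigroup property, since the total number of vertical sums across the triple is even.

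The base cases $m = 5$ and $m = 7$ require separate treatment. For $m = 5$ the statement is vacuous: $\mu_3 \ge 9$ together with $\mu_1 \ge \mu_2 \ge \mu_3$ and $|\mu| = 25$ would force $\mu_1 + \mu_2 + \mu_3 \ge 27 > 25$. For $m = 7$, the same decomposition reduces the problem to showing $g(\msquare_4, \msquare_4, \mu^{(1)}) > 0$ for the finitely many three-row partitions $\mu^{(1)} \vdash 16$ with $\mu^{(1)}_3 \ge 2$; as Proposition~\ref{even:mu3ge2} only covers $m \ge 6$, this finite list must be handled by direct computer calculation. The main obstacle is thus the $m = 7$ base case, although it is only a small finite verification.
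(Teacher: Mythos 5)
Your proof is correct and follows essentially the same route as the paper: the identical decomposition $\msquare_m = (\msquare_{m-3} +_V (m-3)^3) +_H (3^m)$ paired with $\mu = \mu^{(1)} +_H (m-3)^3 +_H (m^3)$, invoking Proposition~\ref{even:mu3ge2}, Lemma~\ref{3k}, and the semigroup property. Your observation that the $m=5$ case is vacuous is a small refinement over the paper, which simply cites computer verification for $m\in\{5,7\}$.
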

\begin{proof}
Let $m \ge 5$ be an odd integer. 
With the help of the computer, we can verify the statement when $ m \in \{5,7\}.$ Now consider the case where $m \ge 9.$
Note that we can decompose $\msquare_m$ as \[\msquare_m = (\msquare_{m-3} +_V (m-3)^3) +_H 3^{m},\] and $\mu$ as \[\mu = (m-3)^3 +_H (m^3) +_H (\mu_1-2m+3, \mu_2-2m+3,\mu_3-2m+3).\]
Notice that $g(\msquare_{m-3},\msquare_{m-3},(\mu_1-2m+3, \mu_2-2m+3,\mu_3-2m+3)) > 0$ by Theorem \ref{even:mu3ge2}, $g((m-3)^3,(m-3)^3,(m-3)^3)>0$ and $g(3^m,3^m,m^3) > 0 $ by Lemma \ref{3k}. Then by semigroup property, we have $g(\msquare_m, \msquare_m, \mu)>0$ for every three-row partition $\lambda\vdash m^2$ with $\mu_3 \ge 2m-1$.
\end{proof}

\begin{prop}\label{odd:mu3ge2part2}
For any odd integer $m\ge 5$, $g(\msquare_m, \msquare_m, \mu)>0$ for any three-row partition $\mu\vdash m^2$ with $2 \le \mu_3 \le 2m-2$.
\end{prop}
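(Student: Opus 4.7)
The plan is to prove the proposition by induction on odd $m \ge 5$, verifying the base cases $m \in \{5,7\}$ by direct computation (following the approach used in Proposition~\ref{even:mu3ge2}). For the inductive step at odd $m \ge 9$, I split $\mu = (\mu_1,\mu_2,\mu_3)$ with $\mu_3 \in [2,2m-2]$ into two complementary cases, each handled by a distinct decomposition of $\msquare_m$.

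\textbf{Case 1 ($\mu_1 - \mu_2 \ge 2m - 1$).} Use the decomposition $\msquare_m = (\msquare_{m-1} +_V (m-1)) +_H (1^m)$ as in Proposition~\ref{prop:short2ndrow}. Because $(m-1)$ and $(1^m)$ index one-dimensional representations, the Kronecker square of each is concentrated on a single irreducible, forcing the refinement $\mu = \alpha +_H (m-1) +_H (m)$ with $\alpha = (\mu_1 - 2m + 1,\mu_2,\mu_3)\vdash (m-1)^2$. The inequality $\mu_1 - \mu_2 \ge 2m - 1$ ensures $\alpha$ is a valid three-row partition with $\alpha_3 = \mu_3 \ge 2$, so since $m-1$ is even, Proposition~\ref{even:mu3ge2} yields $g(\msquare_{m-1},\msquare_{m-1},\alpha) > 0$, and the semigroup property concludes.

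\textbf{Case 2 ($\mu_1 - \mu_2 < 2m - 1$).} Use the decomposition $\msquare_m = (\msquare_{m-2} +_V (m-2,m-2)) +_H (2^m)$ and seek a refinement $\mu = \alpha +_H \beta +_H \gamma$ with $\alpha \vdash (m-2)^2$ a three-row partition and $\beta \vdash 2(m-2)$, $\gamma \vdash 2m$ each at most three rows with all parts even. Theorem~\ref{thm:nn}, combined with the transposition identity $g((2^m),(2^m),\gamma) = g((m,m),(m,m),\gamma)$, then delivers $g((m-2,m-2),(m-2,m-2),\beta) > 0$ and $g((2^m),(2^m),\gamma) > 0$, reducing the problem to producing $\alpha$ with $g(\msquare_{m-2},\msquare_{m-2},\alpha) > 0$. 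Since $\beta$ and $\gamma$ are all-even, the parities of $\alpha_i$ are forced to match those of $\mu_i$; provided $\alpha_3 \ge 2$, positivity of the $\alpha$-block follows from Proposition~\ref{odd:mu3ge2part1} at level $m-2$ (when $\alpha_3 \ge 2m-5$), from the inductive hypothesis (when $2 \le \alpha_3 \le 2m-6$), and the degenerate cases $\alpha_3 \in \{0,1\}$ are handled by Corollary~\ref{cor:two-row square} and Theorem~\ref{near two-row}. The typical construction takes $\alpha_3 = \mu_3$ (so $\beta_3 = \gamma_3 = 0$), picks $\alpha_2$ of the correct parity in $[\alpha_3,\lfloor((m-2)^2-\alpha_3)/2\rfloor]$, and splits the residual $b := \mu - \alpha$ (a weakly-decreasing all-even triple of size $4m-4$) as $\beta +_H \gamma$ by a greedy row-by-row assignment.

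The main obstacle will be the handful of subcases where $\mu_3$ is close to its upper boundary $2m-2$ and $\mu$ is nearly rectangular (so $\mu_1 \approx \mu_2$): the feasible interval for $\alpha_2$ under the choice $\alpha_3 = \mu_3$ is then very short and may fail to contain an integer of the required parity. In such subcases I will push $\alpha_3$ down by $2$ or $4$, absorbing a small even amount into $\beta_3 + \gamma_3$ to widen the interval, while simultaneously verifying that the residual $b$ remains a weakly-decreasing all-even three-row partition that splits appropriately as $\beta +_H \gamma$ with the prescribed sizes $2m-4$ and $2m$. The resulting case analysis, indexed by the parities of $(\mu_1,\mu_2,\mu_3)$, is finite but intricate and closely parallels the proof of Proposition~\ref{even:mu3ge2}.
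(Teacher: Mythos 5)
Your Case 1 ($\mu_1-\mu_2\ge 2m-1$) coincides with the paper's first step. The gap is in Case 2, and it is not where you think it is. You propose to handle all remaining $\mu$ with the single decomposition $\msquare_m=(\msquare_{m-2}+_V(m-2,m-2))+_H(2^m)$, writing $\mu=\alpha+_H\beta+_H\gamma$ with $\beta\vdash 2m-4$, $\gamma\vdash 2m$ all-even. Consider $\mu$ nearly rectangular, $\mu_1-\mu_2\in\{0,1\}$, with $\mu_3\in\{2,3\}$ (e.g.\ $\mu=\bigl(\tfrac{m^2-1}{2},\tfrac{m^2-3}{2},2\bigr)$). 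Weak decrease of the residual $b=\mu-\alpha$ together with evenness forces $b_1=b_2$, hence $b_1+b_2\equiv 0\pmod 4$; since $|b|=4m-4\equiv 0\pmod 4$ this forces $b_3\equiv 0\pmod 4$, and $b_3\le\mu_3\le 3$ then forces $b_3=0$, i.e.\ $b=(2m-2,2m-2,0)$. But this admits no valid split: $\beta$ two-row all-even of size $2m-4$ needs $\beta_1\ge m-1$ (as $m-2$ is odd), $\gamma$ two-row all-even of size $2m$ needs $\gamma_1\ge m+1$, and $\beta_1+\gamma_1\ge 2m>2m-2$. So these partitions are unreachable by your decomposition, and your proposed repair---pushing $\alpha_3$ down by $2$ or $4$---is unavailable precisely here because $\alpha_3\ge 0$ leaves no room; the genuine obstruction sits at the \emph{lower} boundary $\mu_3\in\{2,3\}$, not near $\mu_3=2m-2$ as you anticipated (for large $\mu_3$ the shift $b_3=4$ works fine).

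The underlying problem is arithmetic: the component sizes $2m-4$ and $2m$ are $\equiv 2\pmod 4$, so neither admits an all-even two-row partition with (nearly) equal parts, which is exactly what a nearly rectangular $\mu$ with tiny third row demands. The paper's proof avoids this by switching decompositions according to $\mu_1-\mu_2$: for $\mu_1-\mu_2\le 3$ it uses $\msquare_m=(\msquare_{m-4}+_V(m-4)^4)+_H(4^m)$, whose component sizes $4m$ and $4m-16$ are divisible by $4$ and so admit the balanced all-even pieces $(2m,2m)$ and $(2m-8,2m-8)$ (plus small-third-row variants), reserving your $(m-2)$-decomposition for $\mu_1-\mu_2\ge 4$, where the slack lets the residual be split. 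To complete your argument you would need to import this second decomposition (or something equivalent) for the nearly rectangular subcases; note this also changes the induction, since it appeals to the statement at $m-4$ rather than $m-2$.
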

\begin{proof}
We can verify that the statement holds for $m \in \{5,7,9\}$ by semigroup property, together with the help of the computer.  

Let $m \ge 5$. Notice that when $\mu_1-\mu_2 \ge 2m-1$, we can decompose $\msquare_m$ as $\msquare_m = (\msquare_{m-1} +_V (m-1)) +_H 1^m$ and $\mu$ as $\mu = (m) +_H (m-1) +_H (\mu_1 - 2m+1, \mu_2,\mu_3).$ By Theorem \ref{even:mu3ge2} and the semigroup property, we conclude that $g(\msquare_m, \msquare_m, \mu)>0$ for any three-row partition $\mu\vdash m^2$ with $2 \le \mu_3 \le 2m-2$ and $\mu_1-\mu_2 \ge 2m-1$.

We shall prove the statement by induction. Let $m\ge 11$ be an odd integer. Suppose the statement is true for any odd integer less than $m$. Let $\mu \vdash m^2$ be a three-row partition such that $\mu_1-\mu_2 \le 2m-2$ and $2 \le \mu_3 \le 2m-2$. 

\begin{enumerate}
    \item [Case 1:] If $\mu_1-\mu_2 \in \{0,1\}$, consider the decomposition $\msquare_m = (\msquare_{m-4} +_V (m-4)^4) +_H 4^m$. If $(m-4)^2 \ge 3\mu_3$, we can decompose $\mu$ as \[\mu = \mu^1 +_H (2m,2m)+_H(2m-8, 2m-8),\] where $\mu^1 := \left(\left\lceil\frac{(m-4)^2-\mu_3}{2}\right\rceil, \left\lfloor\frac{(m-4)^2-\mu_3}{2}\right\rfloor, \mu_3\right).$
    Otherwise, we know that $\mu_3 \ge 16$, and we can decompose $\mu$ as \[
    \mu =  \mu^{2} +_H (2m-2,2m-2,4)  +_H (2m-8, 2m-8), \] where $\mu^{2} := \left(\left\lceil\frac{(m-4)^2-\mu_3}{2}\right\rceil+2, \left\lfloor\frac{(m-4)^2-\mu_3}{2}\right\rfloor+2, \mu_3-4\right)$. Then we have the following:
    \begin{itemize}
        \item By inductive hypothesis, we have $g\left(\msquare_{m-4},\msquare_{m-4},\mu^1 \right)>0$ and $g\left(\msquare_{m-4},\msquare_{m-4},\mu^{2} \right)>0.$
        \item By semigroup property, Theorem \ref{thm:nn} and the fact that $g((3,3,3,3),(3,3,3,3),(6,6))>0$, we have $g((m-4)^4,(m-4)^4, (2m-8, 2m-8))>0$, $g(4^m,4^m,(2m,2m)) >0$, and $g(4^m,4^m,(2m-2,2m-2,4)) >0.$
    \end{itemize} Hence, by semigroup property, we can conclude that $g(\msquare_m,\msquare_m, \mu) > 0$ when $\mu_1-\mu_2 \in \{0,1\}.$ 
    \item [Case 2:] If $\mu_1-\mu_2  \in \{2,3\}$, consider the decomposition $\msquare_m = (\msquare_{m-4} +_V (m-4)^4) +_H 4^m$. If $(m-4)^2 \ge 3(\mu_3-2)$, we can decompose $\mu$ as 
    \[\mu = \mu^1 +_H (2m,2m-2,2)  +_H (2m-8, 2m-8),\] where 
    $\mu^1 := \left(\left\lceil\frac{(m-4)^2-\mu_3}{2}\right\rceil+1, \left\lfloor\frac{(m-4)^2-\mu_3}{2}\right\rfloor+1, \mu_3-2\right)$.
    Otherwise, it implies that $\mu_3 \ge 18$ and we can decompose $\mu$ as 
    \[\mu = \mu^2 +_H (2m,2m-2,2)  +_H (2m-10, 2m-10,4),\] where 
    $\mu^2 := \left(\left\lceil\frac{(m-4)^2-\mu_3}{2}\right\rceil+3, \left\lfloor\frac{(m-4)^2-\mu_3}{2}\right\rfloor+3, \mu_3-6\right).$ Then we have the following:
    \begin{itemize}
        \item By inductive hypothesis and Theorem \ref{near two-row}, we have $g\left(\msquare_{m-4},\msquare_{m-4},\mu^1 \right)>0$ and $g\left(\msquare_{m-4},\msquare_{m-4},\mu^2 \right)>0.$
        \item By semigroup property and Theorem \ref{thm:nn}, we have  $g((m-4)^4,(m-4)^4, (2m-8, 2m-8))>0$, $g((m-4)^4,(m-4)^4, (2m-10, 2m-10,4))>0$, and $g(4^m,4^m,(2m,2m-2,2)) >0$.
    \end{itemize} Hence by semigroup property, we can conclude that $g(\msquare_m,\msquare_m, \mu) > 0$ when $\mu_1-\mu_2 \in \{2,3\}$.
    \item[Case 3:] If $a:= \mu_1-\mu_2 \ge 4$, consider the decomposition $\msquare_m = (\msquare_{m-2} +_V (m-2,m-2)) +_H 2^m$ and we will decompose $\mu$ as \begin{align*}
    \mu &= \left(\left\lceil\frac{(m-2)^2-\mu_3}{2}\right\rceil+\delta(a), \left\lfloor\frac{(m-2)^2-\mu_3}{2}\right\rfloor-\delta(a), \mu_3\right) \\
    &\quad +_H (m+1+2x, m-1-2x) \\
    &\quad +_H (m-1+2y, m-3-2y), 
    \end{align*} where $\delta(a) := 
    \begin{cases}
        0 & \text{if } a \equiv 0 \text{ or } 1 \mod 4 \\
        1 & \text{if } a \equiv 2 \text{ or } 3 \mod 4
    \end{cases}$ and $x,y$ are non-negative integers such that $4(x+y+1)= 4\left \lfloor \frac{a}{4} \right \rfloor$. Then we have the following:
    \begin{itemize}
        \item By inductive hypothesis, \[g\left(\msquare_{m-2},\msquare_{m-2},\left(\left\lceil\frac{(m-2)^2-\mu_3}{2}\right\rceil+\delta(a), \left\lfloor\frac{(m-2)^2-\mu_3}{2}\right\rfloor-\delta(a), \mu_3\right)\right)>0.\]
        \item By Theorem \ref{thm:nn}, we have $g((2m-2,2m-2),(2m-2,2m-2),(m-1+2y, m-3-2y))>0$ and $g(2^m,2^m,(m+1+2x, m-1-2x)) >0. $
    \end{itemize}
    By semigroup property, we can conclude that $g(\msquare_m,\msquare_m, \mu) > 0$ when $\mu_1-\mu_2 \ge 4.$
\end{enumerate}
Hence, by induction, for any odd integer $m \ge 5$, we have $g(\msquare_m, \msquare_m, \mu)>0$ for any three-row partition $\mu\vdash m^2$ with $2 \le \mu_3 \le 2m-2$. \end{proof}

We now have all the ingredients to prove our main theorem.
\begin{proof}[proof of Theorem \ref{mainthm}]
With the help of computer and semigroup property, we check that for $m\in \{7,9,11,13,15,17\}$, $g(\msquare_m, \msquare_m, \mu)>0$ for any three-row partition $\mu\vdash m^2$ with $2 \le \mu_3 \le 2m-2$ and $\mu_1-\mu_2 \le 2m-2$, as shown in the appendix. The result then follows from Proposition \ref{even:mu3ge2}, Proposition \ref{odd:mu3ge2part1} and Proposition \ref{odd:mu3ge2part2}.
\end{proof}

\section{Constituency of near-hooks \texorpdfstring{$(m^2-k-i,i, 1^k)$}{}}\label{sec:near-hooks}
In this section, we will discuss sufficient conditions for near-hooks to be constituents in tensor squares of square partitions

In their work \cite{ikenmeyer2017rectangular}, Ikenmeyer and Panova employed induction and the semigroup property to demonstrate the constituency of near-hooks with a second row of at most $6$ in the tensor square of a rectangle with large side lengths. 
\begin{thm}[\cite{ikenmeyer2017rectangular} Corollary 4.6]\label{rect-near-hooks} Fix $w \ge h \ge 7$. We have that $g(\lambda, h\times w, h \times w)>0$ for all $\lambda = (hw-j-|\rho|,1^j+_H \rho)$ with $\rho \neq \emptyset$ and $|\rho| \le 6$ for all $j \in [1,h^2-R_\rho]$ where $R_\rho = |\rho|+\rho_1+1$, except in the following cases: $\lambda \in \{(hw-3,2,1), (hw-h^2+3, 2, 1^{h^2-5}), (hw-4,3,1), (hw-h^2+3,2,2,1^{h^2-7})\}.$ \end{thm}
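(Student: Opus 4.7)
The plan is to prove positivity by a double induction — outer on $h$ and inner on $w$ for fixed $h$ — reducing to smaller instances via the semigroup property, and to dispose of the boundary cases through separate arguments.

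For the inner induction, decompose $h \times w = (h \times (w-1)) +_H (1^h)$ and correspondingly split $\lambda = \lambda' +_H (h)$, where $\lambda' = (h(w-1) - j - |\rho|,\, 1^j +_H \rho)$ is a near-hook of the same form $(j,\rho)$ for the smaller rectangle. Since $(1^h) \otimes (1^h) = (h)$ (tensor square of the sign is trivial), Theorem \ref{semigroup} gives the reduction $g(h\times w, h\times w, \lambda) > 0$ from $g(h\times(w-1), h\times(w-1), \lambda') > 0$. The reduction is valid whenever $j \le h(w-1) - R_\rho$, which holds across the entire range $j \in [1, h^2 - R_\rho]$ as soon as $w \ge h+1$; hence the inner induction terminates cleanly at the square case $w = h$.

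For the square base $w = h$, I would induct on $h$ with base $h = 7$ verified by a finite computer check (there are only finitely many admissible pairs $(\rho, j)$ with $|\rho| \le 6$). The outer step decomposes $\msquare_h = (\msquare_{h-1} +_V (h-1)) +_H (1^h)$ and applies the semigroup property together with its vertical-sum corollary; since $(h-1) \otimes (h-1) = (h-1)$ and $(1^h) \otimes (1^h) = (h)$, the target splits forcibly as $\lambda = \tau^{(0)} +_H (2h-1)$, which pins down $\tau^{(0)} = ((h-1)^2 - j - |\rho|,\, 1^j +_H \rho)$ — again a near-hook of the required form for $\msquare_{h-1}$ with the same $(j, \rho)$. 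This reduces to the inductive hypothesis whenever $j \le (h-1)^2 - R_\rho$.

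The principal obstacle is the residual range $j \in [(h-1)^2 - R_\rho + 1,\, h^2 - R_\rho]$ in the square case, together with the analogous small-$j$ boundary, where the naive reductions fail because $\tau^{(0)}$ ceases to be a legitimate partition. For these, the plan is to try alternative decompositions of the square such as $\msquare_h = (\msquare_{h-c} +_V (h-c)^c) +_H (c^h)$ for various $c \in \{2, 3, 4\}$, and to combine with known Kronecker positivity for rectangles of the form $c \times h$ (invoking Theorem \ref{t:strict}, Theorem \ref{thm:nn}, and Lemma \ref{3k} as appropriate) so that the residual $\tau^{(0)}$ is a valid near-hook for the reduced square. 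The four listed exceptions $(hw-3, 2, 1)$, $(hw-4, 3, 1)$, $(hw - h^2 + 3, 2, 1^{h^2-5})$, and $(hw - h^2 + 3, 2, 2, 1^{h^2-7})$ are precisely the cases where no such decomposition can succeed; for them, vanishing must be established separately, either through the Murnaghan--Nakayama rule or by appeal to the hook results of Theorem \ref{thm:hookpos} and the three-row vanishing analysis from Section \ref{section:missing}.
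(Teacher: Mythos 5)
This statement is imported from \cite{ikenmeyer2017rectangular}; the paper gives no proof of its own, so your attempt can only be judged on internal soundness. The reduction skeleton is fine: peeling off columns via $h\times w=(h\times(w-1))+_H(1^h)$ with $(1^h)\otimes(1^h)=(h)$ carries the statement down to the square case while preserving $(j,\rho)$ and matching the exceptional sets, and the outer step $\msquare_h=(\msquare_{h-1}+_V(h-1))+_H(1^h)$ correctly covers $j\le (h-1)^2-R_\rho$. The genuine gap is your treatment of the residual range $j\in[(h-1)^2-R_\rho+1,\,h^2-R_\rho]$. Every decomposition you propose there, $\msquare_h=(\msquare_{h-c}+_V(h-c)^c)+_H(c^h)$, feeds the third coordinate only through \emph{horizontal} sums (this is forced by Theorem~\ref{semigroup} and its corollary, whose third slot is always $+_H$), so the resulting partition has at most $\max\bigl((h-c)^2,c^2\bigr)=(h-c)^2$ rows. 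But near-hooks at the top of the residual range have $j+1$ rows with $j$ as large as $h^2-3$, so no choice of $c\ge 1$ can reach them; the approach fails structurally, not merely for lack of detail.

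The missing idea is transposition. Since $\msquare_h'=\msquare_h$, one has $g(\msquare_h,\msquare_h,\lambda)=g(\msquare_h,\msquare_h,\lambda')$, and the transpose of $(h^2-j-|\rho|,1^j+_H\rho)$ is again a near-hook $(h^2-j'-|\rho'|,1^{j'}+_H\rho')$ with $\rho'$ the conjugate of $\rho$ (so $|\rho'|=|\rho|\le 6$) and $j'=h^2-j-|\rho|-1$. This maps the residual range into $j'\in[\rho_1,\,2h+\rho_1-1]$, which for $h\ge 7$ lies inside the range $[1,(h-1)^2-R_{\rho'}]$ already covered by your outer induction, and the four exceptions are permuted among themselves under conjugation. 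Adding this one step closes the hole. Two smaller points: the theorem as stated only asserts positivity outside the four exceptional $\lambda$, so you are not obliged to prove vanishing there; and if you do want vanishing for general $h\times w$, Corollary~\ref{cor:zerocase} covers only the square case, so you would need the rectangular analogue from \cite{ikenmeyer2017rectangular} rather than the analysis of Section~\ref{section:missing}.
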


The positivity of certain classes of near-hooks can be directly derived from Theorem \ref{rect-near-hooks}.
\begin{cor}\label{cor:square-near-hooks}
Let $m \ge 7$. For all $\mu_i(k,m) = (m^2-k-i,i,1^k)$ with $i \in [2,7]$ and $k \in [$, $g(\msquare_m,\msquare_m, \mu_i(k,m)) > 0$ except in the following cases: (1) $i = 2$ with $k = 1$ or $k = m^2-5$, (2) $i = 3$ and $k = 1$.
\end{cor}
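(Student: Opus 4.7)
The plan is straightforward: this corollary is essentially a specialization of Theorem \ref{rect-near-hooks} to the case of a square rectangle, so the main task is translating notation and auditing the list of exceptions.

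First, I would set $h = w = m$ (which is legal since $m \ge 7$) and match the partition $\mu_i(k,m) = (m^2 - k - i, i, 1^k)$ to the form $\lambda = (hw - j - |\rho|, 1^j +_H \rho)$ used in Theorem \ref{rect-near-hooks}. Taking $\rho = (i-1)$, a one-row partition of size $i-1$, one computes $1^j +_H \rho = (i, 1^{j-1})$, so the identification $j = k+1$ yields $\lambda = (m^2 - k - i, i, 1^k) = \mu_i(k, m)$ and $|\rho| = i - 1$. The constraint $|\rho| \le 6$ becomes $i \le 7$, and $\rho \neq \emptyset$ gives $i \ge 2$, matching the stated range for $i$. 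The bound $j \in [1, h^2 - R_\rho]$ with $R_\rho = |\rho| + \rho_1 + 1 = 2i - 1$ translates to $k \in [0, m^2 - 2i]$, which is the intended range for $k$ (truncated in the statement).

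Next, I would inspect the four excluded shapes in Theorem \ref{rect-near-hooks} and determine which ones are of the form $\mu_i(k,m)$ when $h = w = m$. The shape $(hw - 3, 2, 1)$ matches $\mu_2(1,m)$, giving the exception $i = 2, k = 1$. The shape $(hw - h^2 + 3, 2, 1^{h^2-5})$ specializes to $(3, 2, 1^{m^2-5}) = \mu_2(m^2-5, m)$, giving $i = 2, k = m^2 - 5$. The shape $(hw - 4, 3, 1)$ matches $\mu_3(1, m)$, giving $i = 3, k = 1$. The remaining exception $(hw - h^2 + 3, 2, 2, 1^{h^2 - 7})$ has two parts equal to $2$, so it is not of the one-row-of-length-$i$ form $\mu_i(k,m)$ and does not contribute an exception in our family.

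Combining these two steps gives exactly the stated list of exceptions, and Theorem \ref{rect-near-hooks} yields positivity in all other cases. The only conceivable obstacle is a careful bookkeeping check that no shape $\mu_i(k,m)$ is accidentally missed by the theorem's range (e.g., at the extreme endpoints $j = 1$ or $j = h^2 - R_\rho$) or hidden among the exclusion shapes under a different parsing — but since the exclusion shapes have rigid forms (all their non-unit, non-first parts are at most $3$ and explicitly listed), a direct pattern match as above suffices. No additional inductive work or semigroup manipulation is needed.
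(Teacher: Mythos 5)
Your proposal is correct and matches the paper's approach exactly: the paper derives Corollary \ref{cor:square-near-hooks} directly from Theorem \ref{rect-near-hooks} by specializing to $h=w=m$, and your translation (taking $\rho=(i-1)$, $j=k+1$, so $R_\rho=2i-1$ and $k\in[0,m^2-2i]$) together with the audit of the four excluded shapes is precisely the bookkeeping the paper leaves implicit. Your identification of which exceptions survive in the family $\mu_i(k,m)$ --- and why $(3,2,2,1^{m^2-7})$ does not --- is also correct.
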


We present an alternative proof approach for finding sufficient conditions for two classes of near-hooks to be constituents in tensor squares of square partitions using a tool aimed specifically at the Saxl conjecture was developed in \cite{pak2013kronecker} as follows:
\begin{thm}[\cite{pak2013kronecker} Main Lemma] \label{character} Let $\mu = \mu'$ be a self-conjugate partition of $n$, and let $\nu = (2\mu_1-1,2\mu_2-3,2\mu_3-5,\dots) \vdash n$ be the partition whose parts are lengths of the principal hooks of $\mu$. Suppose $\chi^{\lambda}[\nu] \neq 0$ for some $\lambda \vdash n$. Then $\chi^{\lambda}$ is a constituent of $\chi^{\mu}\bigotimes \chi^{\mu}.$
\end{thm}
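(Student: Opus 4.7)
I would split the argument into two parts: a character value computation followed by a representation-theoretic deduction that converts that value into a Kronecker positivity statement.

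\emph{Step 1: computing $\chi^\mu[\nu]$.} First I would show $|\chi^\mu[\nu]| = 1$ by iterated application of the Murnaghan-Nakayama rule. The largest part $2\mu_1 - 1$ of $\nu$ equals the length of the outermost principal hook of $\mu$; because $\mu = \mu'$, this hook is the \emph{unique} border strip of that length sitting inside $\mu$. Removing it leaves a self-conjugate partition $\tilde\mu$ of size $n - (2\mu_1 - 1)$ whose principal hook lengths are exactly $(2\mu_2 - 3,\, 2\mu_3 - 5,\, \ldots)$. Induction on the Durfee size of $\mu$ then produces a unique border-strip tableau of shape $\mu$ and type $\nu$, so $\chi^\mu[\nu]$ is a signed count of a single tableau, hence $\pm 1$.

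\emph{Step 2: Kronecker positivity.} Squaring yields $(\chi^\mu[\nu])^2 = 1$, so evaluating the decomposition $\chi^\mu\chi^\mu = \sum_\lambda g(\mu,\mu,\lambda)\,\chi^\lambda$ at any $\omega$ of cycle type $\nu$ produces the global identity
\[
\sum_{\lambda \vdash n} g(\mu,\mu,\lambda)\,\chi^\lambda[\nu] \;=\; 1.
\]
To upgrade this to a per-$\lambda$ statement, I would exploit that the parts of $\nu$ are pairwise distinct (a direct consequence of $\mu = \mu'$), so the centralizer $Z := C_{S_n}(\omega)$ is isomorphic to the abelian group $\prod_i C_{\nu_i}$. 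The restriction $\chi^\mu|_Z$ therefore decomposes as a sum of linear characters, and the sharp constraint $|\chi^\mu[\omega]| = 1$ pins down those linear characters very tightly. Combined with Frobenius reciprocity between $S_n$ and $Z$, and tracking the trace of $\omega$ on the $S^\lambda$-isotypic component of $S^\mu \otimes S^\mu$ (which equals $g(\mu,\mu,\lambda)\cdot\chi^\lambda[\nu]$), I would deduce that this trace cannot vanish whenever $\chi^\lambda[\nu] \neq 0$, forcing $g(\mu,\mu,\lambda) > 0$.

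\emph{Main obstacle.} The delicate step is the second one. The bare global identity only guarantees that \emph{some} $\lambda$ with $\chi^\lambda[\nu] \neq 0$ contributes a positive multiplicity, not every such $\lambda$. Upgrading to the uniform conclusion genuinely requires leveraging both the abelian cyclic-product structure of the centralizer $Z$ and the sharpness $|\chi^\mu[\omega]| = 1$. I expect the cleanest route is to analyze the eigenvalue spectrum of $\omega$ acting on $S^\mu$, noting that the $\pm 1$-sum constraint forces the eigenvalues to be parametrized by the principal-hook data in a very restrictive way, and then to observe that the spectrum of $\omega$ on $S^\mu \otimes S^\mu$ inherits enough of this structure to forcibly contain every $S^\lambda$ whose character does not annihilate $\omega$.
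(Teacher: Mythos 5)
The paper does not prove this statement: it is quoted verbatim as the Main Lemma of \cite{pak2013kronecker}, so there is no internal proof to compare against and your proposal must stand on its own. Step 1 of your plan is correct and standard: $2\mu_1-1$ is the hook length of the corner box and strictly exceeds every other hook length of $\mu$, so the only border strip of that size is the outer principal rim hook; its removal leaves the self-conjugate partition with principal hooks $(2\mu_2-3,\dots)$, and induction on the Durfee size yields a unique border-strip tableau, whence $\chi^\mu[\nu]=\pm1$.

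Step 2, however, is where the entire content of the lemma lies, and it is not established. The averaged identity $\sum_{\lambda}g(\mu,\mu,\lambda)\,\chi^\lambda[\nu]=1$ is correct but, as you concede, only shows that \emph{some} $\lambda$ with $\chi^\lambda[\nu]\neq0$ occurs. Your proposed upgrade is circular at the decisive moment: you say you would ``deduce that the trace $g(\mu,\mu,\lambda)\cdot\chi^\lambda[\nu]$ cannot vanish whenever $\chi^\lambda[\nu]\neq0$,'' but that nonvanishing \emph{is} the conclusion $g(\mu,\mu,\lambda)>0$; no mechanism is supplied. Moreover, the two ingredients you isolate --- the abelian centralizer of $\omega$ and the sharpness $|\chi^\mu[\omega]|=1$ --- cannot suffice at the level of generality at which you invoke them: in the cyclic group of order $3$ with $z$ a generator (so the centralizer is the whole abelian group), a nontrivial linear character $\chi$ satisfies $|\chi(z)|=1$ and $\chi(z)\neq0$, yet $\chi$ is not a constituent of $\chi\otimes\chi$. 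Any correct completion must therefore exploit structure beyond these two facts (specific properties of $S_n$, of the class of cycle type $\nu$ with distinct odd parts, or of the restriction of $\chi^\mu$ to the centralizer), and none is identified. The closing eigenvalue-spectrum heuristic also conflates the spectrum of the single element $\omega$ on $S^\mu\otimes S^\mu$ with the decomposition of that module into irreducibles; eigenvalue data of one element cannot detect which $S^\lambda$ occur. As it stands, the proposal establishes only the known computation $\chi^\mu[\nu]=\pm1$ and the global identity, not the theorem.
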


Let $\mu_i(k,m) := (m^2-k-i,i, 1^k)$ and $\alpha_m = (2m-1,2m-3,\dots,1).$ By Theorem \ref{character}, that $|\chi^{\mu(k,m)}(\alpha_m)| \neq 0$ would imply $g(\msquare_m,\msquare_m, \mu_i(k,m)) >0$. In particular, we will discuss the number of rim-hook tableaux of shape $\mu_2(k,m) = (m^2-k-2,2, 1^k)$ and weight $\alpha_m$.

To use the Murnaghan-Nakayama rule to compute the characters, we consider the construction of an arbitrary rim-hook tableau of shape $\mu_2(k,m)$ and weight $(1,3,\dots,2m-1).$ Observe that the $1-$hook can only be placed at the upper left corner, and there are three ways to place the $3-$hook, as illustrated in the following diagrams: 
\vspace{.2in}

\begin{center}
\ytableausetup{smalltableaux}
\ytableaushort
  {13\none,33, \none}
 * {10,2,1,1,1,1,1,1}
 \hspace{1cm}
\ytableaushort
  {1333\none,\none,\none}
 * {10,2,1,1,1,1,1,1}
  \hspace{1cm}
\ytableaushort
  {1\none,3\none,3\none,3\none}
 * {10,2,1,1,1,1,1,1}
\end{center}
\vspace{.2in}

Let $P_{R(m)}(k)$ denote the number of partitions of $k$ whose parts are  distinct odd integers from the set $${R(m)} = \{5,7,\dots,2m-1\}.$$  Observing the diagrams above, we can deduce that the height of any rim-hook tableau with the shape $\mu_2(k,m)$ and weight $(1,3,\dots,2m-1)$ is always an odd number. From left to right,  the quantities of rim-hook tableaux corresponding to the three diagrams are $P_{R(m)}(k)$, $P_{R(m)}(k+2)$, and $P_{R(m)}(k-2)$, respectively. Thus, by Murnaghan-Nakayama rule, we therefore have \[\chi^{(n-k-2,2,1^k)}(2m-1,2m-3,\dots,3,1) = -P_{R(m)}(k)-P_{R(m)}(k+2)-P_{R(m)}(k-2).\] Thus, $g(\lambda_m,\lambda_m, (n-k-2,2,1^k)) > 0$ if $P_{R(m)}(k)+P_{R(m)}(k+2)+P_{R(m)}(k-2) > 0$, which is equivalent to that \[\max \{P_{R(m)}(k),P_{R(m)}(k+2),P_{R(m)}(k-2)\}> 0.\] 

\begin{lemma}\label{lem:casei=2} Let $m \ge 8$ be fixed, $0 \le k \le m^2-4$ and let \[NK_2(m) = \{1,2,4,6,8,m^2-12,m^2-10, m^2-8,m^2-6, m^2-5\}.\] Then $P_{R(m)}(k)+P_{R(m)}(k+2)+P_{R(m)}(k-2) > 0$ if and only if  $k \notin NK_2(m)$. \end{lemma}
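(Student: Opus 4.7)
The plan is to verify the equivalence by a symmetry reduction followed by a finite case analysis, showing that $T(k) := P_{R(m)}(k-2)+P_{R(m)}(k)+P_{R(m)}(k+2)$ vanishes precisely on $NK_2(m)$.

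Since the elements of $R(m)=\{5,7,\ldots,2m-1\}$ sum to $m^2-4$, the involution $S \mapsto R(m) \setminus S$ on subsets yields $P_{R(m)}(k) = P_{R(m)}(m^2-4-k)$, so $T$ is symmetric under $k \mapsto m^2-4-k$. One checks $NK_2(m)$ is itself invariant under the same involution (with pairings $1 \leftrightarrow m^2-5$, $2 \leftrightarrow m^2-6$, $4 \leftrightarrow m^2-8$, $6 \leftrightarrow m^2-10$, $8 \leftrightarrow m^2-12$), so it suffices to verify the claim in the reduced range $0 \le k \le \lfloor(m^2-4)/2\rfloor$. Since every part of $R(m)$ is at least $5$, direct enumeration yields $P_{R(m)}(k)=0$ for $k \in \{1,2,3,4,6,8,10\}$ and $P_{R(m)}(k) \ge 1$ for $k \in \{5,7,9,11,12,13,14,15,16\}$ (via singletons or one of the pairs $\{5,7\}$, $\{5,9\}$, $\{7,9\}$, all available in $R(m)$ since $m \ge 8$). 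This small-value table immediately verifies $T(k)=0$ for each low-end element $k \in \{1,2,4,6,8\}$ of $NK_2(m)$, and the high-end follows by the symmetry above.

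For the reverse direction, I must show $T(k)>0$ whenever $k \notin NK_2(m)$ in the reduced range. Small $k$ outside $NK_2(m)$ are settled directly by the enumeration above, since each such $k$ has at least one of $k-2, k, k+2$ in the positive set $\{5,7,9,11,12,\ldots,16\}$. For the intermediate range $k \in [11, m^2-15]$ the key claim is that every such $k$, apart from a short explicit list of exceptions, admits a partition into distinct parts of $R(m)$; I prove this by induction on $k$ using the recursion $k = 5 + (k-5)$ or $k = 7 + (k-7)$, switching between the two to avoid reusing a part.

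The main obstacle will be handling the short list of exceptional nonrepresentable values, which is most acute when $m=8$: here $R(8)=\{5,7,9,11,13,15\}$ is smallest and $P_{R(8)}(k)=0$ additionally for $k \in \{17, 19, 30\}$ and their complements $\{43, 41, 30\}$. In each such case I verify $T(k)>0$ directly by exhibiting a partition of $k-2$ or $k+2$ --- for instance $P(15)=1$ rescues $k=17$, $P(21)=1$ rescues $k=19$, and $P(28)=P(32)=1$ rescues $k=30$. For $m \ge 9$ the set $R(m)$ grows, shrinking the exceptional list progressively and simplifying the induction.
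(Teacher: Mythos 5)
Your forward direction (direct check at $k\in\{1,2,4,6,8\}$ plus the complementation symmetry $P_{R(m)}(k)=P_{R(m)}(m^2-4-k)$) is exactly the paper's. For the reverse direction you take a genuinely different route, and as written it has a gap. The paper disposes of all $m>8$ in one stroke by induction on $m$: since $R(m)\subset R(m+1)$ we have $P_{R(m+1)}(k)\ge P_{R(m)}(k)$ for every $k$, and the reduced range for $m+1$, namely $k\le\lceil((m+1)^2-4)/2\rceil$, lies entirely below $m^2-12$; so every $k$ there outside $\{1,2,4,6,8\}$ already has $T(k)>0$ at level $m$ by the inductive hypothesis, and monotonicity transfers this to $m+1$. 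Only the single base case $m=8$ requires a finite check. Your plan instead tries to prove, for each $m$ separately, that every $k$ in $[11,\lfloor(m^2-4)/2\rfloor]$ is representable up to a short exceptional list, and that is where the argument does not close.

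Two concrete problems. First, the induction on $k$ via ``$k=5+(k-5)$ or $k=7+(k-7)$, switching to avoid reusing a part'' is not valid as stated: knowing $k-5$ (or $k-7$) is representable does not give you a representation avoiding the part $5$ (resp.\ $7$). Already for $m=8$ the value $k=21$ has the unique representation $\{5,7,9\}$, so neither $26$ nor $28$ is reachable by your recursion; both need fresh representations ($\{11,15\}$, $\{13,15\}$). Repairing this requires a strictly stronger inductive hypothesis, and establishing it uniformly in $m$ and $k$ is essentially the whole difficulty. Second, you compute the exceptional set $\{17,19,30\}$ only for $m=8$ and assert that it ``shrinks progressively'' for $m\ge 9$; since the range of $k$ grows quadratically in $m$, that assertion carries real weight and is unproved. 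Either import the paper's monotonicity-plus-range-containment induction on $m$, or replace the representability step by the exact criterion for consecutive odd parts (a sum of $j$ distinct elements of $R(m)$ realizes precisely the integers of parity $j\bmod 2$ in $[j^2+4j,\,2mj-j^2]$), which determines the exceptional sets for all $m$ at once. Separately, note a convention issue you inherit from the paper rather than introduce: $T(2)=P_{R(m)}(0)+P_{R(m)}(2)+P_{R(m)}(4)$, so the claim $T(2)=0$ forces $P_{R(m)}(0)=0$, while $T(0)>0$ (required since $0\notin NK_2(m)$) forces $P_{R(m)}(0)>0$; the cases $k\in\{0,2\}$ need an explicit word in any complete write-up.
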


\begin{proof} By directly checking the values for $k \in \{1,2,4,6,8\}$, we find that \[P_{R(m)}(k)+P_{R(m)}(k+2)+P_{R(m)}(k-2) = 0\]  holds true. Note the sum of elements in $R(m)$ is $m^2-4$ and therefore $P_{R(m)}(k) = P_{R(m)}(m^2-4-k)$. It follows that if $k \in \{m^2-12,m^2-10, m^2-8,m^2-6, m^2-5\}$, then $P_{R(m)}(k)+P_{R(m)}(k+2)+P_{R(m)}(k-2) = 0.$ 

We shall prove the other direction by induction on $m$. We can check the statement is true for $m = 8.$ Now, assuming that the statement is true for $m \ge 8$, we will show that it holds true for $m+1.$  Due to the symmetry of $P_{R(m+1)}(k)$, it suffices to demonstrate that $P_{R(m+1)}(k)+P_{R(m+1)}(k+2)+P_{R(m+1)}(k-2) > 0$ for any $k \in [\lceil\frac{(m+1)^2-4}{2}\rceil] \setminus \{1,2,4,6,8\}$.

Since $R(m)\subset R(m+1)$ by construction, we can assert that \[P_{R(m+1)}(k)+P_{R(m+1)}(k+2)+P_{R(m+1)}(k-2) \ge P_{R(m)}(k)+P_{R(m)}(k+2)+P_{R(m)}(k-2) > 0\] for any $k \in [m^2-4] \setminus NK_2(m)$ by inductive hypothesis. It is easy to see that $\lceil\frac{(m+1)^2-4}{2}\rceil < m^2-12$ when $m \ge 6$.

Then by the inductive hypothesis, it follows that $P_{R(m+1)}(k)+P_{R(m+1)}(k+2)+P_{R(m+1)}(k-2) > 0$ for any $k \notin NK_2(m+1)$, which completes the induction.
\end{proof}



\begin{thm}\label{mu2hook} Let $m \ge 8$ be fixed and $0 \le k \le m^2-4$. Then, $g(\msquare_m,\msquare_m, \mu_2(k,m)) > 0$ if and only if $k \notin \{1, m^2-5\}$.\end{thm}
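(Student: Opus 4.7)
The plan is to prove the two directions separately. The $(\Rightarrow)$ direction will be immediate from Corollary \ref{zerocor}: that result gives $g(\msquare_m,\msquare_m, (m^2-3,2,1)) = 0$, which is exactly the $k=1$ case of $\mu_2(k,m)$. Observing that $\mu_2(k,m)' = \mu_2(m^2-k-4,m)$ (by reading off column lengths: the first column has $k+2$ boxes, the second has $2$, and the remaining $m^2-k-4$ columns each have $1$) and that $\msquare_m$ is self-conjugate, the transposition symmetry of Kronecker coefficients yields $g(\msquare_m,\msquare_m,\mu_2(m^2-5,m)) = g(\msquare_m,\msquare_m,\mu_2(1,m)) = 0$.

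For the $(\Leftarrow)$ direction, I would use the Murnaghan--Nakayama character computation developed just above the theorem statement: enumerating over the three allowed placements of the $3$-hook (with the $1$-hook pinned at the corner) and summing signs yields
\[
\chi^{\mu_2(k,m)}[\alpha_m] \;=\; -\bigl(P_{R(m)}(k) + P_{R(m)}(k+2) + P_{R(m)}(k-2)\bigr),
\]
where $\alpha_m = (2m-1, 2m-3, \dots, 1)$. Because $\alpha_m$ is precisely the tuple of principal hook lengths of the self-conjugate partition $\msquare_m$, Theorem \ref{character} then promotes any nonvanishing of this character value to positivity of $g(\msquare_m,\msquare_m,\mu_2(k,m))$.

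Next, I would invoke Lemma \ref{lem:casei=2} to deduce positivity for every $k \in [0, m^2-4]$ outside the exceptional set $NK_2(m) = \{1,2,4,6,8,m^2-12,m^2-10,m^2-8,m^2-6,m^2-5\}$. It then remains to treat the eight residual values $k \in \{2,4,6,8,m^2-12,m^2-10,m^2-8,m^2-6\}$. Using the transposition identity above, these values pair into $\{2,m^2-6\}$, $\{4,m^2-8\}$, $\{6,m^2-10\}$, and $\{8,m^2-12\}$, reducing the task to the four cases $k \in \{2,4,6,8\}$. For $m \ge 8$ the bound $m^2-5 > 8$ ensures none of these $k$ coincide with the genuine exceptions $\{1, m^2-5\}$, so Corollary \ref{cor:square-near-hooks} (applied with $i=2$) supplies the remaining positivity.

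The hard part will be largely bookkeeping: confirming that the character-theoretic route and the rectangular near-hook corollary together exhaust $[0, m^2-4]$ after the self-conjugate symmetry is taken into account, and checking that the exceptional locus $NK_2(m)$ is symmetric under $k \mapsto m^2-k-4$ with the genuinely vanishing orbit being precisely $\{1, m^2-5\}$. No further computations should be needed beyond this indexing exercise, since the substantive content is already packaged in Lemma \ref{lem:casei=2} (the inductive lower bound on $P_{R(m)}(k) + P_{R(m)}(k\pm 2)$) and Corollary \ref{cor:square-near-hooks} (the small-$k$ positivity inherited from rectangular shapes).
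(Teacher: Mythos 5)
Your argument is correct, and its skeleton — the forward direction via Corollary \ref{cor:zerocase} plus transposition, and the backward direction via the Murnaghan--Nakayama computation, Theorem \ref{character}, and Lemma \ref{lem:casei=2} — is exactly the paper's. The one place you diverge is the residual set $k\in\{2,4,6,8\}$ (and its transposes $\{m^2-12,m^2-10,m^2-8,m^2-6\}$): you dispose of these by citing Corollary \ref{cor:square-near-hooks} with $i=2$, whereas the paper handles them by explicit semigroup decompositions such as $(m^2-4,2,1,1)=(21,2,1,1)+_H(m^2-25)$, resting on computer-verified positivity for $\msquare_5$ together with the semigroup property. Your route is logically sound (Corollary \ref{cor:square-near-hooks} is derived from the external Ikenmeyer--Panova result, Theorem \ref{rect-near-hooks}, so there is no circularity), but it has an awkward consequence you should acknowledge: for $i=2$ that corollary already asserts positivity for \emph{every} $k$ in range outside $\{1,m^2-5\}$, so once you invoke it the entire character-theoretic portion of your proof is redundant — Theorem \ref{mu2hook} would follow from Corollary \ref{cor:square-near-hooks} in one line. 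The paper's point in this section is precisely to give an \emph{alternative} proof that does not lean on the rectangular near-hook machinery, which is why it patches the exceptional $k$ with self-contained semigroup decompositions instead. If you want your write-up to carry the same independent content, replace the appeal to Corollary \ref{cor:square-near-hooks} with such decompositions (or with any other argument not routed through Theorem \ref{rect-near-hooks}); as a pure verification of the statement, however, what you wrote is fine.
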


\begin{proof} ($\Rightarrow$) If $k = 1$, by Corollary \ref{cor:zerocase}, $g(\msquare_m,\msquare_m, (m^2-3,2,1)) = 0$. If $k = m^2-5$, by the transposition property, $g(\msquare_m,\msquare_m, (m^2-k-2,2,1^k)) = 0$.

($\Leftarrow$) If $k \notin NK_2(m)$, the result follows from Theorem \ref{character}, Murnaghan-Nakayama rule, and Lemma \ref{lem:casei=2}.
If $k = 2,4,6,8$, we consider the decomposition $(m^2-4,2,1^2) = (21,2,1,1)+_H(m^2-25)$, $(m^2-6,2,1^4) = (19,2,1^4)+_H(m^2-25)$, $(m^2-8,2,1^6) = (17,2,1^6)+_H(m^2-25)$,  $(m^2-10,2,1^8) = (15,2,1^8)+_H(m^2-25)$, respectively. Then by the semigroup property, it follows that  $g(\msquare_m,\msquare_m, (m^2-k,2,1^k)) > 0$ for $k \in \{2,4,6,8\}$. By the transposition property of Kronecker coefficients, $g(\msquare_m,\msquare_m, (m^2-k,2,1^k)) > 0$ for $k \in \{m^2-6,m^2-8,m^2-10,m^2-12\}$.
\end{proof}

Similarly, there are only two ways to place the 1-hook and 3-hook into a rim-hook tableau of shape $\mu_3(k,m)$ and weight $\alpha_m$, as illustrated below. \vspace{.1in}

\begin{center}
\ytableausetup{smalltableaux}
\ytableaushort
  {13\none,33, \none}
 * {10,3,1,1,1,1,1,1}
 \hspace{1cm}
\ytableaushort
  {1333\none,\none,\none}
 * {10,3,1,1,1,1,1,1}
  \hspace{1cm}
\end{center}
\vspace{.2in}
Therefore, we have \[\chi^{(n-k-3,3,1^k)}(2m-1,2m-3,\dots,3,1) = P_{R(m)}(k)+P_{R(m)}(k+3).\] It follows that $g(\msquare_m,\msquare_m, (n-k-3,3,1^k)) > 0$ if $P_{R(m)}(k)+P_{R(m)}(k+3) > 0$.

\begin{lemma}\label{lem:casei=3} Let $m \ge 5$ be fixed, $0\le k \le m^2-7$ and let \[NK_3(m) := \{1,3,m^2-10, m^2-8\}.\] Then $P_{R(m)}(k)+P_{R(m)}(k+3) > 0$ if and only if   $k \notin NK_3(m)$. \end{lemma}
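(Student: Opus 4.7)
The plan is to mirror the structure of the proof of Lemma~\ref{lem:casei=2}, separating the easy forward direction from an induction on $m$ for the reverse direction.

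For the forward direction I would check the four exceptional values explicitly. Since every element of $R(m)=\{5,7,\ldots,2m-1\}$ is at least $5$, none of $1,3,4,6$ can be written as a sum of distinct elements of $R(m)$, so
\[
P_{R(m)}(1)+P_{R(m)}(4)=0\qquad\text{and}\qquad P_{R(m)}(3)+P_{R(m)}(6)=0.
\]
Since the total sum $\sum_{r\in R(m)}r$ equals $m^2-4$, complementation inside $R(m)$ yields the palindrome relation $P_{R(m)}(k)=P_{R(m)}(m^2-4-k)$. Consequently the function $f(k):=P_{R(m)}(k)+P_{R(m)}(k+3)$ satisfies the symmetry $f(k)=f(m^2-7-k)$. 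Applying this identity to $k=1$ and $k=3$ gives $f(m^2-8)=f(1)=0$ and $f(m^2-10)=f(3)=0$, which completes the forward direction.

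For the reverse direction I would argue by induction on $m$, exactly as in Lemma~\ref{lem:casei=2}. The base case at the lower end of the hypothesis is verified by a finite direct computation. For the inductive step, the inclusion $R(m)\subset R(m+1)$ gives $P_{R(m+1)}(k)\ge P_{R(m)}(k)$ for every $k$, so the inductive hypothesis immediately delivers $f(k)>0$ for all $k\in[0,m^2-7]\setminus NK_3(m)$ with $R(m+1)$ in place of $R(m)$. The genuinely new cases are then (i) the two indices $k\in\{m^2-10,m^2-8\}$, which now lie outside $NK_3(m+1)$, and (ii) the new range $k\in[m^2-6,(m+1)^2-7]$. Using the palindrome symmetry of $f$ relative to $R(m+1)$, namely $f(k)=f((m+1)^2-7-k)$, case (ii) reduces to a short list of indices near the midpoint $\lfloor((m+1)^2-7)/2\rfloor$.

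To close the remaining open values I would use the Pascal-type decomposition
\[
P_{R(m+1)}(j)=P_{R(m)}(j)+P_{R(m)}(j-(2m+1)),
\]
obtained by splitting on whether the new element $2m+1$ is used, combined with the palindrome $P_{R(m)}(j)=P_{R(m)}(m^2-4-j)$. This reduces each open case to checking that $P_{R(m)}$ is positive at some small integer that is manifestly representable, such as a single element of $R(m)$ or a short sum like $5+7$, $5+9$, or $5+7+9$. The main obstacle is purely bookkeeping: ensuring that the combined effect of the palindrome, the inductive hypothesis, and the Pascal split leaves no gap in the new range $[0,(m+1)^2-7]\setminus NK_3(m+1)$, particularly near the boundary $k\approx m^2$ where the newly introduced element $2m+1$ interacts with the previously forbidden indices. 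Once the finite boundary region has been tabulated, the rest of the argument is mechanical.
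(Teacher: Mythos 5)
Your proposal is correct and follows essentially the same route as the paper: the forward direction via direct checks at $k\in\{1,3\}$ plus the palindrome $P_{R(m)}(k)=P_{R(m)}(m^2-4-k)$, and the reverse direction by induction on $m$ using $R(m)\subset R(m+1)$ together with the symmetry of $f$. The Pascal-type split $P_{R(m+1)}(j)=P_{R(m)}(j)+P_{R(m)}(j-(2m+1))$ you invoke at the end is unnecessary: since $\lfloor((m+1)^2-7)/2\rfloor<m^2-10$, every index in your cases (i) and (ii) reflects under $k\mapsto(m+1)^2-7-k$ to an index in $[0,2m+4]$ (near $0$, not near the midpoint), which is already covered by the inductive hypothesis --- this is exactly the observation the paper uses to close the induction.
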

\begin{proof} By directly checking the values for $k \in \{1,3\}$, we find that $P_{R(m)}(k)+P_{R(m)}(k+3) = 0$ holds true. Note the sum of elements in $R(m)$ is $m^2-4$ and therefore $P_{R(m)}(k) = P_{R(m)}(m^2-4-k)$. It follows that if $k \in \{m^2-8, m^2-10\}$, then $P_{R(m)}(k)+P_{R(m)}(k+3) = 0$.

We shall prove the other direction by induction on $m$. It is easy to check that the statement is true for $m = 7.$ Now, assuming that the statement is true for $m \ge 7$, we will show that it is also true for $m+1.$ Due to the symmetry of $P_{R(m+1)}(k)$, it suffices to demonstrate that $P_{R(m+1)}(k)+P_{R(m+1)}(k+3) > 0$ for any $k \in [\lfloor\frac{(m+1)^2-7}{2}\rfloor] \setminus \{1,3\}$.

Since $R(m)\subset R(m+1)$ by construction, we can assert that \[P_{R(m+1)}(k)+P_{R(m+1)}(k+3) \ge P_{R(m)}(k)+P_{R(m)}(k+3) > 0\] for any $k \in [m^2-4] \setminus NK_3(m)$ by inductive hypothesis. We can verify that $\lfloor\frac{(m+1)^2-7}{2}\rfloor < m^2-10$ when $m \ge 5$.
Then by the inductive hypothesis, we conclude that $P_{R(m)}(k)+P_{R(m)}(k+3)> 0$ for any $k \notin NK_3(m+1)$, which completes the induction.
\end{proof}

\begin{thm}\label{mu3hook} 
Let $m\ge 7$ be fixed and $0 \le k \le m^2-6$. Then, $g(\msquare_m,\msquare_m, \mu_3(k,m))> 0$ if and only if $k \neq 1$.\end{thm}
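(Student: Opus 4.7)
The plan is to mirror the proof of Theorem~\ref{mu2hook}: use the principal-hook character identity just derived for the bulk of the cases, and mop up the residual exceptional values by the semigroup property.

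The forward direction is immediate: $\mu_3(1,m) = (m^2-4, 3, 1)$ is a near two-row partition of the form $(m^2-k', k'-1, 1)$ with $k' = 4$, so Corollary~\ref{cor:zerocase} gives vanishing.

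For the reverse direction, combining the identity $\chi^{\mu_3(k,m)}[\alpha_m] = P_{R(m)}(k) + P_{R(m)}(k+3)$ with Lemma~\ref{lem:casei=3} and Theorem~\ref{character} yields $g(\msquare_m, \msquare_m, \mu_3(k,m)) > 0$ for every $k \notin NK_3(m) = \{1, 3, m^2-10, m^2-8\}$. Three boundary values then remain. For $k = 3$, I would use the semigroup property via the decomposition
\[
(m^2-6, 3, 1^3) = (m_0^2-6, 3, 1^3) +_H (m^2 - m_0^2)
\]
for a fixed small base $m_0$ (say $m_0 = 7$), after directly checking $g(\msquare_{m_0}, \msquare_{m_0}, (m_0^2-6, 3, 1^3)) > 0$ by computer. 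The complementary shape $B = ((m-m_0)^{m_0}, m^{m-m_0})$ satisfies $\msquare_m = \msquare_{m_0} +_H B$ and $g(B, B, (m^2 - m_0^2)) = 1$, so the semigroup property closes the argument. For $k \in \{m^2-10, m^2-8\}$, I would invoke self-conjugacy of $\msquare_m$ to rewrite the target coefficients as $g(\msquare_m, \msquare_m, (m^2-8, 2, 2, 1^4))$ and $g(\msquare_m, \msquare_m, (m^2-6, 2, 2, 1, 1))$, and then run the analogous semigroup reduction on these $(2,2)$-tailed shapes from a small base $m_0$; alternatively, one could cite Corollary~\ref{cor:square-near-hooks} applied with $\rho = (1, 1)$ directly.

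The main obstacle is that, unlike in Theorem~\ref{mu2hook} where $\mu_2(k,m)' = \mu_2(m^2-k-4, m)$ sent the family to itself and supplied the large-$k$ boundary cases for free, here $\mu_3(k,m)' = (k+2, 2, 2, 1^{m^2-k-6})$ exits the $\mu_3$ family. The two large-$k$ exceptional values therefore require genuinely new work on $(2,2)$-tailed near-hook shapes, and one must verify that the specific partitions produced are not in the exception list of Theorem~\ref{rect-near-hooks} before invoking it.
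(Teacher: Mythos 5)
Your overall strategy matches the paper's: the forward direction via Corollary~\ref{cor:zerocase}, the bulk of the reverse direction via the character identity $\chi^{\mu_3(k,m)}[\alpha_m]=P_{R(m)}(k)+P_{R(m)}(k+3)$ together with Lemma~\ref{lem:casei=3} and Theorem~\ref{character}, and semigroup arguments for the residual values $k\in\{3,m^2-10,m^2-8\}$. However, there is a genuine gap: you have not treated $k=m^2-6$, which lies in the theorem's stated range and gives $\mu_3(m^2-6,m)=(3,3,1^{m^2-6})$. Lemma~\ref{lem:casei=3} is only stated for $0\le k\le m^2-7$, and the character method provably fails at $k=m^2-6$: since the parts of $R(m)$ sum to $m^2-4$ and none equals $2$, one has $P_{R(m)}(m^2-6)+P_{R(m)}(m^2-3)=P_{R(m)}(2)+0=0$. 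So this value is neither covered by your ``every $k\notin NK_3(m)$'' claim (the lemma does not reach it) nor by your list of exceptional cases. The paper closes it by transposing to the three-row partition $(m^2-4,2,2)$ and invoking Theorem~\ref{mainthm}; some such separate argument is required.

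Two smaller points. First, in your $k=3$ argument the asserted shape $B=((m-m_0)^{m_0},m^{m-m_0})$ is not a partition, and $\msquare_m$ admits no decomposition $\msquare_{m_0}+_H B$ as a single horizontal sum; you need the mixed decomposition $\msquare_m=(\msquare_{m_0}+_V(m_0^{\,m-m_0}))+_H((m-m_0)^m)$ and correspondingly a three-piece splitting such as $(m^2-6,3,1^3)=(m_0^2-6,3,1^3)+_H(m_0(m-m_0))+_H(m(m-m_0))$, keeping the number of vertical additions even. (The paper's own $k=3$ step, based at $\msquare_3$, has the same implicit structure.) Second, for the transposed shapes $(m^2-8,2,2,1^4)$ and $(m^2-6,2,2,1^2)$ the correct reference is Theorem~\ref{rect-near-hooks} with $\rho=(1,1)$, not Corollary~\ref{cor:square-near-hooks}, which only covers single-row $\rho$; your exception check does go through there ($j=6$ and $j=4$ lie in $[1,m^2-4]$ and neither shape equals $(3,2,2,1^{m^2-7})$), so that route is valid once the citation is fixed.
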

\begin{proof} ($\Rightarrow$) If $k = 1$, by Corollary \ref{cor:zerocase}, $g(\msquare_m,\msquare_m, (m^2-k-3,3,1^k)) = 0$.

($\Leftarrow$) Now assume that $k \in \{0,2,3,\dots,m^2-6\}$.
If $k = 3$, we can decompose the partition $(m^2-6,3,1^3)$ as $(3,3,1,1,1)+_H (m^2-9)$. Since $g((3,3,3),(3,3,3),(3,3,1,1,1))>0$, by semigroup property, it follows that $g(\msquare_m,\msquare_m, (m^2-6,3,1^3)) > 0.$ If $k = m^2-10$, we can decompose the partition $(7,3,1^k)$ as $(7,3,1^6)+_V (1^{(m^2-16)})$. Since $g((4,4,4,4),(4,4,4,4),(7,3,1^6))>0$, by semigroup property,it follows that $g(\msquare_m,\msquare_m, (7,3,1^k)) > 0.$ If $k = m^2-8$, we can decompose the partition $(5,3,1^k)$ as $(5,3,1^8)+_V (1^{(m^2-16)})$. Since $g((4,4,4,4),(4,4,4,4),(5,3,1^8))>0$, by semigroup property, it follows that $g(\msquare_m,\msquare_m, (5,3,1^k)) > 0.$

If $k = m^2-6$, $g(\msquare_m,\msquare_m, (3,3,1^k)) = g(\msquare_m,\msquare_m, (m^2-4,2,2))> 0$ by Theorem \ref{mainthm}.
If $k < m^2-6$ and $k \notin NK_3(m)$, the result follows from Theorem \ref{character}, Murnaghan-Nakayama rule, and Lemma \ref{lem:casei=3}.
\end{proof}



Next, we will discuss the constituency of near-hooks with a second row of length of at least 8.
\begin{prop}\label{general statement} For every $i \ge 8$, we have $g(\msquare_m,\msquare_m, \mu_i(k,m)) > 0$ for all $m \ge 20$ and $k \ge 0$.\end{prop}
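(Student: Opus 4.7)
My plan is to apply the semigroup property (Theorem~\ref{semigroup} and its $+_V$ corollary) in order to reduce positivity of $g(\msquare_m, \msquare_m, \mu_i(k,m))$ to previously established statements: Theorem~\ref{rect-near-hooks} (near-hooks with second row at most $6$ in rectangles), Theorem~\ref{thm:nn} (two-row partitions in $(n,n)$), Theorem~\ref{t:strict} (two-row partitions in larger rectangular tensor squares), and Theorem~\ref{thm:hookpos} (hooks in rectangles). The idea is to split $\msquare_m$ into compatible rectangles and to split $\mu_i(k,m)$ into a single near-hook (or hook) piece plus several two-row pieces.

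Concretely, for positive integers $h_1,\ldots, h_r$ with $\sum h_j = m$ and each $h_j \ge 7$, iterated use of the $+_V$ corollary yields the decomposition $\msquare_m = m^{h_1} +_V m^{h_2} +_V \cdots +_V m^{h_r}$ in the first two slots, so Theorem~\ref{semigroup} reduces the problem to finding a horizontal decomposition $\mu_i(k,m) = \alpha_1 +_H \cdots +_H \alpha_r$ with $g(m^{h_j}, m^{h_j}, \alpha_j) > 0$ for every $j$. A central structural observation is that rows $3$ through $k+2$ of a horizontal sum of near-hooks equal $1$ only if \emph{exactly one} piece carries the column $1^k$; this forces $\alpha_1 = (h_1 m - k - s_1,\, s_1,\, 1^k)$ and $\alpha_j = (h_j m - s_j,\, s_j)$ for $j \ge 2$, subject to $s_1 + s_2 + \cdots + s_r = i$. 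Theorem~\ref{rect-near-hooks} then handles the near-hook $\alpha_1$, after conjugating $m^{h_1}$ to an $h_1 \times m$ rectangle, provided $s_1 \in [1,7]$ and $k \in [0, h_1^2 - 2s_1]$; Theorem~\ref{t:strict} (for $h_j \ge 8$ and $s_j \ge 2$) and Theorem~\ref{thm:nn} (for $h_j = 2$ with suitable parity) handle the remaining two-row pieces. For $i \le 14$ one takes $r = 2$, and for larger $i$ one increases $r$ while keeping each $h_j \ge 7$.

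When $k$ is close to the upper bound $m^2 - 2i$ and the constraint $k \le h_1^2 - 2s_1$ becomes tight, my intended remedy is to replace $\alpha_1$ by a pure hook (set $s_1 = 0$) and invoke Theorem~\ref{thm:hookpos}, which allows the much wider range $k \in [0, h_1^2 - 1]$ with only eight exceptions, redistributing all of $i$ into the remaining two-row pieces. The main obstacle will be managing all triples $(i, k, m)$ uniformly while avoiding the exception lists of Theorems~\ref{rect-near-hooks} and~\ref{thm:hookpos} and the parity conditions of Theorem~\ref{thm:nn}. I expect to resolve this through case analysis on $k$ (small, medium, and large) with different splits $(r, s_1, \ldots, s_r, h_1, \ldots, h_r)$ in each regime, and for the finitely many boundary partitions that escape every split, by appealing to the already-settled $i \le 7$ case (Corollary~\ref{cor:square-near-hooks}), to the three-row positivity of Theorem~\ref{mainthm}, or to direct character calculation in the spirit of Theorems~\ref{mu2hook} and~\ref{mu3hook}; the hypothesis $m \ge 20$ guarantees enough room in the $h_j$ choices to satisfy all these requirements once $i \ge 8$.
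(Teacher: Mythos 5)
Your small- and medium-$k$ analysis is essentially the paper's argument in a different geometric dress: the paper also isolates the column $1^k$ together with a second row of bounded length in a single piece (it uses an adaptively sized corner square $\msquare_{m_1}$ with $m_1=\lceil\sqrt{k+8}\,\rceil$ and the piece $\mu_4(k,m_1)$ via Corollary~\ref{cor:square-near-hooks}, where you use a full-width slab $m^{h_1}$ and Theorem~\ref{rect-near-hooks} directly) and distributes the rest of the second row over two-row pieces handled by Theorem~\ref{t:strict}. That part of your plan is sound modulo routine bookkeeping (avoiding $s_j=1$, the exception lists, and the parity conditions of Theorem~\ref{thm:nn}).

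The genuine gap is the large-$k$ regime. In your scheme the entire leg $1^k$ must sit inside one rectangular factor $m^{h_1}$, and both Theorem~\ref{rect-near-hooks} and Theorem~\ref{thm:hookpos} cap the admissible leg length at roughly $h_1^2$. Since some other slab must carry second-row mass (because $i\ge 8>7$ cannot all live in $\rho$), you are forced to take $h_1\le m-2$ at best, so your construction only reaches $k\lesssim (m-2)^2$, while $k$ runs up to $m^2-2i$. That leaves a window of $\Theta(m)$ values of $k$ (for each $i$ and each $m$) that is not ``finitely many boundary partitions'' and is not touched by your listed fallbacks: Corollary~\ref{cor:square-near-hooks} needs $i\le 7$, Theorem~\ref{mainthm} needs $k\le 1$, and a direct character computation over a growing window is a new argument, not a patch. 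Switching $\alpha_1$ to a pure hook does not help, because the bound $k\le h_1^2-1$ from Theorem~\ref{thm:hookpos} persists. The missing idea is to conjugate: the paper uses $g(\msquare_m,\msquare_m,\mu)=g(\msquare_m,\msquare_m,\mu')$ and, for $k\ge 7m+9-i$, writes $\mu_i(k,m)'=(k+2,2^{i-1},1^{m^2-2i-k})$ as a horizontal sum of two \emph{hooks} $(k_1,1^{i-1})+_H(k+2-k_1,1^{m^2-i-k-1})$ with $k_1=7m-i+1$, matched against the vertical slabs $7^m$ and $(m-7)^m$ of the square via Theorem~\ref{thm:hookpos}; the point is that after conjugation both legs ($i-1$ and $m^2-i-k-1$) are short enough to satisfy the quadratic leg bounds. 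Without some such transposition step your argument cannot close.
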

\begin{proof}
Let $i \ge 8$ be fixed. Suppose that $m \ge 20$.

If $k\ge 7m+9-i$, we can decompose the transpose of partition $\mu_i(k,m)$, that is $(k+2, 2^{i-1}, 1^{m^2-2i-k})$ as  $(k+2, 2^{i-1}, 1^{m^2-2i-k}) = (k_1, 1^{i-1})+_H (k+2-k_1, 1^{m^2-i-k-1})$ where $k_1 = 7m-i+1$. Since $k\ge 7m+9-i$, we have $k+2-k_1 \ge 10$. Then by Theorem \ref{thm:hookpos}, we have $g(7^m, 7^m, (k_1, 1^{i-1})) > 0$ and $g ((m-7)^m, (m-7)^m, (k+2-k_1, 1^{m^2-i-k-1})) > 0$. We can use the Semigroup property to add the partition triples, which implies that $g(\msquare_m, \msquare_m, (k+2, 2^{i-1}, 1^{m^2-2i-k}))>0$. Then by the transposition property, we have $g(\msquare_m, \msquare_m, \mu_i(k,m))>0$.

If $k\le 7m+8-i$, $2i+k \le 2(7m+8) \le 15m$, we consider the decomposition $\msquare_m = \msquare_{m_1}+_V (m_1^{m-m_1}) +_H ((m-m_1)^m)$, where $m_1 = \left \lceil \sqrt{k+8} \,\right \rceil$. Since $m_1 \le \left \lceil \sqrt{k+8} \,\right \rceil$ and $m \ge 16$, we have $m_1^2- (k+8) \le k+8 \le m^2-k-2i$, which implies that $m^2-m_1^2-i+4\ge i-4$. Moreover, since $k \le 7m$ and $m\ge 20$, we have $m_1 \le m-8$. We will show that there exists a decomposition $\mu_i(k,m) = \mu_4(k,m_1) +_H (a+d_1, a) +_H (b+d_2,b)$ such that $(a+d_1, a) \vdash m_1(m-m_1)$, $ (b+d_2,b) \vdash m(m-m_1)$ and $a+b = i-4$.  We consider the following two cases:
\begin{enumerate}
    \item [Case 1:] If $m$ is odd, then $m(m-m_1)$ and $ m^2-m_1^2-2(i-4)$ always have the same parity. If $ m^2-m_1^2-2(i-4) = m(m-m_1)-2$, let $d_2 = m(m-m_1)-4$ and it follows that $b = 2$; otherwise, let $d_2 = \min (m(m-m_1), m^2-m_1^2-2(i-4))$. It is easy to check that $a \neq 1$ and $b \neq 1$ in this case.
    \item [Case 2:] If $m$ is even, then $m_1(m-m_1)$ and $ m^2-m_1^2-2(i-4)$ always have the same parity. If $ m^2-m_1^2-2(i-4) = m_1(m-m_1)-2$, let $d_1 = m_1(m-m_1)-4$ and it follows that $a  =2$; otherwise, let $d_1= \min (m_1(m-m_1), m^2-m_1^2-2(i-4))$. It is easy to check that $a \neq 1$ and $b \neq 1$ in this case.
\end{enumerate}
By Corollary \ref{cor:square-near-hooks}, we have $g(\msquare_{m_1}, \msquare_{m_1}, \mu_4(k,m_1) >0$. Since $m, m_1, m-m_1 \ge 8$, by Theorem \ref{t:strict}, we can conclude that $g((m_1^{m-m_1}),(m_1^{m-m_1}),(a+d_1,a) )> 0$ and $g(((m-m_1)^m),((m-m_1)^m),(b+d_2,b)) > 0$. Then, adding the partition triples horizontally by semigroup property, we can conclude that $g(\msquare_m, \msquare_m, \mu_i(k,m))>0$ for every $m \ge 20.$
\end{proof}
\begin{cor} \label{cor:conclusion-near-hooks}For every $i \ge 8$, we have $g(\msquare_m,\msquare_m, \mu_i(k,m)) > 0$ for all $m \ge 20$ and $k \ge 0$.
\end{cor}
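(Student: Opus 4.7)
The plan is essentially immediate: Corollary~\ref{cor:conclusion-near-hooks} repeats the statement of Proposition~\ref{general statement} verbatim, so formally the proof is a single-line invocation of that proposition. What I would actually need to outline is the strategy behind Proposition~\ref{general statement}, which proceeds by a two-case split on the size of $k$ and heavy use of the semigroup property (Theorem~\ref{semigroup}).

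Fix $i \ge 8$ and $m \ge 20$. For the \emph{large $k$ regime} $k \ge 7m + 9 - i$, I would pass to the transpose $\mu_i(k,m)' = (k+2, 2^{i-1}, 1^{m^2-2i-k})$ and split it as a horizontal sum of two hooks, $(k_1, 1^{i-1}) \,+_H\, (k+2-k_1, 1^{m^2-i-k-1})$ with $k_1 = 7m - i + 1$. Correspondingly, decompose $\msquare_m = (7\times m) \,+_H\, ((m-7)\times m)$. Since $i \ge 8$ and the residue hook has first part at least $10$, the Ikenmeyer--Panova hook positivity result (Theorem~\ref{thm:hookpos}) gives positivity of each factor. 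The semigroup property combines them, and transposition returns positivity of the original Kronecker coefficient.

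For the \emph{small/medium $k$ regime} $k \le 7m + 8 - i$, the idea is to reduce to the $i=4$ case already handled in Corollary~\ref{cor:square-near-hooks}. Set $m_1 = \lceil \sqrt{k+8}\,\rceil$ and decompose $\msquare_m = \msquare_{m_1} +_V (m_1^{m-m_1}) +_H ((m-m_1)^m)$, embedding a smaller square in the corner and filling the L-shape with rectangles of shapes $m_1 \times (m-m_1)$ and $(m-m_1)\times m$. Write $\mu_i(k,m) = \mu_4(k, m_1) +_H (a+d_1, a) +_H (b+d_2, b)$, where the two two-row blocks absorb the extra $i-4$ boxes in the second row and fill the L-shape. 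Invoke Corollary~\ref{cor:square-near-hooks} for the $\mu_4$ piece, apply Theorem~\ref{t:strict} to each two-row/rectangle pair, and finish with the semigroup property.

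The main obstacle I anticipate is the bookkeeping needed to select $a, b, d_1, d_2$ with $a+b = i-4$, $(a+d_1, a) \vdash m_1(m-m_1)$, and $(b+d_2, b) \vdash m(m-m_1)$, subject to the constraint that both $a$ and $b$ must be at least $2$ (since $g(\ell\times n,\, \ell\times n,\, (\ell n - 1, 1))$ vanishes for $\ell n > 1$, so $a = 1$ or $b = 1$ would break the argument). The parity of $m^2 - m_1^2 - 2(i-4)$ forces a case split on the parity of $m$, and in a couple of borderline configurations one has to perturb $d_1$ or $d_2$ by two to avoid $a = 1$ or $b = 1$. Once this combinatorial adjustment is carried out, the bounds $m_1 \le m - 8$, $m \ge 20$, and $k \le 7m$ ensure that the rectangles are large enough for Theorem~\ref{t:strict} to apply, and positivity follows.
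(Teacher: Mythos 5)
Your proposal is correct and matches the paper exactly: the corollary is a verbatim restatement of Proposition~\ref{general statement}, and the paper's proof is likewise a one-line citation of that proposition (together with Corollary~\ref{cor:square-near-hooks}). Your sketch of the underlying two-case argument --- the transposed hook decomposition with $k_1 = 7m-i+1$ for large $k$, and the reduction to $\mu_4(k,m_1)$ with $m_1=\lceil\sqrt{k+8}\,\rceil$ plus the parity adjustments avoiding $a=1$ or $b=1$ for small $k$ --- is precisely the paper's proof of that proposition.
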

\begin{proof}
It follows directly from Corollary \ref{cor:square-near-hooks} and Proposition \ref{general statement}.
\end{proof}
    
\section{Additional Remarks}\label{sec:final}
\subsection{} We have proved the positivity of Kronecker coefficients indexed by pairs of rectangular Young diagrams and certain three-row partitions of special shapes. We could further use the result of square Kronecker coefficients to investigate the behavior of tensor squares of irreducible representations for rectangular Young diagrams and explore the positivity properties for specific families of rectangular partitions. 

\subsection{}Since the decomposition of a rectangular partition can only be achieved by writing it as a horizontal or vertical sum of two rectangular partitions, it limits the application of the semigroup property. For partitions with more rows or larger Durfee size, there are instances where the semigroup property fails to prove positivity. A specific example is the Kronecker coefficient $g(\msquare_m,\msquare_m,((m+1)^{m-1}, 1))$. Due to the partition shapes involved, the only valid method to decompose them to satisfy number-theoretical conditions is as follows:
$\msquare_m = ((m-1)^m )+_H (1^m)= (m^{m-1}) +_V (m)$ and 
$((m+1)^{m-1}, 1) = (m^{m-1})+_H (1^m).$ 
However, $g(1^m,1^m,1^m) = g (m,m,1^m) = 0$, indicating that we cannot rely on this approach to prove positivity. This demonstrates the limitations of the semigroup property in certain cases. 

When $m$ is even, we can establish through a recursive argument that there exists no rim-hook tableau of shape \(((m+1)^{m-1}, 1)\) with type \(\alpha_m\). Note that there is a unique arrangement for both the \((2m-1)\)-hook and the \((2m-3)\)-hook. These two longest rim-hooks invariably occupy the skew-shape \(((m+1)^{m-1}, 1)/((m-1)^{m-3}, 1)\), as depicted in the diagram below. Then the problem is reduced to a search for a rim-hook tableau with shape \(((m-1)^{m-3}, 1)\) and type \(\alpha_{m-2}\). By iterating this process, we know that a rim-hook tableau with shape \(((m+1)^{m-1}, 1)\) and type \(\alpha_m\) exists if and only if a rim-hook tableau with shape \((5, 4)\) and type \((5, 3, 1)\) can be found. Therefore, there does not exist a rim-hook tableau of shape $((m+1)^{m-1}, 1)$ and type $\alpha_m$, which implies that $\chi^{((m+1)^{m-1}, 1)}(\alpha_m) = 0$ by Murnaghan-Nakayama Rule. Hence, the character approach (Theorem \ref{character}) is also not applicable in this case.

\begin{center}
\ydiagram 
[*(white)]{5,5,5,1}
*[ *(white)\bullet]{7,6,6,6}
*[*(gray)]{7,7,7,7,7,1}
\end{center}



\appendix 
\section{Missing partitions in tensor square of square with a small side length}

With the help of computer, we find all partitions $\lambda \vdash m^2$ such that $g(\msquare_m, \msquare_m, \lambda) = 0$ for $m = 4,5,6,7$:
\begin{itemize}
    \item  $g(\msquare_4,\msquare_4,\lambda) = 0$ if and only if $\lambda$ or $\lambda^\prime \in \seqsplit{\{(15,1), (14,1,1),(13,2,1),(12,3,1),(12,1,1,1,1),(11,5),(10,1,1,1,1,1,1),(9,7),(8,7,1),(8,2,1,1,1,1,1,1),(7,7,2),(7,5,4)\}}$;
    \item  $g(\msquare_5,\msquare_5,\lambda) = 0$ if and only if $\lambda$ or $\lambda^\prime \in \seqsplit{ \{(24,1),(23,1,1),(22,2,1),(21,3,1),(21,1,1,1,1),(19,1,1,1,1,1,1),(14, 1, 1, 1, 1, 1, 1, 1, 1, 1, 1, 1)\}}$;
    \item $g(\msquare_6,\msquare_6,\lambda) = 0$ if and only if $\lambda$ or  $\lambda^\prime \in \seqsplit{\{(35, 1),(34, 1, 1),(33, 2, 1), (32, 3, 1), (32, 1, 1, 1, 1), (30, 1, 1, 1, 1, 1, 1),}  {(23, 1^{13}), (19, 17)\}}$;
    \item $g(\msquare_7,\msquare_7,\lambda) = 0$ if and only if $\lambda$ or  $\lambda^\prime \in \seqsplit{\{\{(48, 1),(47, 1, 1),(46, 2, 1), (45, 3, 1), (45, 1, 1, 1, 1), (43, 1, 1, 1, 1, 1, 1)\}}.$
\end{itemize}

\printbibliography

@article{Ikenmeyer_2015,
   title={The Saxl conjecture and the dominance order},
   volume={338},
   ISSN={0012-365X},
   url={http://dx.doi.org/10.1016/j.disc.2015.04.027},
   DOI={10.1016/j.disc.2015.04.027},
   number={11},
   journal={Discrete Mathematics},
   publisher={Elsevier BV},
   author={Ikenmeyer, Christian},
   year={2015},
   month={11},
   pages={1970–1975}
}

@article{Luo_2016,
   title={The Saxl conjecture for fourth powers via the semigroup property},
   volume={45},
   ISSN={1572-9192},
   url={http://dx.doi.org/10.1007/s10801-016-0700-z},
   DOI={10.1007/s10801-016-0700-z},
   number={1},
   journal={Journal of Algebraic Combinatorics},
   publisher={Springer Science and Business Media LLC},
   author={Luo, Sammy and Sellke, Mark},
   year={2016},
   month={8},
   pages={33–80}
}

@article{pak2013kronecker,
title = {Kronecker products, characters, partitions, and the tensor square conjectures},
journal = {Advances in Mathematics},
volume = {288},
pages = {702-731},
year = {2016},
issn = {0001-8708},
doi = {https://doi.org/10.1016/j.aim.2015.11.002},
url = {https://www.sciencedirect.com/science/article/pii/S0001870815004508},
author = {Igor Pak and Greta Panova and Ernesto Vallejo},
keywords = {Kronecker coefficients, Irreducible representations of the Symmetric group, Symmetric group characters, Tensor squares, Integer partitions},
abstract = {We study the remarkable Saxl conjecture which states that tensor squares of certain irreducible representations of the symmetric groups Sn contain all irreducibles as their constituents. Our main result is that for sufficiently large n they contain representations corresponding to Young diagrams of hooks, two row and diagrams obtained from hooks and two rows by adding a finite number of squares. For that, we develop a new sufficient condition for the positivity of Kronecker coefficients in terms of characters, and apply this tool using combinatorics of rim hook tableaux in combination with known results on unimodality of certain partition functions. We also present connections and speculations on random characters of Sn.}
}

@misc{dou2009hive,
      title={A hive model determination of multiplicity-free Schur function products and skew Schur functions}, 
      author={Donna Q. J. Dou and Robert L. Tang and Ronald C. King},
      year={2009},
      eprint={0901.0186},
      archivePrefix={arXiv},
      primaryClass={math.CO}
}

@article{Gutschwager_2010,
   title={On Multiplicity-Free Skew Characters and the Schubert Calculus},
   volume={14},
   ISSN={0219-3094},
   url={http://dx.doi.org/10.1007/s00026-010-0063-4},
   DOI={10.1007/s00026-010-0063-4},
   number={3},
   journal={Annals of Combinatorics},
   publisher={Springer Science and Business Media LLC},
   author={Gutschwager, Christian},
   year={2010},
   month={9},
   pages={339–353}
}

@article{thomas_yong_2010, title={Multiplicity-Free Schubert Calculus}, volume={53}, DOI={10.4153/CMB-2010-032-x}, number={1}, journal={Canadian Mathematical Bulletin}, publisher={Cambridge University Press}, author={Thomas, Hugh and Yong, Alexander}, year={2010}, pages={171–186}}

@article{Pak_2013,
   title={Strict unimodality of q-binomial coefficients},
   volume={351},
   ISSN={1631-073X},
   url={http://dx.doi.org/10.1016/j.crma.2013.06.008},
   DOI={10.1016/j.crma.2013.06.008},
   number={11-12},
   journal={Comptes Rendus Mathematique},
   publisher={Elsevier BV},
   author={Pak, Igor and Panova, Greta},
   year={2013},
   month={6},
   pages={415–418}
}

@article{ikenmeyer2017rectangular,
title = {Rectangular Kronecker coefficients and plethysms in geometric complexity theory},
journal = {Advances in Mathematics},
volume = {319},
pages = {40-66},
year = {2017},
issn = {0001-8708},
doi = {https://doi.org/10.1016/j.aim.2017.08.024},
url = {https://www.sciencedirect.com/science/article/pii/S0001870816304534},
author = {Christian Ikenmeyer and Greta Panova},
keywords = {Kronecker coefficients, Plethysm coefficients, Geometric complexity theory, Positivity},
abstract = {We prove that in the geometric complexity theory program the vanishing of rectangular Kronecker coefficients cannot be used to prove superpolynomial determinantal complexity lower bounds for the permanent polynomial. Moreover, we prove the positivity of rectangular Kronecker coefficients for a large class of partitions where the side lengths of the rectangle are at least quadratic in the length of the partition. We also compare rectangular Kronecker coefficients with their corresponding plethysm coefficients, which leads to a new lower bound for rectangular Kronecker coefficients. Moreover, we prove that the saturation of the rectangular Kronecker semigroup is trivial, we show that the rectangular Kronecker positivity stretching factor is 2 for a long first row, and we completely classify the positivity of rectangular limit Kronecker coefficients that were introduced by Manivel in 2011.}
}

@article{liu2014simplified,
 ISSN = {00029939, 10886826},
 URL = {https://www.jstor.org/stable/90013039},
 author = {Ricky Ini Liu},
 journal = {Proceedings of the American Mathematical Society},
 number = {9},
 pages = {pp. 3657--3664},
 publisher = {American Mathematical Society},
 title = {A simplified Kronecker rule for one hook shape},
 volume = {145},
 year = {2017}
}

@article{LI2021112340,
title = {Saxl Conjecture for triple hooks},
journal = {Discrete Mathematics},
volume = {344},
number = {6},
pages = {112340},
year = {2021},
issn = {0012-365X},
doi = {https://doi.org/10.1016/j.disc.2021.112340},
url = {},
author = {Xin Li},
keywords = {Saxl Conjecture, Dominance order, Kronecker coefficient, Semigroup property, Tensor square},
abstract = {We make some progresses on Saxl Conjecture. Firstly, we show that the probability that a partition is comparable in dominance order to the staircase partition tends to zero as the staircase partition grows. Secondly, for partitions whose Durfee size is k where k≥3, by semigroup property, we show that there exists a number nk such that if the tensor squares of the first nk staircase partitions contain all irreducible representations corresponding to partitions with Durfee size k, then all tensor squares contain partitions with Durfee size k. Especially, we show that n3=14 and n4=28. Furthermore, with the help of computer we show that the Saxl Conjecture is true for all triple hooks (i.e. partitions with Durfee size 3). Similar results for chopped square and caret shapes are also discussed.}
}

@article{Christandl2007,
abstract = {A triple of spectra (rA, rB, rAB) is said to be admissible if there is a density operator $\rho$ABwith},
author = {Christandl, Matthias and Harrow, Aram W and Mitchison, Graeme},
doi = {10.1007/s00220-006-0157-3},
issn = {1432-0916},
journal = {Communications in Mathematical Physics},
number = {3},
pages = {575--585},
title = {{Nonzero Kronecker Coefficients and What They Tell us about Spectra}},
url = {https://doi.org/10.1007/s00220-006-0157-3},
volume = {270},
year = {2007}
}

@article{10.36045/bbms/1103408635,
author = {Jeffrey B. Remmel and Tamsen Whitehead},
title = {{On the Kronecker product of Schur functions of two row shapes}},
volume = {1},
journal = {Bulletin of the Belgian Mathematical Society - Simon Stevin},
number = {5},
publisher = {The Belgian Mathematical Society},
pages = {649 -- 683},
year = {1994},
doi = {10.36045/bbms/1103408635},
URL = {https://doi.org/10.36045/bbms/1103408635}
}

@article{Ikenmeyer_2017,
   title={On vanishing of Kronecker coefficients},
   volume={26},
   ISSN={1420-8954},
   url={http://dx.doi.org/10.1007/s00037-017-0158-y},
   DOI={10.1007/s00037-017-0158-y},
   number={4},
   journal={computational complexity},
   publisher={Springer Science and Business Media LLC},
   author={Ikenmeyer, Christian and Mulmuley, Ketan D. and Walter, Michael},
   year={2017},
   month={7},
   pages={949–992}
}

@article{Ballantine2005OnTK,
  title={On the Kronecker Product s(n-p,p) * s$\lambda$},
  author={Cristina M. Ballantine and Rosa C. Orellana},
  journal={Electron. J. Comb.},
  year={2005},
  volume={12}
}

@article{blasiak2012kronecker,
	author = {Jonah Blasiak},
	date-added = {2023-09-01 15:47:14 -0700},
	date-modified = {2023-09-01 15:50:45 -0700},
	journal = {S{\'e}minaire Lotharingien de Combinatoire},
	month = {9},
	title = {Kronecker coefficients for one hook shape},
	volume = {77},
	year = {2017}}

@article{Remmel1989AFF,
  title={A formula for the Kronecker products of Schur functions of hook shapes},
  author={Jeffrey B. Remmel},
  journal={Journal of Algebra},
  year={1989},
  volume={120},
  pages={100-118}
}

@article{pak2015complexity,
	author = {Pak, Igor and Panova, Greta},
	journal = {computational complexity},
	number = {1},
	pages = {1--36},
	title = {On the complexity of computing Kronecker coefficients},
	volume = {26},
	year = {2017}}

@article{article,
author = {B\"urgisser, Peter and Ikenmeyer, Christian},
year = {2008},
month = {01},
pages = {},
title = {The complexity of computing Kronecker coefficients},
volume = {DMTCS Proceedings vol.},
journal = {FPSAC'08 - 20th International Conference on Formal Power Series and Algebraic Combinatorics},
doi = {10.46298/dmtcs.3622}
}

@book{stanley1997enumerative,
  title={Enumerative Combinatorics: Volume 2},
  author={Stanley, R.P. and Fomin, S.},
  isbn={9780521789875},
  lccn={96044267},
  series={Cambridge Studies in Advanced Mathematics},
  url={https://books.google.com/books?id=2QOtugEACAAJ},
  year={1997},
  publisher={Cambridge University Press}
}

@book{books/daglib/0077285,
  added-at = {2011-04-19T00:00:00.000+0200},
  author = {Sagan, Bruce E.},
  biburl = {https://www.bibsonomy.org/bibtex/27ddc504a809f3d90420fce1077bd841f/dblp},
  interhash = {e3050998312fb9036dad29b95e8189a7},
  intrahash = {7ddc504a809f3d90420fce1077bd841f},
  isbn = {978-0-534-15540-7},
  keywords = {dblp},
  pages = {I-XV, 1-197},
  publisher = {Wadsworth},
  series = {Wadsworth \& Brooks / Cole mathematics series},
  timestamp = {2011-04-29T15:27:00.000+0200},
  title = {The symmetric group - representations, combinatorial algorithms, and symmetric functions.},
  year = 1991
}

@article{HSTZ,
   title={Conjugacy action, induced representations and the Steinberg square for simple groups of Lie type},
   volume={106},
   ISSN={0024-6115},
   url={http://dx.doi.org/10.1112/plms/pds062},
   DOI={10.1112/plms/pds062},
   number={4},
   journal={Proceedings of the London Mathematical Society},
   publisher={Wiley},
   author={Heide, Gerhard and Saxl, Jan and Tiep, Pham Huu and Zalesski, Alexandre E.},
   year={2012},
   month={11},
   pages={908–930}
}

@incollection{stanley2000positivity,
  author    = {Stanley, Richard P.},
  title     = {Positivity problems and conjectures in algebraic combinatorics},
  booktitle = {Mathematics: Frontiers and Perspectives},
  pages     = {295--319},
  publisher = {American Mathematical Society},
  address   = {Providence, RI},
  year      = {2000}
}

@misc{panova2023complexity,
      title={Complexity and asymptotics of structure constants}, 
      author={Greta Panova},
      year={2023},
      eprint={2305.02553},
      archivePrefix={arXiv},
      primaryClass={math.CO}
}

@article{Littlewood1958,
  author = {Dudley E. Littlewood},
  title = {Products and plethysms of characters with orthogonal, symplectic and symmetric groups},
  journal = {Canadian Journal of Mathematics},
  volume = {10},
  year = {1958},
  pages = {17--32},
}

@article{VALLEJO2014243,
title = {A diagrammatic approach to Kronecker squares},
journal = {Journal of Combinatorial Theory, Series A},
volume = {127},
pages = {243-285},
year = {2014},
issn = {0097-3165},
doi = {https://doi.org/10.1016/j.jcta.2014.06.002},
url = {https://www.sciencedirect.com/science/article/pii/S0097316514000880},
author = {Ernesto Vallejo},
keywords = {Kronecker product, Young tableau, Schur function, Kostka number, Littlewood–Richardson rule},
abstract = {In this paper we improve a method of Robinson and Taulbee for computing Kronecker coefficients and show that for any partition ν¯ of d there is a polynomial kν¯ with rational coefficients in variables xC, where C runs over the set of isomorphism classes of connected skew diagrams of size at most d, such that for all partitions λ of n, the Kronecker coefficient g(λ,λ,(n−d,ν¯)) is obtained from kν¯(xC) substituting each xC by the number of partitions α contained in λ such that λ/α is in the class C. Some results of our method extend to arbitrary Kronecker coefficients. We present two applications. The first is a contribution to the Saxl conjecture, which asserts that if ρk=(k,k−1,…,2,1) is the staircase partition, then the Kronecker square χρ⊗χρ contains every irreducible character of the symmetric group as a component. Here we prove that for any partition ν¯ of size d there is a piecewise polynomial function sν¯ in one real variable such that for all k, one has g(ρk,ρk,(|ρk|−d,ν¯))=sν¯(k). The second application is a proof of a new stability property for arbitrary Kronecker coefficients.}
}

@article{pak14unimodality,
	author = {Pak, Igor and Panova, Greta},
	journal = {Journal of Algebraic Combinatorics},
	number = {4},
	pages = {1103--1120},
	title = {Unimodality via Kronecker products},
	volume = {40},
	year = {2014}}
\end{document}